\numberwithin{equation}{section}
\newcommand{\R}{{\mathbb R}}
\newcommand{\e}{\varepsilon}
\newtheorem{theorem}{Theorem}[section]
\newtheorem{corollary}[theorem]{Corollary}
\newtheorem{definition}[theorem]{Definition}
\newtheorem{example}[theorem]{Example}
\newtheorem{lemma}[theorem]{Lemma}
\newtheorem{proposition}[theorem]{Proposition}
\newtheorem{remark}[theorem]{Remark}
\begin{document}

\title{\vskip-0.3in Positive solutions for quasilinear elliptic inequalities \\and systems with nonlocal terms}

\author{Marius Ghergu\footnote{School of Mathematics and Statistics,
    University College Dublin, Belfield, Dublin 4, Ireland; {\tt
      marius.ghergu@ucd.ie}}\;\,\footnote{Institute of Mathematics Simion Stoilow of the Romanian Academy, 21 Calea Grivitei St., 010702 Bucharest, Romania}
      $\;\;$        $\;$
{Paschalis Karageorgis\footnote{School of Mathematics,
    Trinity College Dublin; {\tt
        pete@maths.tcd.ie}}}
 $\;\;$    and    $\;$
{Gurpreet Singh\footnote{School of Mathematics,
    Trinity College Dublin; {\tt
        gurpreet.bajwa2506@gmail.com}}}
}

\maketitle

\begin{abstract}
We investigate the existence and nonexistence of positive solutions for the quasilinear elliptic
inequality $L_\mathcal{A} u= -{\rm div}[\mathcal{A}(x, u, \nabla u)]\geq (I_\alpha\ast u^p)u^q$
in $\Omega$, where $\Omega\subset \R^N, N\geq 1,$ is an open set. Here $I_\alpha$ stands for the Riesz potential of order $\alpha\in (0, N)$,
$p>0$ and $q\in \R$. For a large class of operators $L_\mathcal{A}$ (which includes the
$m$-Laplace and the $m$-mean curvature operator) we obtain optimal ranges of exponents $p,q$ and
$\alpha$ for which positive solutions exist.  Our methods are then extended to quasilinear
elliptic systems of inequalities.
\end{abstract}

\noindent{\bf Keywords:} Quasilinear elliptic inequalities; $m$-Laplace operator;
$m$-mean curvature operator; existence and nonexistence of positive solutions.

\medskip

\noindent{\bf 2010 AMS MSC:} 35J62, 35A23, 35B09, 35B53


\section{Introduction}\label{sec1}
In this paper we are concerned with the following quasilinear elliptic inequality
\begin{equation}\label{go}
L_\mathcal{A} u= -{\rm div}[\mathcal{A}(x, u, \nabla u)]\geq (I_\alpha\ast u^p)u^q \quad\mbox{ in }\Omega,
\end{equation}
and its associated systems. Here  $\alpha\in (0, N)$, $p>0$ and $q\in \R$.

We investigate \eqref{go} for three particular classes of open sets $\Omega\subset \R^N$, $N\geq 1$, namely:
\begin{itemize}
\item $\Omega$ is open and bounded;
\item $\Omega$ is the whole space $\R^N$;
\item $\Omega$ is the exterior of a closed ball in $\R^N$.
\end{itemize}
In the last case, the inequality \eqref{go} will be considered in $\Omega=\R^N\setminus
\overline{B}_1$ but the arguments  we construct in the following are valid if $\Omega$ is the
complement of any smooth and nondegenerate compact set.

The quantity $I_\alpha\ast u^p$ represents the convolution operation $$ (I_\alpha\ast
u^p)(x)=\int_\Omega I_\alpha(x-y)u^p(y) dy\,, $$ where $I_\alpha:\R^N\to \R$ is the {\it Riesz
potential} of order $\alpha\in (0,N)$  given by $$
I_\alpha(x)=\frac{A_\alpha}{|x|^{N-\alpha}}\quad \mbox{ with }\;
A_\alpha=\frac{\Gamma\big(\frac{N-\alpha}{2}\big)}{\Gamma(\frac{\alpha}{2}\big) \pi^{N/2}2^\alpha}=
C(N, \alpha)> 0. $$ Throughout this paper, $\mathcal{A}: \Omega \times [0, \infty)\times \R^N \to
\R^N$ is a Caratheodory function, that is, $\mathcal{A}$ is measurable and $\mathcal{A}(x, u, \cdot
): \R^N \to \R^N$ is continuous for all $(x, u)\in \Omega \times [0, \infty)$.

The operator $L_\mathcal{A}$ is assumed to fulfill one of the  structural conditions below.

\begin{definition}\label{defW}
Let $m> 1$.

\begin{itemize}

\item We say that $\mathcal{A}$ is $W$-$m$-$C$ ({\it weakly-m-coercive}) if
\begin{equation*}
\mathcal{A}(x, u, \eta)\cdot\eta \geq C|\mathcal{A}(x, u, \eta)|^{m'}
\quad\mbox{ for all } (x, u, \eta)\in \Omega \times[0, \infty)\times \R^N,
\end{equation*}
where $C>0$ is a constant and $m'$ is the H\"{o}lder conjugate of $m>1$, that is,
$\frac{1}{m}+\frac{1}{m'}= 1$.

\item  We say that $\mathcal{A}$ is $S$-$m$-$C$ ({\it strongly-m-coercive}) if
\begin{equation*}
\mathcal{A}(x, u, \eta) \cdot \eta \geq C_1|\eta|^m \geq C|\mathcal{A}(x, u, \eta)|^{m'}
\quad\mbox{ for all } (x, u, \eta)\in \Omega \times[0, \infty)\times \R^N,
\end{equation*}
where $C,C_1>0$ are constants.

\item We say that $\mathcal{A}$ is of mean curvature type $(H_m)$ if
$$ \mathcal{A}= \mathcal{A}(\eta): \R^N\to \R^N, \;\;\;\; \mathcal{A}_{i}(\eta)= A(|\eta|)\eta_i,
$$ where $A\in C[0, \infty)\cap C^{1}(0, \infty)$, $t\longmapsto tA(t)$ is nondecreasing and
there exists $M>1$ such that
\begin{equation*}
\left\{
\begin{aligned}
A(t)&\leq Mt^{m-2} &&\quad\mbox{ for all }t> 0,\\
A(t)&\geq M^{-1}t^{m-2} &&\quad\mbox{ for all } 0< t< 1.
\end{aligned}
\right.
\end{equation*}

\end{itemize}
\end{definition}

\begin{example}
Let $\mathcal{A}: \Omega \times [0,\infty)\times \R^N \to \R^N$ be defined by $\mathcal{A}_{i}(x,
u, \eta)= \displaystyle\sum_{j=1}^{N} a_{ij}(x, u, \eta)\eta_i$. Then $\mathcal{A}$ is $W$-$m$-$C$
provided that there exists $C>0$ such that we have $$ \displaystyle\sum_{i, j=1}^{N} a_{ij}(x, u,
\eta)\eta_i \eta_j \geq C\Big[\displaystyle\sum_{i=1}^{N}\Big(\displaystyle\sum_{j=1}^{N} a_{ij}(x,
u, \eta)\eta_j \Big)\Big]^{m'/2} \quad\mbox{ for all } (x, u, \eta)\in  \Omega \times
[0,\infty)\times \R^N. $$

\end{example}

\begin{example}
The standard $m$-Laplace operator $\mathcal{A}(x, u, \eta)= |\eta|^{m-2}\eta$ is $(H_m)$ and
$S$-$m$-$C$ for any $m>1$.
\end{example}

\begin{example}
The $m$-mean curvature operator given by
\begin{equation}\label{mco}
\mathcal{A}(x, u, \eta)= \frac{|\eta|^{m-2}}{\sqrt{1+|\eta|^m}}\eta
\end{equation}
is $W$-$m$-$C$ but not $S$-$m$-$C$. Also, $\mathcal{A}(x, u, \eta)$ is $(H_m)$ provided that $m\geq
2$.
\end{example}

\begin{example}
Assume $\mathcal{A}_i(x, u, \eta)= A(x, u, |\eta|)\eta_i$, where $A: \Omega \times [0,
\infty)\times [0, \infty)\to \R$. Then:
\begin{itemize}
\item $\mathcal{A}$ is $W$-$m$-$C$ if there exists $M>1$ such that
\begin{equation*}
\left\{
\begin{aligned}
0\leq A(x, u, t)&\leq Mt^{m-2} &&\quad\mbox{ for all }t> 0,\\
A(x, u, t)&\geq M^{-1}t^{m-2} &&\quad\mbox{ for all } 0< t< 1.
\end{aligned}
\right.
\end{equation*}
\item  $\mathcal{A}$ is $S$-$m$-$C$ if
\begin{equation}\label{smc}
M^{-1}t^{m-2}\leq A(x, u, t)\leq Mt^{m-2}\quad\mbox{  for all } t> 0,
\end{equation}
for some constant $M>1$.
\end{itemize}
\end{example}

\begin{definition}
We say that $u\in C(\Omega)\cup W_{loc}^{1, 1}(\Omega)$ is a positive  solution of \eqref{go} if
\begin{itemize}
\item $u>0$, $\mathcal{A}(x, u, \nabla u)\in L^{1}_{loc}(\Omega)^{N}$,
$L_{\mathcal{A}}u\in L^{1}_{loc}(\Omega)$, $(I_\alpha*u^p)u^q\in L^{1}_{loc}(\Omega)$.
\item
\begin{equation}\label{c1}
\int_{\Omega}\frac{u^{p}(y)}{1+|y|^{N-\alpha}}dy< \infty.
\end{equation}

\item for any $\phi \in C_{c}^{\infty}(\Omega)$, $\phi\geq 0$ we have
\begin{equation*}
\int_{\Omega}\mathcal{A}(x, u, \nabla u)\cdot \nabla \phi \geq \int_{\Omega}(I_\alpha*u^p)u^q \phi.
\end{equation*}
\end{itemize}
\end{definition}

Condition \eqref{c1} follows from the fact that $I_\alpha*u^p< \infty$ in $\Omega$.

\begin{remark}

(i) If $\mathcal{A}$ is $S$-$m$-$C$ then $L_{\mathcal{A}}$ satisfies the weak Harnack inequality
(see \cite[Theorem 2]{T1967}).

(ii) If $\mathcal{A}$ satisfies $(H_m)$ then the standard maximum principle for $L_{\mathcal{A}}$
holds (see \cite[Remark 2.3]{BP2001}). Thus, whenever $\mathcal{A}$ is $(H_m)$ and the exponent $q$
in \eqref{go} is positive, we may relax the positivity condition on $u$ to $u\geq 0$.
\end{remark}

The study of elliptic inequalities in unbounded domains goes back to early 1980s although elliptic
equations in $\R^N$ have been discussed, for radially symmetric solutions, at least one century ago
by Emden \cite{E1907} and Fowler \cite{F1914,F1920}. In the celebrated paper \cite{GS1981}, Gidas
and Spruck obtained that the semilinear equation $-\Delta u=u^p$ in $\R^N$, $N\geq 3$, has no
$C^2$-solutions for $1\leq p<\frac{N+2}{N-2}$ and that the upper exponent $\frac{N+2}{N-2}$ is
sharp. Instead, if one considers the related inequality  $-\Delta u\geq u^p$ in $\R^N$, $N\geq 3$
then the optimal range for nonexistence changes to $1\leq p<\frac{N}{N-2}$ and this new upper
exponent $\frac{N}{N-2}$ is also sharp (see, e.g., \cite{BCN1994,MP2001}). Since then, such results have
been extended to many differential operators. 

For instance, the authors in \cite{CDM2008} (see also \cite{DM2010}) discuss Liouville type results for 
$$
 -{\rm div}[\mathcal{A}(x,u, \nabla u)]\geq f(u) \quad\mbox{ in }\Omega.
 $$
The approach in \cite{CDM2008} relies essentially on representation formulae for linear inequalities, nonlinear capacity methods and the weak form of Harnack's inequality.
In \cite{BM1998} the authors use a blow-up argument to derive a priori bounds for solutions and thus to obtain Liouville  type results for inequalities and their corresponding systems.
Other types of problems may be found in \cite{BFP2015,KLM2005,KLZ2003,
LLM2007, MP2001}. 

A systematic study of the inequality
$$
 L_\mathcal{A} u= -{\rm div}[\mathcal{A}(x,u, \nabla u)]\geq |x|^{\sigma}u^q \quad\mbox{ in }\Omega,
$$
along with the corresponding system $$ \left\{
\begin{aligned}
&L_\mathcal{A} u= -{\rm div}[\mathcal{A}(x, u, \nabla u)]\geq |x|^au^pv^q\\
&L_\mathcal{B} v= -{\rm div}[\mathcal{B}(x, v, \nabla v)]\geq |x|^b u^rv^s
\end{aligned}
\right.\qquad\mbox{ in } \Omega,
$$
is carried out in \cite{BP2001} for various domains $\Omega\subset \R^N$, such as open balls and their
complements, half balls and half spaces.

The quasilinear elliptic inequality 
${\rm div}(A(|\nabla u|)\nabla u )\geq f(u)$
is discussed in \cite{PRS2007, PSZ1999} in connection with the strong maximum principle and the compact support principle. 
More recently, quasilinear elliptic inequalities and systems integrate the gradient term in the nonlinearity: the authors in \cite{FPR2010} discuss the quasilinear coercive inequality
$$
{\rm div}(g(x)|\nabla u|^{p-2}\nabla u)\geq h(x)f(u)\ell(|\nabla u|)\quad\mbox{ in }\R^N.
$$
Systems of quasilinear elliptic inequalities of type
$$
\left\{
\begin{aligned}
& -{\rm div}(h_1(x)A(|\nabla u|)\nabla u)\geq f(x,u,v,\nabla u, \nabla v)\\
& -{\rm div}(h_2(x)B(|\nabla v|)\nabla v)\geq g(x,u,v,\nabla u, \nabla v)
\end{aligned}
\right.\qquad\mbox{ in } \R^N
$$
and
$$
\left\{
\begin{aligned}
& -{\rm div}[\mathcal{A}(x, u, \nabla u)]\geq a(x) u^{p_1}v^{q_1}|\nabla u|^{\theta_1}\\
& -{\rm div}[\mathcal{B}(x, v, \nabla v)]\geq b(x) u^{p_2}v^{q_2}|\nabla u|^{\theta_2}
\end{aligned}
\right.\qquad\mbox{ in } \R^N
$$
are considered in \cite{F2011} and \cite{F2013} respectively.

To the best of our knowledge, the first results dealing with quasilinear elliptic inequalities featuring nonlocal terms appear in \cite{CMP2008}. The authors in \cite{CMP2008} obtain 
local estimates and Liouville type results for 
$$
 -{\rm div}[\mathcal{A}(x,u, \nabla u)]\geq K\ast u^q \quad\mbox{ in }\R^N,
 $$
where ${\mathcal A}$ is $S$-$m$-$C$, $K\in L^{1}_{loc}(\R^N)$, $K\geq 0$ and $q>0$.

Our study of \eqref{go} and its associated systems is motivated by \cite{MV2013} where the authors
consider the semilinear elliptic inequality
\begin{equation}\label{mv}
-\Delta u+\frac{\lambda}{|x|^\gamma}u\geq (I_\alpha\ast u^p)u^q\quad\mbox{ in }\R^N\setminus\overline{B}_1.
\end{equation}
Related inequalities or equations are discussed in \cite{CZ2016,CZ2018,GT2016}. The case of
equality in \eqref{mv} is motivated by the so-called Choquard (or Choquard-Pekar) equation in quantum physics; the reader may consult \cite{MV2017} for a mathematical account on this topic.

In this paper we are concerned with inequality \eqref{go} which is the quasilinear version of
\eqref{mv} in the case $\lambda=0$. The influence of the singular term
$\frac{\lambda}{|x|^\gamma}u$ to \eqref{go} will be discussed in a forthcoming work \cite{GKS}. In
our current study of \eqref{go} we recover the global picture of existence and nonexistence of
solutions obtained in the semilinear case in \cite[Theorem 1]{MV2013}. Specifically, if
$\mathcal{A}$ is $(H_m)$ with $m>1$, we are able to find optimal conditions for $\alpha\in (0,N)$,
$p>0$ and $q\in \R$ such that positive solutions to \eqref{go} exist in exterior domains. One
relevant ingredient in the approach of \eqref{mv} is the nonlocal version of the
Agmon-Allegretto-Piepenbrink positivity principle (see \cite[Proposition 3.2]{MV2013}) whose proof
uses essentially the linear character of the differential operator. This tool does not seem to be
available for \eqref{go} and we shall rely instead on a priori estimates devised in \cite{BP2001}
for quasilinear equations with local terms (see Proposition \ref{p1} below).   Next, we turn to the
study of \eqref{go} in bounded open sets. In such a setting we obtain that if $\mathcal{A}$ is
essentially $(H_m)$, $p>0$ and $p+q\neq m-1$, then \eqref{go} has a positive radial solution.
Finally, we investigate the existence and nonexistence of positive solutions to some systems driven
by \eqref{go}.

The paper is organized as follows. Section 2 contains the main results of this work. In Section 3
we collect some preliminary facts which will be used in our study of \eqref{go}. Sections 4-9
contain the proofs of our main results.

Throughout this paper by $c, C, C_0, C_1, C_2,...$ we denote positive generic constants whose values may vary on each occasion. Also, all integrals are computed in the Riemann sense even if we omit the $dx$ or $dy$ symbol.
\medskip

\section{Main Results}

\subsection{Nonexistence of solutions}
Our first nonexistence result concerns the general case where $\mathcal{A}$ is $W$-$m$-$C$.
\begin{theorem}\label{thm2}
Let $\Omega= \R^N\setminus \overline B_1$. Assume $\mathcal{A}$ is $W$-$m$-$C$, $N> m> 1$, and one
of the following condition holds.
\begin{itemize}
\item[{\rm (i)} ] $p+q> m-1$, $q\leq m-1$ and $q< m-1-\frac{N-\alpha-m}{N}p$.
\item[{\rm (ii)} ] $p+q> m-1$, $q< m-1$ and $q=  m-1-\frac{N-\alpha-m}{N}p$.
\item[{\rm (iii)} ] $p+q\leq m-1$.
\end{itemize}
Then \eqref{go} has no positive solutions.
\end{theorem}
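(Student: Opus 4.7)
The plan is to argue by contradiction, combining an integral test-function estimate of Bidaut-Veron--Pohozhaev type (Proposition \ref{p1} below, adapted from \cite{BP2001}) with a pointwise lower bound on the Riesz convolution that reduces the nonlocal inequality to a local one with a singular weight.

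For the first ingredient, fix a smooth cutoff $\psi \in C_c^\infty(\R^N)$ with $0 \le \psi \le 1$, $\psi \equiv 1$ on $B_2 \setminus B_1$ and $\mathrm{supp}\,\psi \subset B_4 \setminus \overline{B}_{1/2}$, and set $\psi_R(x) = \psi(x/R)$ for $R \ge 4$. Testing the weak formulation against $\phi = \psi_R^\lambda$ with $\lambda$ sufficiently large, and then combining the $W$-$m$-$C$ bound $|\mathcal{A}|^{m'} \le C\,\mathcal{A}\cdot\nabla u$ with two applications of H\"{o}lder's inequality (plus one reabsorption of the convolution factor on the right) should produce an estimate of the schematic form
$$\int_\Omega (I_\alpha \ast u^p)\,u^q\,\psi_R^\lambda \;\le\; C\,R^{-m}\int_{A_R} (I_\alpha \ast u^p)\,u^q,$$
where $A_R = \{R < |x| < 4R\}$. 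For the second ingredient, since $u$ is continuous and strictly positive on the fixed annulus $B_2\setminus B_1$ one has $u \ge c_1 > 0$ there, and hence for every $x \in \Omega$,
$$(I_\alpha \ast u^p)(x) \;\ge\; c\int_{B_2\setminus B_1}\frac{u^p(y)}{|x-y|^{N-\alpha}}\,dy \;\ge\; c_0\,(1+|x|)^{-(N-\alpha)}.$$

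In case (i), the hypothesis $q\le m-1$ together with the strict subcritical bound $q < m-1-\frac{N-\alpha-m}{N}p$ allow the two ingredients above to be combined via one further H\"{o}lder step, forcing the left-hand side to tend to zero as $R\to\infty$ and thus contradicting $u > 0$. Case (ii) lies on the critical hypersurface: the plain scaling yields only a bounded (not vanishing) right-hand side, so the cutoff $\psi_R$ must be refined (for instance by a logarithmic cutoff of the form $\min\{1,\log(R/|x|)\}^\lambda$) in order to extract the missing decay from the strict inequality $q < m-1$. For case (iii), $p+q \le m-1$ together with $p>0$ implies $q \le m-1$; using the convolution lower bound, the inequality reduces to $L_{\mathcal A} u \ge c\,|x|^{-(N-\alpha)} u^q$, which is a local quasilinear inequality falling within the nonexistence range covered by Proposition \ref{p1}.

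The main obstacle is expected to be case (ii): along the critical hyperplane $q = m-1-\frac{N-\alpha-m}{N}p$ the plain test-function argument cannot close, and one has to track the precise H\"{o}lder exponents through the logarithmic (or iterative) refinement in order to recover a vanishing quantity on the right. Cases (i) and (iii) should follow more directly, essentially by plugging the convolution lower bound into the integral estimate and invoking the a priori nonexistence results of \cite{BP2001}.
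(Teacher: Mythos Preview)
There is a genuine gap in the overall strategy. Your plan is to reduce the nonlocal problem to the local inequality $L_{\mathcal A}u\ge c|x|^{-(N-\alpha)}u^q$ via the fixed pointwise bound $(I_\alpha\ast u^p)(x)\ge c_0(1+|x|)^{-(N-\alpha)}$, and then invoke a priori nonexistence results for local inequalities. This fails for two reasons. First, the nonexistence theorems of \cite{BP2001} for $L_{\mathcal A}u\ge|x|^\sigma u^q$ (Proposition~\ref{p3} here) require $\mathcal A$ to be $(H_m)$ or $S$-$m$-$C$; under the mere $W$-$m$-$C$ hypothesis of Theorem~\ref{thm2} no such Liouville result is available, so case (iii) in particular cannot be closed this way. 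Second, the fixed lower bound $c_0|x|^{\alpha-N}$ discards the information contained in $\int_{|y|<R}u^p$, and that information is exactly what drives the contradiction in all three cases.

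The paper's argument is organized around a single \emph{product} estimate (Lemma~\ref{l1}): taking $\theta=m-1$ in Proposition~\ref{p1} and using the annular lower bound $(I_\alpha\ast u^p)(x)\ge CR^{\alpha-N}\int_{B_{2R}\setminus B_1}u^p$ for $x\in B_{2R}\setminus B_R$ gives
\[
\Big(\int_{B_{2R}\setminus B_1}u^p\Big)\Big(\int_{B_{2R}\setminus B_R}u^{q-m+1}\Big)\le CR^{2N-m-\alpha}.
\]
All three cases are then pure H\"older manipulations of this inequality. In (i) with $q<m-1$ one splits $\int_{B_{2R}\setminus B_R}1$ by H\"older between $u^p$ and $u^{q-m+1}$ to obtain both an upper and a lower bound on $\int_{B_{2R}\setminus B_R}u^{q-m+1}$ in powers of $R$, which are incompatible precisely when $q<m-1-\frac{N-\alpha-m}{N}p$. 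Case (ii) needs no logarithmic cutoff: the same H\"older argument at the critical exponent yields $\int_{B_{2R}\setminus B_R}u^p\ge C$, hence $\int_{B_{2R}\setminus B_1}u^p\to\infty$, and feeding this divergence back into the product estimate forces $\int_{B_{2R}\setminus B_R}u^{q-m+1}=o(R^{2N-m-\alpha})$, contradicting the lower bound just established. Case (iii) combines the product estimate with either the Cauchy--Schwarz bound $(\int 1)^2\le(\int u^p)(\int u^{-p})$ (when $p+q=m-1$) or the finiteness condition \eqref{c1} (when $p+q<m-1$). Your ``schematic'' inequality $\int(I_\alpha\ast u^p)u^q\psi_R^\lambda\le CR^{-m}\int_{A_R}(I_\alpha\ast u^p)u^q$ does not arise from the $W$-$m$-$C$ structure and is not the form the argument takes.
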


In case $\mathcal{A}$ is $(H_m)$ we may obtain further nonexistence results.
\begin{theorem}\label{thm1}
Let $\Omega= \R^N\setminus \overline B_1$. Assume $\mathcal{A}$ is $(H_m)$ for some $m>1$.
\begin{enumerate}
\item[\rm (i)] If $m\geq N$ then \eqref{go} has no positive solutions.
\item[\rm (ii)] If $N>m> 1$ and one of the following conditions hold:
\begin{enumerate}
\item[\rm (ii1)] $0< p\leq \frac{\alpha(m-1)}{N-m}$;
\item[\rm (ii2)] $m-1< q\leq \frac{\alpha(m-1)}{N-m}$ and $\, \alpha> N-m$;
\item[\rm (ii3)] $m-1\leq p+q\leq \frac{(N+\alpha)(m-1)}{N-m}$;
\end{enumerate}
then \eqref{go} has no positive solutions.
\end{enumerate}
\end{theorem}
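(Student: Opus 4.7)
The plan is to combine two ingredients throughout: a radial lower bound on any purported positive solution $u$ obtained from the $a$ $priori$ spherical-average estimates of Proposition \ref{p1} (valid since $\mathcal{A}$ being $(H_m)$ implies both the weak Harnack inequality and the maximum principle, by Remark 1.7), together with pointwise lower bounds for the Riesz convolution $I_\alpha\ast u^p$ deduced from this radial information. The structural improvement over Theorem \ref{thm2} comes from the fact that $(H_m)$ gives the two-sided control $A(t)\asymp t^{m-2}$ near the origin, which in turn allows one to invert the relation $\mathcal{A}(x,u,\nabla u)\cdot\nabla u\geq C|\mathcal{A}|^{m'}$ to extract actual gradient information, not merely flux information.

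For part (i), with $m\geq N$, I would argue directly: a positive $L_\mathcal{A}$-supersolution on $\R^N\setminus\overline{B}_1$ must dominate a nontrivial radial $m$-harmonic profile, but when $m\geq N$ such radial profiles are bounded (or grow only logarithmically), so $u$ is bounded below by a positive constant $c_0$ on every exterior annulus $\{|x|>R\}$. Plugging this into the convolution yields $(I_\alpha\ast u^p)(x)\geq c_0^p\, I_\alpha\ast \chi_{B_R(x)}\geq c|x|^\alpha$ when $|x|$ is large, so that the right-hand side of \eqref{go} grows faster than any $m$-superharmonic function can decay; the resulting ODE-inequality for $\bar{u}(r)$ is incompatible with its finiteness, regardless of the sign of $p+q-(m-1)$.

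For part (ii), with $1<m<N$, I would bootstrap. The baseline estimate from Proposition \ref{p1} gives $\bar u(r)\geq c\, r^{-(N-m)/(m-1)}$, whence
\[
(I_\alpha\ast u^p)(x)\geq c|x|^{\alpha-\frac{p(N-m)}{m-1}}
\]
in the regime where the defining integral converges. Reinserting into \eqref{go} and applying Proposition \ref{p1} again improves the lower bound on $\bar u$; the three subcases (ii1)--(ii3) correspond to which of $p$, $q$, or $p+q$ drives the iteration, and the endpoints $\alpha(m-1)/(N-m)$ and $(N+\alpha)(m-1)/(N-m)$ arise as the fixed points of the associated exponent map. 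In each case the bootstrap terminates by forcing the integrability condition \eqref{c1} to fail.

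The main obstacle I expect is twofold: first, making the iteration in (ii) strictly improving and showing that the boundary equalities in (ii1)--(ii3) can still be reached, which typically requires a logarithmic refinement or a lifting-by-$\varepsilon$ trick at the critical exponent; second, verifying that the convolution lower bound survives through the sub-critical integrability thresholds where $p(N-m)/(m-1)-\alpha$ crosses $N$, which splits into two distinct computations for $I_\alpha\ast u^p$ and needs to be patched together consistently with the hypothesis $\alpha>N-m$ in (ii2).
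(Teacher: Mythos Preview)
Your overall plan---derive a pointwise lower bound on $u$ from the $(H_m)$ structure, feed it into the Riesz convolution, and force a contradiction with integrability---is the right framework, and this is essentially what the paper does in parts (i) and (ii1). A few corrections and one real divergence from the paper's argument are worth noting.

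First, two small points. The radial lower bound $u(x)\geq c|x|^{-(N-m)/(m-1)}$ (resp.\ $u\geq c$ when $m\geq N$) comes from Proposition~\ref{p2}, not Proposition~\ref{p1}; the latter is an integral flux estimate and does not by itself give pointwise information. Also, $(H_m)$ alone does not imply $S$-$m$-$C$ (the $m$-mean curvature operator is a counterexample), so the weak Harnack inequality is \emph{not} available in general; fortunately the proof does not need it, only the comparison encoded in Proposition~\ref{p2}.

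Second, the paper does not iterate. In part (i) it simply observes that $u\geq c$ makes $\int\frac{u^p}{1+|y|^{N-\alpha}}\,dy$ diverge, contradicting \eqref{c1} immediately; no ODE argument is needed. In (ii1) the lower bound $u\geq c|x|^{-(N-m)/(m-1)}$ with $p\leq\frac{\alpha(m-1)}{N-m}$ makes $(I_\alpha\ast u^p)(x)=\infty$ at every point (Lemma~\ref{lbas}(ii)), a one-step contradiction. In (ii3) the subcritical range is handled by combining the integral estimate of Lemma~\ref{l1},
\[
\Big(\int_{B_{2R}\setminus B_R}u^p\Big)\Big(\int_{B_{2R}\setminus B_R}u^{q-m+1}\Big)\leq CR^{2N-m-\alpha},
\]
with H\"older's inequality and the pointwise lower bound on $u$; there is no improvement loop. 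Only at the critical endpoint $p+q=\frac{(N+\alpha)(m-1)}{N-m}$ does the paper perform a single ``bootstrap'' step, using Proposition~\ref{p2.1} to upgrade $u$ by a logarithmic factor---the refinement you anticipate.

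The one genuine gap in your plan is (ii2). Your scheme feeds the lower bound on $u$ into $I_\alpha\ast u^p$, producing $|x|^{\alpha-p(N-m)/(m-1)}$, after which the right-hand side of \eqref{go} involves the combination $p+q$; it is not clear how iterating this isolates a condition on $q$ alone. The paper's device is different: it uses only the \emph{trivial} bound $(I_\alpha\ast u^p)(x)\geq C|x|^{\alpha-N}$ from Lemma~\ref{lbas}(i), which holds regardless of the decay of $u$, so that \eqref{go} reduces to the local inequality $L_{\mathcal A}u\geq C|x|^{\alpha-N}u^q$. It then invokes the known Liouville result of Proposition~\ref{p3}(i) with $\sigma=\alpha-N$. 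This is precisely what produces the clean threshold $q\leq\frac{\alpha(m-1)}{N-m}$ independent of $p$.
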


\subsection{Existence of solutions in $\mathbb R^N$}
In the following we assume that $\mathcal{A}$ has the form
\begin{equation}\label{AA}
\mathcal A(x,u, \nabla u) = A(|\nabla u|) \nabla u
\end{equation}
for some function $A\in C[0,\infty)\cap C^1(0,\infty)$ that satisfies
\begin{equation} \label{con1}
\text{$A(t) \geq Ct^{m-2}$ for small enough $t>0$,}
\end{equation}
and
\begin{equation} \label{upper}
\frac{t A'(t)}{A(t)} \leq m-2 \quad\text{for small enough $t>0$,}
\end{equation}
where $C>0$ and $m>1$.

Clearly, for any $m>1$ the $m$-Laplace operator and the $m$-mean curvature operator as given in
\eqref{mco} satisfy \eqref{AA}-\eqref{upper}. More generally, if $A(t) = t^{m-2} f(t)$, where $f$
is continuously differentiable at $t=0$ and such that $f(0)>0\geq f'(t)$ for small enough $t>0$
then \eqref{con1}-\eqref{upper} are fulfilled. Indeed, one can easily check that
\begin{equation*}
\frac{t A'(t)}{A(t)} = m-2 + \frac{tf'(t)}{f(t)} \leq m-2 \quad\text{for small enough $t>0$.}
\end{equation*}

\begin{theorem}\label{thm3}
Assume \eqref{AA}-\eqref{upper}, $N>m>1$, and one of the following conditions hold.
\begin{itemize}
\item[\rm (i)]
$p > \frac{N(m-1)}{N-m}$, $q= m-1$ and $\alpha = N-m$.

\item[\rm (ii)]
$p \geq \frac{N(m-1)}{N-m}$, $q> m-1 -p \frac{N-\alpha - m}{N} $.

\end{itemize}
Then, \eqref{go} has a bounded radial solution in $\Omega= \mathbb R^N$.
\end{theorem}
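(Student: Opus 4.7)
My plan is to construct a positive bounded radial solution of the form $u(r) = C_0 (1+r)^{-\sigma}$, where $C_0 > 0$ is small and $\sigma > 0$ is chosen to match the scaling of the two sides. Since $|\nabla u| \leq C_0\sigma$ can be made arbitrarily small by shrinking $C_0$, condition \eqref{upper} (which makes $t \mapsto t^{2-m} A(t)$ non-increasing near $t=0$), combined with the pointwise lower bound \eqref{con1}, forces $A(|\nabla u|) \asymp |\nabla u|^{m-2}$ uniformly on $\R^N$. In particular $L_{\mathcal{A}} u$ behaves, up to constants determined by $A$, like the $m$-Laplacian of $u$, and the problem reduces to comparing two explicit power-type expressions.

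The first step is to write the radial operator as
\[
L_{\mathcal{A}} u = -\bigl[A(|u'|) + |u'| A'(|u'|)\bigr] u'' + \frac{N-1}{r}\, A(|u'|)\, |u'|,
\]
substitute the trial function, and verify that whenever $\sigma < (N-m)/(m-1)$ and $C_0$ is small,
\[
L_{\mathcal{A}} u \;\geq\; c_1 C_0^{m-1}\bigl[(N-m)-(m-1)\sigma\bigr] (1+r)^{-(m-1)\sigma - m} \qquad\text{for all } r>0,
\]
with the positivity following from $(N-1) - (\sigma+1)(m-1) > 0$. Next, for $\sigma$ satisfying $p\sigma > N$ we have $u^p \in L^1(\R^N)$, and a classical estimate for the Riesz potential yields $(I_\alpha \ast u^p)(x) \leq c_2 C_0^p (1+|x|)^{\alpha - N}$; multiplying by $u^q$ gives
\[
(I_\alpha \ast u^p)\, u^q \;\leq\; c_3 C_0^{p+q} (1+|x|)^{\alpha - N - q\sigma}.
\]

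The last step is to align the two exponents. In case~(i), the identities $q=m-1$ and $\alpha=N-m$ produce $\alpha - N - q\sigma = -m - (m-1)\sigma$, an exact match, and the interval $N/p < \sigma < (N-m)/(m-1)$ is nonempty precisely when $p > N(m-1)/(N-m)$. In case~(ii), the hypothesis $q > m-1 - p(N-\alpha-m)/N$ makes $\alpha - N - q\sigma$ strictly smaller than $-m-(m-1)\sigma$ for $\sigma$ chosen slightly above $N/p$, so the right-hand side decays strictly faster than the left-hand side at infinity. In either setting, the pointwise comparison reduces to $c_1 C_0^{m-1} \geq c_3 C_0^{p+q}$, which is achievable for sufficiently small $C_0$ since $p + q > m-1$. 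The principal obstacle I anticipate is the endpoint $p = N(m-1)/(N-m)$ in case~(ii), where $\sigma$ saturates the admissible interval and the natural choice forces $p\sigma = N$: this introduces a logarithmic factor in the Riesz estimate, which must be absorbed using the strict gap provided by the hypothesis on $q$.
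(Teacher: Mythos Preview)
Your approach matches the paper's for case~(i): same ansatz $u=C_0(1+r)^{-\sigma}$, same lower bound for $L_{\mathcal A}u$ via \eqref{upper} and \eqref{con1}, same choice of $\sigma\in(N/p,(N-m)/(m-1))$, and the same Riesz estimate $(I_\alpha\ast u^p)\lesssim(1+r)^{\alpha-N}$ coming from $p\sigma>N$.

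For case~(ii) your argument also works when $p>\frac{N(m-1)}{N-m}$ is strict, but there is a genuine gap at the endpoint $p=\frac{N(m-1)}{N-m}$, and you have misidentified the obstacle. At this endpoint $N/p=(N-m)/(m-1)$, so the value $\sigma=N/p$ that you propose is exactly the critical exponent at which the coefficient $(N-m)-(m-1)\sigma$ in your lower bound for $L_{\mathcal A}u$ vanishes. The problem is therefore not the logarithm in the Riesz potential; it is that your operator estimate degenerates to $L_{\mathcal A}u\geq 0$, which is useless. No ``strict gap in $q$'' can repair this, because there is nothing positive on the left to compare with.

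The remedy, and this is what the paper does throughout case~(ii), is to choose $\sigma$ on the \emph{other} side of $N/p$: take
\[
\max\Big\{\frac{\alpha+m}{p+q-m+1},\,\frac{\alpha}{p}\Big\}<\sigma<\frac{N}{p}\leq\frac{N-m}{m-1}.
\]
Then $\sigma$ is strictly below $(N-m)/(m-1)$, so your lower bound for $L_{\mathcal A}u$ survives, while $\alpha<p\sigma<N$ gives the sharper Riesz estimate $(I_\alpha\ast u^p)\lesssim(1+r)^{\alpha-p\sigma}$ and hence $(I_\alpha\ast u^p)u^q\lesssim(1+r)^{\alpha-(p+q)\sigma}$. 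The exponent comparison becomes $(p+q-m+1)\sigma>\alpha+m$, and the displayed interval is nonempty precisely because $q>m-1-\frac{p(N-\alpha-m)}{N}$. This handles the endpoint cleanly without any logarithmic correction.
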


\begin{theorem}\label{thm4}
Assume \eqref{AA}-\eqref{upper} and $N>m>1$.   If
\begin{equation} \label{three}
p> \frac{\alpha(m-1)}{N-m}, \qquad q> \frac{\alpha(m-1)}{N-m} \quad\mbox{ and }\quad
p+q > \frac{(\alpha +N)(m-1)}{N-m},
\end{equation}
then, \eqref{go} has a bounded radial solution in $\Omega= \mathbb R^N$.
\end{theorem}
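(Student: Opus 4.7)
The plan is to construct an explicit positive radial supersolution of \eqref{go} on $\R^N$; since \eqref{go} is formulated as an inequality, any such supersolution is a solution in the required sense. I use the bounded, Lipschitz ansatz
$$u(x) = \varepsilon(1+|x|)^{-\beta}, \qquad x \in \R^N,$$
with $\varepsilon > 0$ small and $\beta \in (0, (N-m)/(m-1))$ to be chosen. The virtue of this choice is that $|\nabla u(x)| = \varepsilon\beta(1+|x|)^{-\beta-1}$ is uniformly small on $\R^N$ once $\varepsilon$ is small, so the structural hypotheses \eqref{con1}-\eqref{upper} apply to every value attained by $|\nabla u|$.

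Writing $L_\mathcal{A} u$ in its radial form $(A(|u'|)|u'|)' + \frac{N-1}{r}A(|u'|)|u'|$, using \eqref{con1} to bound $A(|u'|)|u'|$ from below by a multiple of $|u'|^{m-1}$, and using \eqref{upper} to control $(A(|u'|)|u'|)'$, one obtains
$$L_\mathcal{A} u(x) \;\geq\; c\,\varepsilon^{m-1}(1+|x|)^{-\beta(m-1) - m}, \qquad x \neq 0,$$
with $c = c(\beta, m, N) > 0$ by virtue of the strict inequality $\beta < (N-m)/(m-1)$. Splitting the convolution $(I_\alpha\ast u^p)(x)$ at $|y| = |x|/2$ and using $\beta p > \alpha$ gives
$$(I_\alpha\ast u^p)(x) \;\leq\; C\varepsilon^p(1+|x|)^{\alpha - \min\{\beta p,\,N\}},$$
so that multiplying by $u^q$ yields $(I_\alpha\ast u^p)(x)\,u^q(x) \leq C\varepsilon^{p+q}(1+|x|)^{\alpha - \beta q - \min\{\beta p,\,N\}}$.

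Matching the pointwise decay rates thus requires $\beta(q-m+1) + \min\{\beta p, N\} \geq \alpha + m$, together with the basic constraints $\beta > \alpha/p$ and $\beta < (N-m)/(m-1)$. A case analysis shows that \eqref{three} always permits such a $\beta$: if $p \leq N(m-1)/(N-m)$, one picks $\beta$ with $\beta p \leq N$, so the matching reduces to $\beta(p+q-m+1) \geq \alpha + m$, which is compatible with $\beta < (N-m)/(m-1)$ precisely via the third inequality $p+q > (\alpha+N)(m-1)/(N-m)$; if instead $p > N(m-1)/(N-m)$, one picks $\beta$ with $\beta p > N$, whereupon the matching reduces to $\beta(q-m+1) \geq \alpha+m-N$, compatible with $\beta < (N-m)/(m-1)$ via the second inequality $q > \alpha(m-1)/(N-m)$. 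The lower bound $\beta > \alpha/p$ is admissible thanks to the first inequality in \eqref{three}. Since $p+q > m-1$ is automatic (because $(\alpha+N)(m-1)/(N-m) > m-1$), choosing $\varepsilon > 0$ small enough yields $C\varepsilon^{p+q} \leq c\,\varepsilon^{m-1}$, and thus $L_\mathcal{A} u \geq (I_\alpha\ast u^p)u^q$ pointwise on $\R^N\setminus\{0\}$. Since $\mathcal{A}(x,u,\nabla u)$ is bounded near the origin, this extends to the distributional sense required by the definition of solution.

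The main difficulty is the selection of $\beta$: the matching inequality takes two genuinely different forms depending on whether $\beta p \lessgtr N$, and this dichotomy is precisely what forces both of the ``symmetric'' conditions $q > \alpha(m-1)/(N-m)$ and $p+q > (\alpha+N)(m-1)/(N-m)$ to appear in \eqref{three}; each one secures admissibility in one of the two regimes, and together they guarantee that at least one regime is viable for any $(p,q)$ satisfying \eqref{three}.
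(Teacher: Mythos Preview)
Your argument is correct, and it takes a genuinely different route from the paper's proof. The paper constructs
\[
u(r)=\varepsilon(1+r)^{-\gamma}\Bigl(1-\tfrac{k}{1+\log(1+r)}\Bigr),\qquad \gamma=\tfrac{N-m}{m-1},
\]
i.e.\ it works \emph{at} the critical decay rate and therefore needs the logarithmic perturbation to make $L_{\mathcal A}u$ strictly positive; this forces rather delicate sign computations for $u'$, $u''$ and for the bracket in \eqref{iden}. You instead take $\beta$ \emph{strictly} below $\tfrac{N-m}{m-1}$, which is exactly the ansatz of the paper's Theorem~\ref{thm3}, and then observe that the three conditions in \eqref{three} are precisely what is needed to fit the required $\beta$ into the admissible window, according to whether $\beta p\lessgtr N$. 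This is more elementary---no log correction, no second-derivative sign analysis---at the cost of a short case split on $p$. What the paper's choice buys is uniformity: a single profile works for all $(p,q)$ in \eqref{three}, and the resulting estimate \eqref{est5} has the sharp decay $|x|^{-N}$ (up to logs), which is convenient when the same construction is recycled for the systems in Theorem~\ref{thm11}.

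Two minor points worth tightening. First, in your Case~2 with $q<m-1$ you should note explicitly that the upper bound $\beta\le (N-m-\alpha)/(m-1-q)$ is compatible with the lower bound $\beta>N/p$; this reduces to $q>m-1-\tfrac{N-m-\alpha}{N}\,p$, which in turn follows from $q>\tfrac{\alpha(m-1)}{N-m}$ together with the Case~2 hypothesis $p>\tfrac{N(m-1)}{N-m}$. Second, your ansatz is not $C^1$ at the origin (since $u'(0)\neq 0$), so $L_{\mathcal A}u\sim C/r$ there; your closing remark handles this, but you should say that the boundary term on $\{|x|=\epsilon\}$ vanishes because $N\ge 2$ and $\mathcal A(x,u,\nabla u)$ is bounded, while $L_{\mathcal A}u\in L^1_{\mathrm{loc}}$ since $|x|^{-1}$ is locally integrable in dimension $N\ge 2$.
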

From the above results we have a complete picture for all exponents $p>0$, $q\in \R$ in case ${\mathcal A}$ is $(H_m)$ and satisfies \eqref{upper}. 
\begin{corollary}\label{corh1}
Assume $\mathcal{A}$ is $(H_m)$ and satisfies \eqref{upper} for some $m>1$ and let $p>0$, $q\in \R$ and $\alpha\in (0,N)$.
\begin{enumerate}
\item[\rm (i)] If $m\geq N$ then \eqref{go} has no positive solutions.
\item[\rm (ii)] Assume $N>m>1$. 
Then, the following statements are equivalent:
\begin{enumerate}
\item[\rm (ii1)] Inequality \eqref{go} has a positive solution in $\R^N$.
\item[\rm (ii2)] Inequality \eqref{go} has a positive solution in some exterior domain.
\item[\rm (ii3)] The following conditions  hold:
\end{enumerate}
$$
\begin{aligned}
&p>\frac{\alpha(m-1)}{N-m}\,, \; p+q> \frac{(N+\alpha)(m-1)}{N-m} \; \mbox{ and }\; q>
m-1-\frac{N-\alpha-m}{N}p &&\mbox{ if } 0<\alpha<N-m,\\
& p>m-1\,,\; q\geq m-1\;\mbox{ and } \; p+q> \frac{(N+\alpha)(m-1)}{N-m} &&\mbox{ if } \alpha=N-m,\\
& \min\{p,q\}>\frac{\alpha(m-1)}{N-m}\;\mbox{ and } \; p+q> \frac{(N+\alpha)(m-1)}{N-m} &&\mbox{ if } N-m<\alpha<N.
\end{aligned}
$$
\end{enumerate}
\end{corollary}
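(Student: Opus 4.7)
Part (i) is an immediate consequence of Theorem \ref{thm1}(i), which delivers nonexistence for $(H_m)$ operators with $m\geq N$ regardless of $p,q,\alpha$. For part (ii) my plan is the circular chain (ii3)$\Rightarrow$(ii1)$\Rightarrow$(ii2)$\Rightarrow$(ii3). The easy step (ii1)$\Rightarrow$(ii2) is a restriction argument: if $u$ is a positive solution of \eqref{go} on $\R^N$, then $u|_{\R^N\setminus\overline B_1}$ inherits positivity, the local integrability requirements, and \eqref{c1}; moreover the exterior-domain Riesz convolution $\int_{\R^N\setminus\overline B_1}I_\alpha(x-y)u^p(y)\,dy$ is pointwise dominated by the full $\R^N$-convolution, so the weak inequality against test functions supported in $\R^N\setminus\overline B_1$ is preserved.

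For (ii3)$\Rightarrow$(ii1) I would show that each of the three sub-cases, organised according to the position of $\alpha$ relative to $N-m$, is covered by Theorem \ref{thm3} or Theorem \ref{thm4}. The dichotomy driving the split is whether $p$ reaches the Sobolev threshold $\frac{N(m-1)}{N-m}$. If it does, Theorem \ref{thm3}(ii), or \ref{thm3}(i) at the boundary $\alpha=N-m$, $q=m-1$, yields a radial solution on $\R^N$ once the condition $q>m-1-\frac{N-\alpha-m}{N}p$ is verified. Otherwise $p<\frac{N(m-1)}{N-m}$, in which case the hypothesis $p+q>\frac{(N+\alpha)(m-1)}{N-m}$ forces $q>\frac{\alpha(m-1)}{N-m}$, placing the triple $(p,q,\alpha)$ in the range \eqref{three} where Theorem \ref{thm4} constructs a solution.

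The reverse direction (ii2)$\Rightarrow$(ii3) is argued by contrapositive and requires the nonexistence machinery. First observe that $(H_m)$ implies $W$-$m$-$C$: from $\mathcal{A}\cdot\eta = A(|\eta|)|\eta|^2$ and $|\mathcal{A}|=A(|\eta|)|\eta|$, the upper bound $A(t)\le Mt^{m-2}$ translates directly into $\mathcal{A}\cdot\eta\ge C|\mathcal{A}|^{m'}$, so both Theorem \ref{thm1} and Theorem \ref{thm2} are at our disposal on any exterior domain. Failure of $p>\frac{\alpha(m-1)}{N-m}$ is ruled out by Theorem \ref{thm1}(ii1); failure of $p+q>\frac{(N+\alpha)(m-1)}{N-m}$ is ruled out by Theorem \ref{thm1}(ii3) together with Theorem \ref{thm2}(iii), which jointly cover the entire range $p+q\le\frac{(N+\alpha)(m-1)}{N-m}$; and failure of the third condition is absorbed by Theorem \ref{thm2}(i)-(ii) when $\alpha\le N-m$ and by Theorem \ref{thm1}(ii2) when $\alpha>N-m$. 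The main difficulty is the bookkeeping at the boundary $\alpha=N-m$, where $N-\alpha-m$ vanishes and the third condition of (ii3) collapses onto a pure constraint on $q$; one must match each clause of each sub-case with the precise strict or non-strict hypothesis of the theorem invoked so that the three regimes are patched together without gaps.
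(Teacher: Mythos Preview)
Your circular strategy (ii3)$\Rightarrow$(ii1)$\Rightarrow$(ii2)$\Rightarrow$(ii3), fed by the existence Theorems~\ref{thm3}--\ref{thm4} on one side and the nonexistence Theorems~\ref{thm1}--\ref{thm2} on the other, is exactly what the paper intends; no separate proof of the corollary is given there. For $0<\alpha\le N-m$ your outline is complete: the split at $p=\frac{N(m-1)}{N-m}$ routes correctly into Theorem~\ref{thm3}(ii) or Theorem~\ref{thm4}, and your contrapositive for (ii2)$\Rightarrow$(ii3) matches the nonexistence statements clause by clause.

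There is, however, a genuine gap in the regime $N-m<\alpha<N$, and it cannot be repaired: the conditions recorded under (ii3) in that case are $p>m-1$, $q\ge m-1$, $p+q>\frac{(2N-m)(m-1)}{N-m}$, yet your argument silently upgrades these to $p>\frac{\alpha(m-1)}{N-m}$ and $p+q>\frac{(N+\alpha)(m-1)}{N-m}$ when you invoke Theorem~\ref{thm4} or the hypothesis of Theorem~\ref{thm3}(ii). For $\alpha>N-m$ those thresholds are \emph{strictly larger} than the ones stated in (ii3), so neither existence theorem applies. Concretely, choosing $m-1<p\le\frac{\alpha(m-1)}{N-m}$ and $q$ large enough, (ii3) holds while Theorem~\ref{thm1}(ii1) forbids any positive solution; thus (ii3)$\Rightarrow$(ii1) is actually false as printed. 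The defect lies in the corollary's statement, not in your method: the third line of (ii3) should read $p>\frac{\alpha(m-1)}{N-m}$, $q>\frac{\alpha(m-1)}{N-m}$, $p+q>\frac{(N+\alpha)(m-1)}{N-m}$ (i.e.\ condition~\eqref{three}), and with that correction your proof goes through verbatim. A similar boundary slip occurs at $\alpha=N-m$, $q=m-1$, where Theorem~\ref{thm3}(i) produces a solution for $p>\frac{N(m-1)}{N-m}$ even though the printed (ii3) demands $q>m-1$ strictly.
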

The diagrams below illustrate the cases $0<\alpha<N-m$ and $N-m\leq \alpha<N$ respectively. 

\begin{figure}[ht]
\begin{center}
  \includegraphics[width=6.5in ]{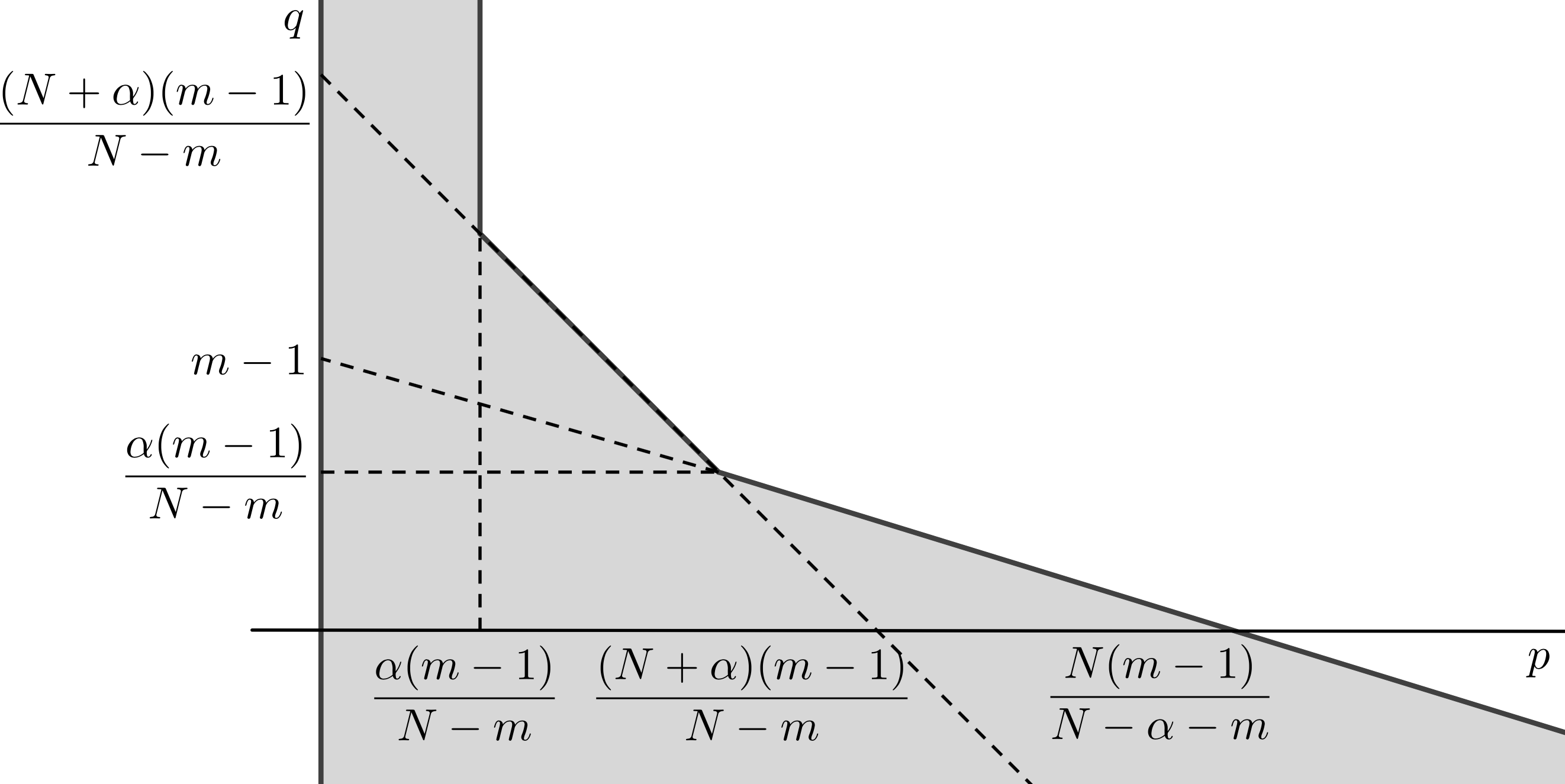}
  \caption{The nonexistence region (shaded) for positive solutions to \eqref{go} in the case $0<\alpha<N-m$.}
  \label{fig:case1}
  \end{center}
\end{figure}

\begin{figure}[ht!]
\begin{center}
  \includegraphics[width=6.2in ]{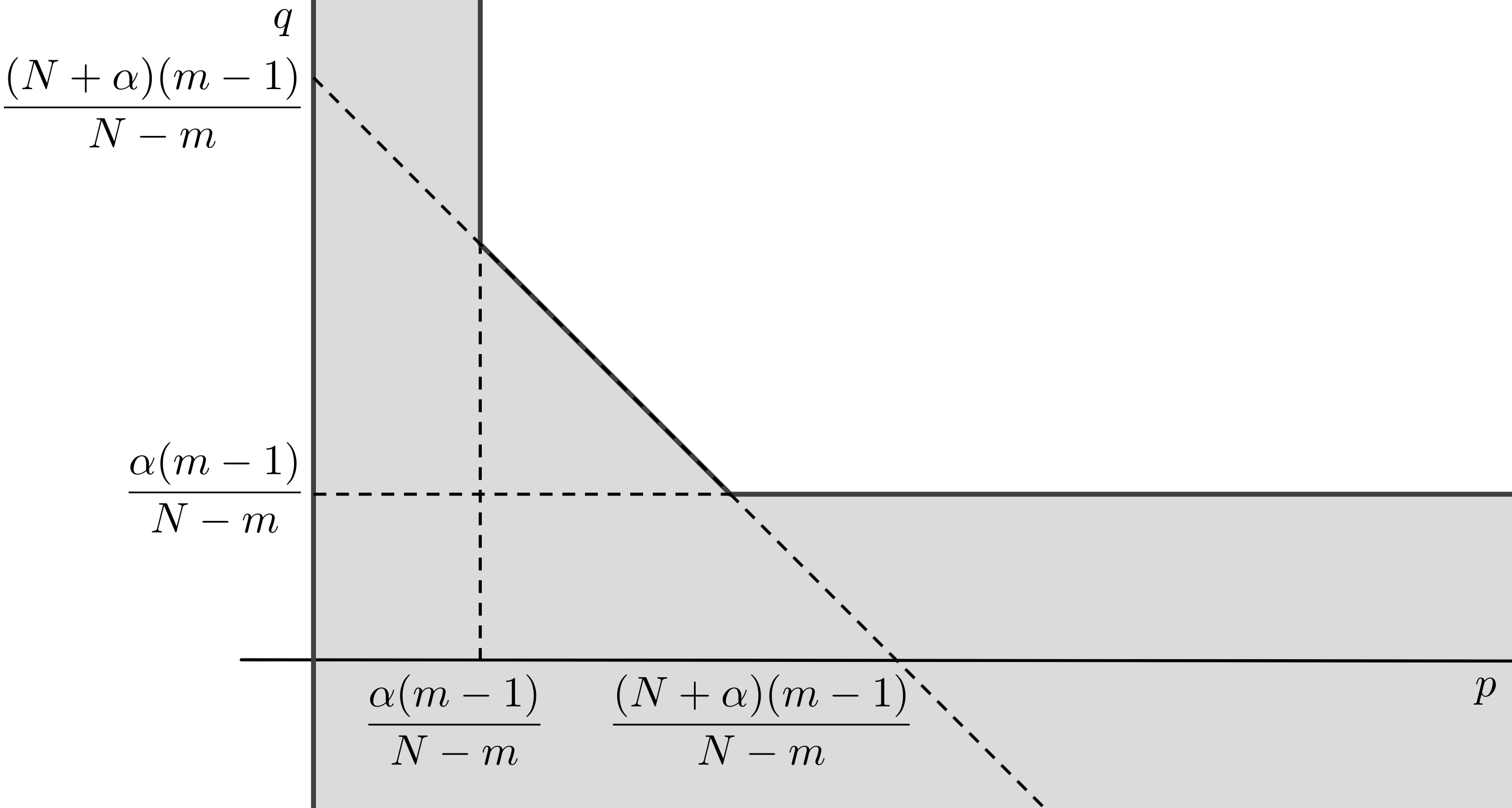}
  \caption{The nonexistence region (shaded) for positive solutions to \eqref{go} in the case $N-m\leq \alpha<N$.}
  \label{fig:case2}
  \end{center}
\end{figure}

\subsection{Existence of solutions in bounded domains}
In this section we assume that $\mathcal{A}$ satisfies \eqref{AA} and study inequality \eqref{go}
in open bounded sets. Our main goal is to show that solutions exist under very mild conditions on
$p,q$.

The first two theorems of this section deal with the case $p+q>m-1$.  For $N>1$ we have:

\begin{theorem} \label{thm5}
Assume $\mathcal{A}$ satisfies \eqref{AA} and \eqref{con1}. If $p>0$, $p+q>m-1$ and $N>1$, then
\eqref{go} has a bounded radial solution in any bounded open set $\Omega \subset \mathbb R^N$.
\end{theorem}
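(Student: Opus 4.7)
The plan is to construct an explicit radial ``cone-shaped'' subsolution. Choose $R>0$ so that $\overline{\Omega}\subset B_R$, set $K=2R$, and try the ansatz
$$u(x) = \varepsilon\,(K - |x|), \qquad x\in \Omega,$$
with a small parameter $\varepsilon>0$ to be adjusted later. Since $R \leq K-|x|\leq 2R$ on $\overline{\Omega}$, this $u$ is Lipschitz, strictly positive, and bounded, so the regularity, $L^1_{loc}$ and integrability requirements in the definition of positive solution (including condition \eqref{c1}) are immediate from the boundedness of $\Omega$.

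The key observation is that $|\nabla u|\equiv \varepsilon$ a.e., whence $\mathcal{A}(x,u,\nabla u) = -\varepsilon A(\varepsilon)\,\hat r$ is an $L^\infty$ vector field whose distributional divergence equals $-(N-1)\varepsilon A(\varepsilon)/|x|$; this is locally integrable exactly because $N>1$, and indeed this is the only place where the dimension hypothesis enters. The standard integration-by-parts identity (obtained by exhausting $\Omega$ by $\Omega\setminus B_\delta$ and noting that the boundary contribution on $\partial B_\delta$ is $O(\delta^{N-1})\to 0$) then yields, for every $\phi\in C_c^\infty(\Omega)$ with $\phi\geq 0$,
$$\int_\Omega \mathcal{A}(x,u,\nabla u)\cdot \nabla\phi\,dx = (N-1)\varepsilon A(\varepsilon)\int_\Omega \frac{\phi(x)}{|x|}\,dx \geq \frac{(N-1)C}{R}\,\varepsilon^{m-1}\int_\Omega \phi,$$
where the final bound uses \eqref{con1} (valid for $\varepsilon$ small) together with $|x|\leq R$ on $\Omega$.

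For the right-hand side of \eqref{go}, the sandwich $\varepsilon R\leq u\leq 2\varepsilon R$ gives $u^q\leq C_1\varepsilon^q$ (independently of the sign of $q$) and
$$(I_\alpha\ast u^p)(x)\leq A_\alpha(2\varepsilon R)^p\int_\Omega|x-y|^{\alpha-N}\,dy\leq C_2\,\varepsilon^p$$
uniformly on $\Omega$, so $\int_\Omega(I_\alpha\ast u^p)u^q\phi\,dx \leq C_3\,\varepsilon^{p+q}\int_\Omega \phi$. The required weak inequality therefore reduces to $(N-1)C\,\varepsilon^{m-1}/R \geq C_3\,\varepsilon^{p+q}$, which holds for all sufficiently small $\varepsilon$ precisely because $p+q>m-1$.

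The only technical subtlety, and the point that must be verified carefully, is the distributional identity $\mathrm{div}\,\hat r = (N-1)/|x|$ in $\Omega$ when $0\in\Omega$, with no Dirac-$\delta$ contribution at the origin. This is the feature that forces the hypothesis $N>1$ stated in the theorem; apart from that, the argument is purely a matter of matching powers of $\varepsilon$ under the assumption $p+q>m-1$.
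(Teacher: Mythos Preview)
Your proof is correct and follows essentially the same approach as the paper: the paper uses the ansatz $u(r)=\delta(1+R-r)$ while you use $u(x)=\varepsilon(2R-|x|)$, but both are affine radial functions with constant gradient magnitude, leading to the identical computation $L_{\mathcal A}u=(N-1)\,\varepsilon A(\varepsilon)/|x|\geq C\varepsilon^{m-1}$ and the same matching of powers $\varepsilon^{m-1}$ versus $\varepsilon^{p+q}$. Your explicit discussion of the distributional identity $\mathrm{div}\,\hat r=(N-1)/|x|$ and the vanishing of the boundary term on $\partial B_\delta$ is a welcome clarification of where the hypothesis $N>1$ is actually used; the paper leaves this implicit in its radial divergence formula.
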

For $N=1$ we have a similar statement as follows.
\begin{theorem} \label{thm6}
Assume that $\mathcal{A}$ satisfies \eqref{AA} and
\begin{equation} \label{con2}
\text{$tA'(t) + A(t) \geq C_0 t^{m-2}$ for small enough $t>0$,}
\end{equation}
where $C_0>0$ and $m>1$.  If $p>0$, $p+q>m-1$ and $N=1$, then \eqref{go} has a bounded radial
solution in any bounded open set $\Omega \subset \mathbb R^N$.
\end{theorem}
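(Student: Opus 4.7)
The plan is to parallel the proof of Theorem \ref{thm5} but use the stronger coercivity hypothesis \eqref{con2} to compensate for the lower-order contribution $(N-1)A(|\nabla u|)/r$ that arises from the radial divergence only when $N>1$. After a harmless translation, any bounded open $\Omega\subset\R$ lies inside some interval $(-R,R)$, and a positive solution on $(-R,R)$ restricts to one on $\Omega$, since the convolution over the smaller domain is pointwise dominated by the convolution over $(-R,R)$. So I may assume $\Omega=(-R,R)$.

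The heart of the argument is the explicit radial (i.e.\ even) ansatz
\[
u(x)=M-c|x|^{\beta},\qquad \beta=\frac{m}{m-1},\qquad c=\frac{M}{2R^{\beta}},
\]
with $M>0$ to be chosen small. Since $\beta>1$, we have $u\in C^{1}((-R,R))$ with $u'(0)=0$, and $M/2\le u\le M$. For $x\in(0,R)$ one computes, with $s:=c\beta|x|^{\beta-1}$,
\[
-\bigl[A(|u'|)u'\bigr]'(x)=c\beta(\beta-1)|x|^{\beta-2}\bigl[A(s)+sA'(s)\bigr],
\]
the same formula holding on $(-R,0)$ by symmetry. The exponent $\beta$ has been chosen so that $(\beta-1)(m-1)=1$; provided $M$ is small enough that $s\le\beta M/(2R)$ lies below the threshold where \eqref{con2} applies, the bracketed term dominates $C_{0}s^{m-2}$, and the powers of $|x|$ cancel, leaving the $x$-independent bound
\[
-\bigl[A(|u'|)u'\bigr]'(x)\ge K c^{m-1},\qquad K=\frac{C_{0}\beta^{m-1}}{m-1}.
\]

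On the other side, boundedness of $u$ together with local integrability of $I_{\alpha}$ on $\R$ yields $(I_{\alpha}\ast u^{p})(x)\,u^{q}(x)\le C_{1}M^{p+q}$ uniformly on $(-R,R)$, for some $C_{1}=C_{1}(R,\alpha,q)$. Substituting $c=M/(2R^{\beta})$, the pointwise inequality $Kc^{m-1}\ge C_{1}M^{p+q}$ collapses to $M^{p+q-(m-1)}\le K_{1}R^{-\beta(m-1)}$, which, since $p+q>m-1$ (and also imposing the smallness needed for \eqref{con2}), is satisfied by taking $M=M(R)$ sufficiently small.

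To promote the a.e.\ pointwise inequality to the weak formulation, I would integrate by parts: for $\phi\in C_{c}^{\infty}((-R,R))$ the boundary terms at $\pm R$ vanish by compact support, and $A(|u'|)u'$ extends continuously across $0$ because $u'(x)\to 0$, so $A(|u'|)u'\in W^{1,1}((-R,R))$. The most delicate point is verifying $L_{\mathcal{A}}u\in L^{1}_{loc}$ under the absence of any assumed upper bound on $A(s)+sA'(s)$; this is handled cleanly by the substitution $ds=c\beta(\beta-1)|x|^{\beta-2}\,dx$, which transforms $\int_{0}^{R}\bigl|L_{\mathcal{A}}u\bigr|\,dx$ into the telescoping integral $\int_{0}^{s_{R}}d[sA(s)]=s_{R}A(s_{R})<\infty$.
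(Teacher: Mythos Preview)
Your proof is correct and proceeds via a genuinely different ansatz from the paper's.  The paper places $\Omega\subset(0,R)$ and takes $u(r)=\delta\log(1+R+r)$; since this function is smooth and strictly increasing with $u''<0$, the one--line computation $L_{\mathcal{A}}u=-u''\,[tA'(t)+A(t)]\ge C_1\delta^{m-1}$ follows directly from \eqref{con2}, with no regularity issues anywhere on the interval.  Your choice $u(x)=M-c|x|^{m/(m-1)}$ on the symmetric interval $(-R,R)$ is elegant in that the exponent $\beta=m/(m-1)$ is exactly the one making the lower bound $L_{\mathcal{A}}u\ge Kc^{m-1}$ spatially constant, but it forces you to handle the origin: for $1<m<2$ one has $u''\notin L^{\infty}_{\rm loc}$, and you correctly spend the closing paragraph verifying $A(|u'|)u'\in W^{1,1}$ via the change of variables $x\mapsto s$ (which relies on the positivity of $(sA(s))'$ that \eqref{con2} also supplies, together with $A\in C[0,\infty)$ so that $sA(s)\to 0$).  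Both routes collapse to the same scaling comparison $\varepsilon^{m-1}\ge C\varepsilon^{p+q}$ for a small parameter $\varepsilon$.  The paper's logarithmic test function sidesteps your final technical paragraph entirely; your power-law barrier is arguably the more canonical radial profile but obliges the extra bookkeeping.
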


Conditions \eqref{AA}, \eqref{con1} and \eqref{con2} are clearly satisfied by the $m$-Laplace and
the $m$-mean curvature operators, for any $m>1$. More generally, these structural conditions are
fulfilled for $A(t) = t^{m-2} f(t)$, where $f$ is continuously differentiable at $t=0$ with
$f(0)>0$.  This is easy to check since
\begin{equation*}
\lim_{t\to 0^+} \frac{tA'(t) + A(t)}{t^{m-2}} = \lim_{t\to 0^+} \, [(m-1)f(t) + tf'(t)] =
(m-1) f(0).
\end{equation*}

Next, we establish two similar results for the case $p+q<m-1$.  In this setting, our previous
approach applies almost verbatim, but the assumptions \eqref{con1}, \eqref{con2} that we imposed
for small enough $t$ must now hold for large enough $t$. This is true for the $m$-Laplace operator
for any $m>1$.

\begin{theorem}\label{thm7}
Assume $\mathcal{A}$ satisfies \eqref{AA} and
\begin{equation} \label{con3}
\text{$A(t) \geq C_0t^{m-2}$ for large enough $t>0$,}
\end{equation}
where $C_0>0$ and $m>1$.  If $p>0$, $p+q<m-1$ and $N>1$, then \eqref{go} has a bounded radial
solution in any bounded open set $\Omega \subset \mathbb R^N$.
\end{theorem}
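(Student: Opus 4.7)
The plan is to exhibit an explicit bounded radial solution of the affine form $u(x) = K - \eta\,|x - x_0|$, where $x_0, K, \eta$ are chosen below. Since $\Omega$ is bounded I can pick $x_0 \in \R^N \setminus \overline{\Omega}$; then $\Omega$ lies in an annulus $B_R(x_0) \setminus \overline{B_\rho(x_0)}$ for some $0 < \rho < R$. On $\Omega$ the function $u$ is $C^\infty$, and setting $K = 2\eta R$ pins $u$ into $[\eta R,\, 2\eta R]$, in particular $u > 0$. The crucial feature is that $|\nabla u| \equiv \eta$ is constant on $\Omega$, so as soon as $\eta$ exceeds the threshold built into assumption \eqref{con3} one has $A(\eta) \geq C_0\,\eta^{m-2}$.

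Because $\mathcal{A}(x,u,\nabla u) = -\eta A(\eta)\,(x-x_0)/|x-x_0|$ is a constant scalar multiple of the unit radial vector field, its divergence is computed in one line and yields the pointwise identity
\[
L_{\mathcal A} u \;=\; \frac{(N-1)\,\eta A(\eta)}{|x - x_0|} \;\geq\; \frac{(N-1)\,C_0\,\eta^{m-1}}{R} \qquad \text{on } \Omega.
\]
This is exactly where the hypothesis $N > 1$ is essential: the lower bound would vanish in dimension one. For the right-hand side, the pinch $\eta R \leq u \leq 2\eta R$ together with the elementary bound $\int_{\Omega} |x-y|^{\alpha-N}\,dy \leq C(N,\alpha,R)$ yields $(I_\alpha \ast u^p)(x)\,u(x)^q \leq C_*\,\eta^{p+q}$ on $\Omega$, with $C_*$ independent of $\eta$ (one only has to be slightly careful about the sign of $q$ when bounding $u^q$ from above, using the lower bound $u \geq \eta R$ if $q<0$).

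Matching the two bounds reduces the pointwise inequality to $\eta^{m-1-p-q} \geq C_*R / ((N-1)C_0)$, which is solvable precisely because the hypothesis $p+q < m-1$ makes the exponent positive. Picking $\eta$ large enough to meet this and to trigger \eqref{con3} completes the construction; the weak formulation of \eqref{go} then follows from the pointwise inequality by standard integration by parts since $u \in C^\infty(\overline{\Omega})$. The main conceptual obstacle, I anticipate, is recognising that the radial centre $x_0$ need not be any distinguished point of $\Omega$: placing it outside $\overline{\Omega}$ both enforces a uniform positive lower bound on $|x-x_0|$ (needed in the lower bound for $L_{\mathcal{A}} u$) and sidesteps the difficulty that any smooth radial function must have vanishing gradient at its centre, which would be incompatible with the large-$t$ condition \eqref{con3}.
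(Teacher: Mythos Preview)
Your proof is correct and follows essentially the same approach as the paper: an affine radial function (the paper takes $u(r)=L(1+R-r)$ centered at the origin rather than at an exterior point $x_0$) so that $|\nabla u|$ is a large constant, then applies \eqref{con3} and $p+q<m-1$ to close the inequality by taking the scale parameter large. Your displacement of the centre outside $\overline{\Omega}$ is a nice touch that keeps $u$ smooth and gives a uniform lower bound on $|x-x_0|$, but the core argument is identical.
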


\begin{theorem}\label{thm8}
Assume $\mathcal{A}$ satisfies \eqref{AA} and
\begin{equation} \label{con4}
\text{$tA'(t) + A(t) \geq C_0 t^{m-2}$ for large enough $t>0$,}
\end{equation}
where $C_0>0$ and $m>1$.  If $p>0$, $p+q<m-1$ and $N=1$, then \eqref{go} has a bounded radial
solution in any bounded interval $\Omega \subset \mathbb R$.
\end{theorem}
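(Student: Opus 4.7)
The plan is to construct an explicit smooth, bounded, positive solution of \eqref{go} on $\Omega$ by means of a parabolic ansatz
\[
u(x) \;=\; M - \varepsilon\,(x-x_0)^2,
\]
which is radial about a point $x_0\in\R$ chosen strictly outside $\overline\Omega$. Setting $a:=\mathrm{dist}(x_0,\Omega)>0$ and $R:=\sup_{x\in\Omega}|x-x_0|<\infty$, one would then have $|u'(x)|=2\varepsilon|x-x_0|\in[2\varepsilon a,\,2\varepsilon R]$ throughout $\Omega$, hence uniformly bounded away from zero. This uniform lower bound on $|u'|$ is the decisive gain from placing the radial center outside $\overline\Omega$, because it will allow the large-$t$ hypothesis \eqref{con4} to be invoked at every point of $\Omega$.

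Writing $\Phi(s):=sA(s)$, a direct computation based on \eqref{AA} would give
\[
L_{\mathcal A}u(x)\;=\;-\bigl(A(|u'|)\,u'\bigr)'(x)\;=\;2\varepsilon\,\Phi'\bigl(2\varepsilon|x-x_0|\bigr),
\]
so choosing $\varepsilon$ large enough that $2\varepsilon a\geq T_0$, where $T_0$ is the threshold in \eqref{con4}, the hypothesis delivers $\Phi'(2\varepsilon|x-x_0|)\geq C_0(2\varepsilon|x-x_0|)^{m-2}$ throughout $\Omega$, and thence a lower bound $L_{\mathcal A}u(x)\geq c_1\,\varepsilon^{m-1}$ with $c_1>0$ depending only on $C_0$, $a$, $R$ and $m$.

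For the right-hand side I would then fix $M=2\varepsilon R^2$, so that $u(x)\in[\varepsilon R^2,\,2\varepsilon R^2]$ on $\Omega$: this keeps $u$ strictly positive and, up to a factor of two, comparable to $\varepsilon R^2$. Since $\Omega$ is a bounded interval and $\alpha\in(0,1)$, the integral $\int_{\Omega}|x-y|^{\alpha-1}\,dy$ is finite and bounded uniformly in $x\in\Omega$, so an elementary two-sided estimate on $u$, valid for either sign of $q$, would yield
\[
(I_\alpha\ast u^p)(x)\,u(x)^q \;\leq\; c_2\,\varepsilon^{p+q} \qquad\text{on }\Omega,
\]
for a constant $c_2$ depending only on $R$, $\alpha$, $p$, $q$ and $|\Omega|$. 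The inequality \eqref{go} then reduces to $c_1\varepsilon^{m-1}\geq c_2\varepsilon^{p+q}$, which is satisfied as soon as $\varepsilon$ is taken sufficiently large, precisely because the hypothesis $p+q<m-1$ turns $m-1-p-q$ into a positive exponent of $\varepsilon$.

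The main obstacle is the one flagged at the outset: \eqref{con4} is only a large-$t$ assumption on $\Phi'$, and at any point of $\Omega$ where $|u'|$ could be small the lower bound on $L_{\mathcal A}u$ would collapse. This is exactly what happens at the apex of a parabola whose center sits in $\overline\Omega$, so for $N=1$ the construction cannot be made radial about an interior point of $\Omega$ as in the higher-dimensional Theorem~\ref{thm7}; placing $x_0$ outside $\overline\Omega$ is the non-routine step in the argument. Once that choice is secured, the smoothness of $u$, its strict positivity, and the boundedness of $\Omega$ make all the regularity and integrability conditions in the definition of positive solution automatic, and the pointwise inequality promotes to the weak formulation against any test function in $C_c^\infty(\Omega)$ by integration by parts.
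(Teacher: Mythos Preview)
Your argument is correct. The paper's own proof (which it only sketches) follows the template of Theorem~\ref{thm6}: after translating so that $\Omega\subset(0,R)$, it takes $u(r)=L\log(1+R+r)$ for large $L$, so that $t=|u'(r)|=L/(1+R+r)$ lies in $[L/(1+2R),\,L/(1+R)]$ and is uniformly large, whence \eqref{con4} gives $L_{\mathcal A}u\geq C_1L^{m-1}$, while $(I_\alpha*u^p)u^q\leq C_4L^{p+q}$; then $p+q<m-1$ finishes it.

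Your parabola $u(x)=M-\varepsilon(x-x_0)^2$ with $x_0\notin\overline\Omega$ achieves exactly the same thing by a different explicit function: the choice of center outside $\overline\Omega$ plays the role of the paper's translation of $\Omega$ into $(0,R)$, and in both cases the point is to keep $|u'|$ uniformly bounded below so that the large-$t$ hypothesis \eqref{con4} becomes available everywhere. The subsequent scaling argument ($\varepsilon^{m-1}$ beats $\varepsilon^{p+q}$) is identical. Neither ansatz is more general than the other; yours has the mild advantage of not needing the preliminary translation, while the paper's has the advantage of reusing the exact function from Theorem~\ref{thm6}.
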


\medskip

\subsection{Systems}
In this section we consider three systems driven by the inequality \eqref{go}. More precisely we
investigate

\begin{equation}\label{sys1}
\left\{
\begin{aligned}
&L_\mathcal{A} u= -{\rm div}[\mathcal{A}(x, u, \nabla u)]\geq (I_\alpha*v^p)u^q\\
&L_\mathcal{B} v= -{\rm div}[\mathcal{B}(x, v, \nabla v)]\geq (I_\beta*u^r)v^s
\end{aligned}
\right.
\qquad\mbox{ in } \R^N\setminus \overline{B}_{1},
\end{equation}
\smallskip

\begin{equation}\label{sys2}
\left\{
\begin{aligned}
&L_\mathcal{A} u= -{\rm div}[\mathcal{A}(x, u, \nabla u)]\geq (I_\alpha*v^p)v^q\\
&L_\mathcal{B} v= -{\rm div}[\mathcal{B}(x, v, \nabla v)]\geq (I_\beta*u^r)u^s
\end{aligned}
\right.\qquad\mbox{ in } \R^N\setminus \overline{B}_{1},
\end{equation}
\smallskip

\begin{equation}\label{sys3}
\left\{
\begin{aligned}
&L_\mathcal{A} u= -{\rm div}[\mathcal{A}(x, u, \nabla u)]\geq (I_\alpha*u^p)v^q \\
&L_\mathcal{B} v= -{\rm div}[\mathcal{B}(x, v, \nabla v)]\geq (I_\beta*v^r)u^s
\end{aligned}
\right.
\qquad\mbox{ in } \R^N\setminus \overline{B}_{1},
\end{equation}
where $\alpha, \beta \in (0, N)$, $p,r>0$, $q, s\in \R$ and $\mathcal{A}$, $\mathcal{B}$ are
$W$-$m_1$-$C$ and $W$-$m_2$-$C$ respectively, for some $m_1$, $m_2> 1$. Solutions of
\eqref{sys1}-\eqref{sys3} are understood in the weak sense as we made precise in Definition
\ref{defW} for the single inequality \eqref{go}.

The result below provides sufficient conditions for nonexistence of solutions to
\eqref{sys1}-\eqref{sys3} for $W$-$m$-$C$ operators.

\begin{theorem}\label{thm9}
Assume $\mathcal{A}$ is $W$-$m_1$-$C$ and $\mathcal{B}$ is $W$-$m_2$-$C$ for some $m_1$, $m_2> 1$,
$\alpha, \beta \in (0, N)$, $p,r>0$, $q, s\in \R$.
\begin{enumerate}
\item[\rm (i)] If
\begin{equation}\label{s1}
2N-(m_1+m_2+\alpha+\beta)\leq N\Big(\frac{m_1-1-q}{r}+\frac{m_2-1-s}{p}\Big),
\end{equation}
\begin{equation}\label{s2}
p\geq m_2-1-s\geq 0 \quad\mbox{ and } \quad r\geq m_1-1-q\geq 0,
\end{equation}
then, \eqref{sys1} has no positive solutions.
\item[\rm (ii)] If
\begin{equation}\label{s3}
2N-(m_1+m_2+\alpha+\beta)\leq N\Big(\frac{m_2-1-q}{p}+\frac{m_1-1-r}{s}\Big),
\end{equation}
\begin{equation}\label{s4}
p\geq m_2-1-q\geq 0 \quad \mbox{ and } \quad r\geq m_1-1-s\geq 0,
\end{equation}
then, \eqref{sys2} has no positive solutions.
\item[\rm (iii)] If
\begin{equation}\label{s5}
2N-(m_1+m_2+\alpha+\beta)\leq N\Big(\frac{m_1-1-s}{p}+\frac{m_2-1-q}{r}\Big),
\end{equation}
\begin{equation}\label{s6}
p\geq m_1-1-s\geq 0 \quad \mbox{ and } \quad r\geq m_2-1-q\geq 0,
\end{equation}
then, \eqref{sys3} has no positive solutions.
\end{enumerate}
\end{theorem}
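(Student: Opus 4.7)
The plan is to prove all three parts by a single test-function argument, analogous to the one underlying the scalar nonexistence result Theorem~\ref{thm2} (and Proposition~\ref{p1}), with the coupling through the two Riesz potentials handled by an elementary lower bound on a ball. I describe the argument for part~(i); parts~(ii) and~(iii) follow by the same three-step scheme after permuting the roles played by the exponents that sit outside versus inside the convolutions---this is what shuffles the balance conditions \eqref{s1}--\eqref{s2} into \eqref{s3}--\eqref{s4} and \eqref{s5}--\eqref{s6}.

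Let $(u,v)$ be a positive solution of \eqref{sys1} in $\R^N\setminus\ovl B_1$. For every $R\geq 4$ and every $x\in B_{2R}\setminus\ovl B_1$, the bound $|x-y|\leq 3R$ for $y\in A_R:=B_R\setminus\ovl B_1$ gives
$$(I_\alpha\ast v^p)(x)\geq c\,R^{\alpha-N}\int_{A_R}v^p,\qquad (I_\beta\ast u^r)(x)\geq c\,R^{\beta-N}\int_{A_R}u^r.$$
Plugged into \eqref{sys1}, this reduces the problem on $B_{2R}\setminus\ovl B_1$ to two \emph{local} quasilinear inequalities with right-hand sides $cR^{\alpha-N}\bigl(\int_{A_R}v^p\bigr)u^q$ and $cR^{\beta-N}\bigl(\int_{A_R}u^r\bigr)v^s$. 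To each I apply the weakly coercive Mitidieri--Pohozaev test-function scheme on which Proposition~\ref{p1} is based: test against $\phi^\lambda$ for a smooth cutoff $\phi\equiv 1$ on $A_R$, supported in $B_{2R}\setminus\ovl B_{1/2}$, with $|\nabla\phi|\leq CR^{-1}$; integrate by parts; use $|\mathcal A|^{m_i'}\leq C\mathcal A\cdot\nabla(\cdot)$ coming from the $W$-$m_i$-$C$ property; and absorb the scalar factors $u^q$ and $v^s$ via Young's inequality against the coupled powers $u^r$ and $v^p$ coming from the other inequality. Hypothesis \eqref{s2} is precisely the statement that the exponents required for this Young absorption are nonnegative.

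The bookkeeping collapses the two resulting estimates into
$$\Big(\int_{A_R}v^p\Big)\Big(\int_{A_R}u^r\Big)\leq CR^{\kappa},\quad\kappa=2N-(m_1+m_2+\alpha+\beta)-N\left(\frac{m_1-1-q}{r}+\frac{m_2-1-s}{p}\right),$$
and condition \eqref{s1} is exactly $\kappa\leq 0$. If \eqref{s1} is strict, sending $R\to\infty$ bounds the right-hand side while the left-hand side is bounded below by a positive constant (evaluate the integrals on the fixed annulus $A_2$), giving a contradiction. The equality case $\kappa=0$ is treated in the standard Mitidieri--Pohozaev way by replacing the cutoff with one supported on a dyadic annulus $B_{2R}\setminus B_R$; the same bookkeeping then forces $\int_{B_{2R}\setminus B_R}u^r\to 0$ and $\int_{B_{2R}\setminus B_R}v^p\to 0$ as $R\to\infty$, again contradicting positivity of $u,v$.

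I expect the main obstacle to be the test-function step: unlike the scalar case, the two scalar factors $u^q$ and $v^s$ must be absorbed simultaneously, and the balance assumptions \eqref{s2}, \eqref{s4}, \eqref{s6}---the nonnegativity of $m_i-1-q$ and $m_i-1-s$, together with their domination by $p$ and $r$---are exactly what is required to perform Young's inequality twice without spoiling the scaling identity. These hypotheses are the system analog of the scalar condition $p+q\geq m-1$ appearing in Theorem~\ref{thm2}(iii), and they are what make the bookkeeping collapse into the sharp exponents \eqref{s1}, \eqref{s3}, \eqref{s5}.
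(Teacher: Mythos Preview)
Your outline follows the same scheme as the paper (test-function estimate from Proposition~\ref{p1} applied to each inequality, the elementary annulus lower bound on the Riesz potentials, then a multiplication of the two resulting estimates and a case analysis on the sign of the exponent). Two points, however, are not right as written.

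\textbf{(a) The ``collapsed'' estimate is misstated.} The test-function bookkeeping does \emph{not} produce
\[
\Big(\int_{A_R}v^p\Big)\Big(\int_{A_R}u^r\Big)\leq CR^{\kappa}.
\]
When you apply Proposition~\ref{p1} to $L_{\mathcal A}u\geq (I_\alpha\ast v^p)u^q$ with the correct test function (see below) and $\ell_1=r$, the right-hand side carries a factor $(\int u^r)^{(m_1-1-q)/r}$; similarly for the second inequality. After multiplying and absorbing, what one actually obtains is
\[
\Big(\int u^r\Big)^{1-\frac{m_1-1-q}{r}}\Big(\int v^p\Big)^{1-\frac{m_2-1-s}{p}}\leq CR^{\kappa},
\]
with exponents in $[0,1]$ by \eqref{s2}. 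For $\kappa<0$ this still contradicts positivity, but the correct exponents are essential in the borderline case. A related issue: ``Young absorption'' is the wrong mechanism here. The absorption is a H\"older step, and to keep the test exponent in the admissible range $[0,m_1-1]$ of Proposition~\ref{p1} when $q<0$ is allowed by \eqref{s2}, one must test with $u^{-(q+r\theta_1)}\phi^\lambda$ for a small auxiliary $\theta_1\in[0,1)$ (and likewise $\theta_2$ for $v$). This is exactly what \eqref{s2} makes possible, but your sketch does not carry it out.

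\textbf{(b) The equality case $\kappa=0$ is a non-sequitur.} The statement ``$\int_{B_{2R}\setminus B_R}u^r\to 0$ \dots\ contradicting positivity of $u,v$'' is false: vanishing dyadic-annulus mass is perfectly compatible with $u>0$ everywhere. The correct argument keeps the \emph{intermediate} inequality in which the dyadic-annulus integrals $\int_{\Omega_R}u^r$, $\int_{\Omega_R}v^p$ still sit on the right with strictly positive exponents, while the left side is taken over the growing domain $B_{4R}\setminus B_1$. From the final estimate with $\kappa=0$ one first reads off $\int_{\R^N\setminus B_1}u^r<\infty$ (and/or $\int v^p<\infty$), hence the annulus integrals tend to $0$; feeding this back into the intermediate inequality forces a quantity bounded below by a fixed positive constant to tend to $0$. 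That two-pass argument is what closes the $\kappa=0$ case, and it is missing from your sketch.
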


\begin{theorem}\label{thm10}
Assume $\mathcal{A}$ is $(H_{m_1})$ and $\mathcal{B}$ is $(H_{m_2})$ for some $m_1, m_2>1$ and let
$\alpha, \beta \in (0, N)$.
\begin{enumerate}
\item[\rm (i)] If one of the following conditions hold
\begin{enumerate}
\item[\rm (i1)] $\alpha>N-m_1$ and $m_1-1<q\leq \frac{\alpha(m_1-1)}{N-m_1}$;

\item[\rm (i2)] $\beta>N-m_2$ and $m_2-1<s\leq \frac{\beta(m_2-1)}{N-m_2}$;

\item[\rm (i3)] $\alpha>N-m_1$,  ${\mathcal A}$ is $S$-$m_1$-$C$  and $-\infty<q\leq \frac{\alpha(m_1-1)}{N-m_1}$;

\item[\rm (i4)] $\beta>N-m_2$,  ${\mathcal B}$ is $S$-$m_2$-$C$  and $-\infty<s\leq \frac{\beta(m_2-1)}{N-m_2}$;
\end{enumerate}
then, system \eqref{sys1} has no positive solutions.

\item[\rm (ii)] Assume $\alpha>N-m_1>0$, $\beta>N-m_2>0$, $q>m_1-1$, $s>m_2-1$  and define
$$
\begin{aligned}
\gamma&=\frac{(\alpha+m_1-N)(m_2-1)+(\beta+m_2-N)q}{qs-(m_1-1)(m_2-1)}\,,\\
\xi&=\frac{(\beta+m_2-N)(m_1-1)+(\alpha+m_1-N)s}{qs-(m_1-1)(m_2-1)}.
\end{aligned}
$$
If $$ \max\{ (m_1-1)\gamma-(N-m_1), (m_2-1)\xi -(N-m_2)\}\geq 0, $$ then systems \eqref{sys2} and
\eqref{sys3} have no positive solutions.

\end{enumerate}
\end{theorem}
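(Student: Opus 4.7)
The plan is to dispatch the two parts of Theorem \ref{thm10} separately. Part (i) reduces to four applications of the scalar nonexistence result Theorem \ref{thm1}, while part (ii) requires a system-level bootstrap on polynomial lower bounds for the spherical averages, culminating in a contradiction with the integrability condition \eqref{c1}.

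For part (i), each subcase decouples the system by deriving a pointwise lower bound on one component, which reduces the other inequality to a weighted scalar supersolution problem lying in the nonexistence range of Theorem \ref{thm1}. In case (i1), since $\mathcal B$ is $(H_{m_2})$ and $v>0$ satisfies $L_{\mathcal B}v \geq 0$, the maximum principle stated in the Remark after Definition \ref{defW} gives $v \geq \delta > 0$ on an annulus near $\partial B_1$. Hence $(I_\alpha\ast v^p)(x) \geq C|x|^{\alpha-N}$ for $|x|$ large (only the compactly supported piece of the convolution is required), and the first inequality of \eqref{sys1} becomes
\[
L_{\mathcal A}u \geq C|x|^{\alpha - N} u^q \quad\text{for $|x|$ large,}
\]
which is exactly the setting of Theorem \ref{thm1}(ii2) under $\alpha>N-m_1$ and $m_1-1<q\leq \alpha(m_1-1)/(N-m_1)$. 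Case (i2) is symmetric in $(u,v)$. Cases (i3) and (i4) proceed identically, with the $S$-$m_i$-$C$ structure permitting the full weak Harnack inequality to replace the maximum principle and so remove the lower restriction on the exponent.

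For part (ii) we fix a positive solution pair $(u,v)$ of system \eqref{sys2}; system \eqref{sys3} is treated identically. The $(H_{m_i})$-maximum principles first give $u,v\geq c>0$ on a fixed annulus, which makes the Riesz convolutions satisfy $(I_\alpha\ast v^p)(x)\geq C|x|^{\alpha-N}$ and $(I_\beta\ast u^r)(x)\geq C|x|^{\beta-N}$ for $|x|$ large. Substituting these into the two inequalities of the system and applying the a priori estimate of Proposition \ref{p1} to each scalar supersolution inequality, we obtain improved polynomial lower bounds on the spherical averages $\bar u(r),\bar v(r)$. Iterating, the decay exponents $\sigma_u^{(n)},\sigma_v^{(n)}$ evolve according to the linear recursion
\[
(m_1-1)\sigma_u^{(n+1)} = q\,\sigma_v^{(n)} - (\alpha+m_1-N), \qquad (m_2-1)\sigma_v^{(n+1)} = s\,\sigma_u^{(n)} - (\beta+m_2-N),
\]
whose fixed point is precisely the pair $(\gamma,\xi)$ defined in the statement. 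Under the hypothesis $\max\{(m_1-1)\gamma - (N-m_1),\,(m_2-1)\xi - (N-m_2)\}\geq 0$, the iteration pushes the effective exponent $\lambda$ in the scalar reduction to or past the critical value $m_i$; at that point Proposition \ref{p1} forces $\bar u$ (respectively $\bar v$) to grow polynomially at infinity, directly contradicting the integrability requirement \eqref{c1}. System \eqref{sys3} leads to the same recursion and fixed point, since restricting to the compact-set contribution of the Riesz convolution makes the distinction between $I_\alpha\ast u^p$ and $I_\alpha\ast v^p$ disappear.

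The main technical obstacle is carrying out the bootstrap with constants that remain uniform from one iteration to the next, so that the decay exponents improve strictly and the iteration may be driven all the way to the critical regime. A secondary difficulty arises at the borderline case $\lambda=m_i$, where Proposition \ref{p1} produces logarithmic corrections; fortunately these only enhance the resulting lower bound on $\bar u$ or $\bar v$ and are painlessly absorbed into the contradiction.
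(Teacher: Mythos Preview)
Your treatment of part (i) matches the paper's proof: reduce via the lower bound $(I_\alpha\ast v^p)(x)\geq C|x|^{\alpha-N}$ from Lemma~\ref{lbas}(i) to the local inequality $L_{\mathcal A}u\geq C|x|^{\alpha-N}u^q$, then invoke a scalar nonexistence result. The paper cites Proposition~\ref{p3} directly rather than Theorem~\ref{thm1}(ii2), but since the proof of the latter reduces to the former this is only cosmetic. Two small corrections: neither the maximum principle nor the weak Harnack inequality is needed to obtain the convolution lower bound, since Lemma~\ref{lbas}(i) requires only $v>0$ and $v^p\in L^1_{loc}$; and in (i3)--(i4) the $S$-$m_i$-$C$ hypothesis enters not in the convolution estimate but in the scalar nonexistence step, via Proposition~\ref{p3}(ii).

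For part (ii) the paper takes a much shorter route, and your bootstrap has a genuine gap. The paper simply observes that Lemma~\ref{lbas}(i) reduces both \eqref{sys2} and \eqref{sys3} to the coupled local system
\[
L_{\mathcal A}u\geq C|x|^{\alpha-N}v^q,\qquad L_{\mathcal B}v\geq C|x|^{\beta-N}u^s\qquad\text{in }\R^N\setminus B_2,
\]
which is exactly \eqref{sysh} with $a=\alpha-N$, $b=\beta-N$; nonexistence then follows at once from Proposition~\ref{psys}. Your iterative scheme is essentially the \emph{proof} of Proposition~\ref{psys} in \cite{BP2001}, so the strategy is not wrong in spirit, but the tool you name cannot do the job: Proposition~\ref{p1} produces \emph{upper} bounds on weighted integrals of the solution, not lower bounds on $u$ or its spherical averages. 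The bootstrap that actually works uses pointwise lower bounds of the type in Propositions~\ref{p2} and~\ref{p2.1} (more precisely, their general form \cite[Proposition~2.7]{BP2001}): from $L_{\mathcal A}u\geq C|x|^{-\lambda}$ one extracts $u(x)\geq c|x|^{(m_1-\lambda)/(m_1-1)}$ (or a logarithmic variant at the borderline), and it is this pointwise estimate, fed back into the other inequality, that drives the recursion on the decay exponents. Proposition~\ref{p1} alone cannot close that loop.
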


Our last result concerns the existence of solutions to systems \eqref{sys1}-\eqref{sys3} where
$\mathcal A(x,u, \nabla u) = A(|\nabla u|) \nabla u$ and $\mathcal B(x,u, \nabla u) = B(|\nabla u|)
\nabla u$ for some continuously differentiable functions $A,B$ that satisfy
\begin{equation}\label{ab1}
\frac{tA'(t)}{A(t)} \leq m_1 - 2\,, \quad \frac{tB'(t)}{B(t)} \leq m_2 -2
\qquad \mbox{ for small enough }t>0,
\end{equation}
and
\begin{equation}\label{ab2}
A(t) \geq C_0t^{m_1-2}\,,\quad  B(t) \geq C_0t^{m_2-2} \qquad \mbox{ for small enough }t>0,
\end{equation}
where $C_0>0$ and $m_1,m_2>1$.

\begin{theorem}\label{thm11}
Assume that $\mathcal{A}$, $\mathcal{B}$ satisfy \eqref{ab1} and \eqref{ab2},
 $N>m_1,m_2>1$ and
\begin{equation*}
p+q > m_1 - 1, \qquad r+s > m_2 -1.
\end{equation*}

\begin{itemize}
\item[\rm (i)] If
\begin{align*}
p > \frac{\alpha(m_2-1)}{N-m_2}, \qquad q > \frac{\alpha(m_1-1)}{N-m_1}, \qquad
r > \frac{\beta(m_1-1)}{N-m_1}, \qquad s> \frac{\beta(m_2-1)}{N-m_2}, \\
\frac{q(N-m_1)}{m_1-1} + \frac{p(N-m_2)}{m_2-1} > \alpha + N, \qquad
\frac{r(N-m_1)}{m_1-1} + \frac{s(N-m_2)}{m_2-1} > \beta + N,
\end{align*}
then, \eqref{sys1} has a radial solution in $\Omega= \mathbb R^N$.
\item[\rm (ii)] If
\begin{align*}
p > \frac{\alpha(m_2-1)}{N-m_2}, \qquad q > \frac{\alpha(m_2-1)}{N-m_2}, \qquad
p+q > \frac{(\alpha+N)(m_2-1)}{N-m_2}, \\
r > \frac{\beta(m_1-1)}{N-m_1}, \qquad s> \frac{\beta(m_1-1)}{N-m_1}, \qquad
r+s > \frac{(\alpha+N)(m_1-1)}{N-m_1},
\end{align*}
then, \eqref{sys2} has a radial solution in $\Omega= \mathbb R^N$.
\item[\rm (iii)]
If
\begin{align*}
p > \frac{\alpha(m_1-1)}{N-m_1}, \qquad q > \frac{\alpha(m_2-1)}{N-m_2}, \qquad
r > \frac{\beta(m_1-1)}{N-m_1}, \qquad s> \frac{\beta(m_2-1)}{N-m_2}, \\
\frac{p(N-m_1)}{m_1-1} + \frac{q(N-m_2)}{m_2-1} > \alpha + N, \qquad
\frac{s(N-m_1)}{m_1-1} + \frac{r(N-m_2)}{m_2-1} > \beta + N,
\end{align*}
then, \eqref{sys3} has a radial solution in $\Omega= \mathbb R^N$.
\end{itemize}
\end{theorem}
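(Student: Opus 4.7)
The plan is to construct explicit positive radial solutions of the form
$$u(x)=c_1(1+|x|^2)^{-a},\qquad v(x)=c_2(1+|x|^2)^{-b},$$
with decay exponents $a,b>0$ and small positive constants $c_1,c_2$, mirroring the strategy used to prove Theorems \ref{thm3} and \ref{thm4} and treating each of the three systems separately.

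Under \eqref{ab1}-\eqref{ab2}, for $c_1,c_2$ small enough the gradients $|\nabla u|,|\nabla v|$ are uniformly small on $\mathbb{R}^N$, so a direct radial computation (essentially differentiating $r^{N-1}A(|u'|)u'$) yields
$$L_{\mathcal A}u(x)\geq \kappa_1 c_1^{m_1-1}(1+|x|^2)^{-a(m_1-1)-m_1/2},\qquad L_{\mathcal B}v(x)\geq \kappa_2 c_2^{m_2-1}(1+|x|^2)^{-b(m_2-1)-m_2/2}.$$
Provided the individual lower bounds on $p,r$ hold (yielding $2bp>\alpha$ and $2ar>\beta$ with the choices of $(a,b)$ below), the convolutions are finite and
$$I_\alpha\ast v^p(x)\leq K\, c_2^p(1+|x|^2)^{-\mu/2},\qquad \mu=\min\{2bp-\alpha,\;N-\alpha\},$$
with the analogous bound for $I_\beta\ast u^r$. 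Matching the decay rates of the two sides of each inequality then reduces to a pair of algebraic conditions on $a,b$ and the exponents.

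For part (i), the natural choice is $a=(N-m_1)/(2(m_1-1))$ and $b=(N-m_2)/(2(m_2-1))$; substituting, the asymptotic matching in $L_{\mathcal A}u\geq(I_\alpha\ast v^p)u^q$ reduces exactly to
$$q\,\frac{N-m_1}{m_1-1}+p\,\frac{N-m_2}{m_2-1}\;\geq\;\alpha+N,$$
which is precisely the fifth stated assumption of (i); the second inequality of the system reduces the same way to the sixth assumption. Parts (ii) and (iii) use symmetrized analogues of $(a,b)$, tied to the operator acting on the same variable as the convolution, and each of the six exponent assumptions in the theorem corresponds to exactly one asymptotic-matching or convolution-integrability constraint. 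On a ball $\{|x|\leq R\}$ the left-hand sides have size $\sim c_j^{m_j-1}$ while the right-hand sides have size $\sim c_1^q c_2^p$ or its symmetric analogue, so the conditions $p+q>m_1-1$, $r+s>m_2-1$ let me pick $c_1,c_2$ small enough that the left-hand sides dominate globally.

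The main obstacle is the coupled two-parameter balance: a single pair $(a,b)$ must simultaneously satisfy both matching inequalities of a given system, lie in the strip where the convolution estimates apply, and admit a compatible pair $(c_1,c_2)$; the assumed inequalities in each part are precisely what make this system of constraints consistent. A secondary technical issue is that when $m_j>2$ the classical $L_{\mathcal A}u$ vanishes at $x=0$ where $\nabla u=0$, so the pointwise inequality fails on a negligible set; this is absorbed in the weak formulation by testing against $\varphi\in C_c^\infty$ and using that the bad region shrinks to $o(1)$ measure as $c_1,c_2\to 0^+$ in the regime $p+q>m_1-1$, $r+s>m_2-1$.
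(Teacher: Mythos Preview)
Your ansatz has a genuine gap at the very step you call ``a direct radial computation.'' For the pure power $u=c_1(1+|x|^2)^{-a}$ with your \emph{critical} choice $2a=\frac{N-m_1}{m_1-1}$, the claimed lower bound
\[
L_{\mathcal A}u\geq \kappa_1 c_1^{m_1-1}(1+|x|^2)^{-a(m_1-1)-m_1/2}=\kappa_1 c_1^{m_1-1}(1+|x|^2)^{-N/2}
\]
is false. Carrying out the computation for the $m$-Laplacian (and the proof of Theorem~\ref{thm3} shows the same phenomenon for general $\mathcal A$), the leading coefficient $[\,2a(m_1-1)+m_1-N\,]$ \emph{vanishes} at the critical rate, and the next term gives only $L_{\mathcal A}u\sim c_1^{m_1-1}(1+|x|^2)^{-(N+2)/2}$. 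Matching this against the convolution bound then requires $\gamma_1 q+\gamma_2 p\geq \alpha+N+2$, which is strictly stronger than the hypothesis. This is precisely why the paper's proof, following Theorem~\ref{thm4}, uses the logarithmic correction
\[
u(r)=\e(1+r)^{-\gamma_1}\Big(1-\tfrac{k}{1+\log(1+r)}\Big),\qquad v(r)=\e(1+r)^{-\gamma_2}\Big(1-\tfrac{k}{1+\log(1+r)}\Big),
\]
which restores a lower bound $L_{\mathcal A}u\geq C\e^{m_1-1}(1+r)^{-N}(1+\log(1+r))^{-2m_1}$ at the critical decay. The strict inequalities in the hypotheses then absorb the logarithmic losses. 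A cleaner alternative, closer in spirit to Theorem~\ref{thm3}, would be to take $a$ and $b$ strictly \emph{subcritical} but close enough to $\frac{N-m_j}{2(m_j-1)}$: since every constraint you list holds with strict inequality at the critical point, a continuity argument gives a genuine choice of $(a,b)$; but this is not what you wrote.

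There is a second, independent problem at the origin. For $u=c_1(1+|x|^2)^{-a}$ one has $\nabla u(0)=0$, and a direct computation gives $L_{\mathcal A}u\sim C\,c_1^{m_1-1}r^{m_1-2}$ near $r=0$. When $m_1>2$ this tends to $0$, while the right-hand side $(I_\alpha\ast v^p)(0)\,u(0)^q$ is a fixed positive number, so the pointwise inequality fails on a \emph{fixed} neighbourhood of the origin regardless of how small you take $c_1,c_2$ (the ratio behaves like $c_1^{m_1-1-q}c_2^{-p}r^{m_1-2}$, and the $r$-factor kills any gain from scaling). Your weak-formulation patch does not repair this: testing against a nonnegative $\varphi$ supported near $0$ gives the wrong sign. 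The paper's ansatz avoids the issue because $u'(0)\neq 0$ there.
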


\medskip

\section{Preliminary Results}

A key tool in our approach is the use of  a priori estimates for solutions $u\in C(\Omega)\cap
W^{1,1}_{loc}(\Omega)$ of the general inequality
\begin{equation}\label{ff}
L_\mathcal{A}  u\geq f(x)\quad\mbox{ in }\Omega,
\end{equation}
where $f\in L^1_{loc}(\Omega)$, $f\geq 0$. Solutions $u$ of \eqref{ff} are understood in the weak
sense, that is, $$ \mathcal{A}(x, u, \nabla u)\in L^{1}_{loc}(\Omega)^{N}, L_{\mathcal{A}}u\in
L^{1}_{loc}(\Omega) $$ and
\begin{equation}\label{var}
\int_{\Omega}\mathcal{A}(x, u, \nabla u)\cdot \nabla \varphi \geq \int_{\Omega}f(x) \varphi
\quad\mbox{ for any }\varphi \in C_{c}^{\infty}(\Omega),\varphi\geq 0.
\end{equation}
The result below is a reformulation of \cite[Proposition 2.1]{BP2001} (see also \cite[Theorem
2.1]{DM2010} for a more general setting).
\begin{proposition}\label{p1}
Let $\Omega\subset \R^N$ be an open set such that for some $R>0$ we have $$ B_{4R}\setminus
B_{R/2}\subset \Omega\qquad  (\mbox{ resp. }B_{2R}\subset \Omega \;). $$ Assume $\mathcal{A}$ is
$W$-$m$-$C$ and let $u\in C(\Omega)\cap  W^{1,1}_{loc}(\Omega)$ be a positive solution of \eqref{ff}.

Let $\phi\in C^\infty_c(\Omega)$ be a standard cut-off function such that:
\begin{itemize}
\item $0\leq \phi\leq 1$ and supp$\,\phi\subset B_{4R}\setminus B_{R/2}$  (resp. supp$\,\phi\subset B_{2R}$);
\item  $\phi=1$ in $B_{2R}\setminus B_{R}$ (resp. $\phi=1$ in $B_{R}$);
\item  $|\nabla \phi|\leq \frac{C}{R}$ in $\Omega$.
\end{itemize}
Then, for any $\lambda>m$,  $0\leq \theta \leq m-1$ and $\ell> m-1-\theta$, there exists $C>0$
independent of $R$ such that
\begin{equation}\label{mainest1}
\int_{\Omega} f(x) u^{-\theta}\phi^\lambda \leq CR^{N-m-\frac{m-1-\theta}{\ell}N}
\Big(\int_\Omega u^\ell\phi^\lambda \Big)^{\frac{m-1-\theta}{\ell}}.
\end{equation}
In particular,
\begin{itemize}
\item[(i)] If $B_{4R}\setminus B_{R/2}\subset \Omega$ then
\begin{equation}\label{mainest3}
\int_{B_{2R}\setminus B_R} f(x) u^{-\theta} \leq CR^{N-m-\frac{m-1-\theta}{\ell}N}
\Big(\int_{B_{4R}\setminus B_{R/2}}u^\ell \Big)^{\frac{m-1-\theta}{\ell}}.
\end{equation}
\item[(ii)] If $B_{2R}\subset \Omega$ then
\begin{equation}\label{mainest4}
\int_{B_{2R}}f(x) u^{-\theta} \leq CR^{N-m-\frac{m-1-\theta}{\ell}N}
\Big(\int_{B_{2R}}u^\ell \Big)^{\frac{m-1-\theta}{\ell}}.
\end{equation}
\end{itemize}
\end{proposition}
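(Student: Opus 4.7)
The plan is to test the weak formulation of $L_{\mathcal A} u \ge f$ against the ``balanced'' function $\varphi = u^{-\theta}\phi^\lambda$ (strictly, first against $(u+\delta)^{-\theta}\phi^\lambda$ and then passing $\delta\downarrow 0$, with mollification so that the test sits in $C_c^\infty$), exploit $W$-$m$-$C$ to absorb a gradient term into the left-hand side, and finally extract the $L^\ell$-weight of $u$ via H\"older. The choice of exponent $-\theta$ on $u$ is what makes the Young step below dimensionally clean.

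Using $\nabla\varphi = -\theta u^{-\theta-1}\phi^\lambda \nabla u + \lambda u^{-\theta}\phi^{\lambda-1}\nabla\phi$ in the weak formulation yields
$$
\int f\,u^{-\theta}\phi^\lambda + \theta \int (\mathcal{A}\cdot\nabla u)\,u^{-\theta-1}\phi^\lambda \;\le\; \lambda \int (\mathcal{A}\cdot\nabla\phi)\,u^{-\theta}\phi^{\lambda-1},
$$
both left summands being nonnegative since $\mathcal{A}\cdot\nabla u\ge C|\mathcal{A}|^{m'}\ge 0$. I would then factor the right-hand integrand as
$$
|\mathcal{A}||\nabla\phi|\,u^{-\theta}\phi^{\lambda-1} = \bigl[|\mathcal{A}|^{m'}u^{-\theta-1}\phi^\lambda\bigr]^{1/m'}\bigl[|\nabla\phi|^m u^{m-1-\theta}\phi^{\lambda-m}\bigr]^{1/m}
$$
(the $u$- and $\phi$-exponents match because $\varphi$ was picked as above and $1/m+1/m'=1$), apply Young with parameter $\varepsilon>0$, and invoke $|\mathcal{A}|^{m'}\le C^{-1}\mathcal{A}\cdot\nabla u$ so that the first resulting summand becomes a constant multiple of $\int (\mathcal{A}\cdot\nabla u)\,u^{-\theta-1}\phi^\lambda$, i.e.\ the very quantity sitting on the left. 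Taking $\varepsilon$ small (of order $\theta$) absorbs it and leaves
$$
\int f\,u^{-\theta}\phi^\lambda \;\le\; C \int |\nabla\phi|^m u^{m-1-\theta}\phi^{\lambda-m},
$$
after which $|\nabla\phi|\le C/R$ contributes the factor $R^{-m}$.

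It remains to re-express $\int u^{m-1-\theta}\phi^{\lambda-m}$ in terms of $\int u^\ell\phi^\lambda$. This is a H\"older with conjugate exponents $\ell/(m-1-\theta)$ and $\ell/(\ell-m+1+\theta)$, splitting the integrand as $(u^\ell\phi^\lambda)^{(m-1-\theta)/\ell}$ times a pure cutoff factor whose integral is $O(R^N)$ once $\lambda$ is large enough to keep the residual exponent on $\phi$ nonnegative (the constant $C$ then depends on $\lambda,\theta,\ell$ but not on $R$). Combining the $R$-powers gives the announced exponent $N-m-(m-1-\theta)N/\ell$, and parts (i), (ii) then follow by specializing $\phi$ to be an annular cutoff in $B_{4R}\setminus B_{R/2}$ or a radial cutoff in $B_{2R}$, using $\phi\equiv 1$ on the corresponding smaller set. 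The main obstacles are the admissibility of $u^{-\theta}\phi^\lambda$ as a test function (handled by the $(u+\delta)^{-\theta}$ regularization together with mollification, using $u>0$ and $\mathcal{A}(x,u,\nabla u)\in L^1_{loc}$) and the absorption step, which goes through cleanly for $\theta\in(0,m-1]$ but degenerates at the endpoint $\theta=0$; that endpoint can be reached either by a controlled limit $\theta\downarrow 0$ in the estimate just obtained, or alternatively by using the pointwise bound $|\mathcal{A}|\le C'|\nabla u|^{m-1}$ which $W$-$m$-$C$ implies via Cauchy--Schwarz applied to $\mathcal{A}\cdot\nabla u\ge C|\mathcal{A}|^{m'}$.
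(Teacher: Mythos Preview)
Your argument for $\theta\in(0,m-1]$ is essentially the paper's: test with $u^{-\theta}\phi^\lambda$, use $W$-$m$-$C$ to push the $|\mathcal A|^{m'}$ term to the left, absorb the cross term via Young, and finish with H\"older. That part is fine.

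The gap is at $\theta=0$, and neither of your two suggested fixes closes it.

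\emph{Limit $\theta\downarrow 0$.} The absorbable term on the left carries the coefficient $C_1\theta$, so the Young parameter must be of order $\theta$ and the constant in front of $\int u^{m-1-\theta}\phi^{\lambda-m}|\nabla\phi|^m$ behaves like $\theta^{1-m}$. The final constant in the estimate therefore blows up as $\theta\to 0$ and the limit is vacuous.

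\emph{Pointwise bound $|\mathcal A|\le C'|\nabla u|^{m-1}$.} This bound is correct (from $C|\mathcal A|^{m'}\le\mathcal A\cdot\nabla u\le |\mathcal A||\nabla u|$), but after testing with $\phi^\lambda$ it leaves you with $\int\phi^{\lambda-1}|\nabla u|^{m-1}|\nabla\phi|$, and closing from there needs a Caccioppoli-type control of $\int|\nabla u|^m\phi^a$. That would follow from $S$-$m$-$C$ (i.e.\ $\mathcal A\cdot\nabla u\ge c|\nabla u|^m$), but $W$-$m$-$C$ only yields $\mathcal A\cdot\nabla u\ge C|\mathcal A|^{m'}$, which is strictly weaker; no gradient bound is available.

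The paper treats $\theta=0$ by a different device: test with $\phi^\lambda$, then split $\int\phi^{\lambda-1}|\mathcal A||\nabla\phi|$ by H\"older with exponents $m',m$ after inserting an auxiliary weight $u^{\pm(\beta+1)/m'}$ for some fixed $\beta\in(0,m-1)$,
\[
\int f\phi^\lambda\;\le\;\lambda\Big(\int u^{-\beta-1}\phi^\lambda|\mathcal A|^{m'}\Big)^{1/m'}\Big(\int u^{(1+\beta)(m-1)}\phi^{\lambda-m}|\nabla\phi|^m\Big)^{1/m}.
\]
The first factor is then bounded by $C\int u^{m-1-\beta}\phi^{\lambda-m}|\nabla\phi|^m$ using the already-established inequality from the case $\theta=\beta>0$ (your absorption step, but retaining the $|\mathcal A|^{m'}$ term on the left). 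Two further H\"older estimates finish. The point is that what one genuinely controls from the $\theta>0$ argument is $\int u^{-\beta-1}\phi^\lambda|\mathcal A|^{m'}$, not $\int|\nabla u|^m$; the H\"older split is tailored to exploit exactly that quantity.
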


\begin{proof}
Assume first $\theta>0$ and let $\varphi=u^{-\theta}\phi^\lambda$ in \eqref{var}. We find $$
\begin{aligned}
\int_{\Omega} f(x) u^{-\theta} \phi^\lambda & \leq
\int_\Omega {\mathcal A}(x, u, \nabla u)\cdot \nabla \varphi \\
&=
-\theta\int_\Omega u^{-\theta-1}\phi^\lambda {\mathcal A}(x, u, \nabla u)\cdot
\nabla u+\lambda\int_\Omega \phi^{\lambda-1}u^{-\theta} {\mathcal A}(x, u, \nabla u)\cdot \nabla \phi.
\end{aligned}
$$
Using Definition \ref{defW} of a $W$-$m$-$C$ operator it follows that
\begin{equation}\label{proof1}
\int_{\Omega}f(x) u^{-\theta} \phi^\lambda +
C_1\theta\int_\Omega u^{-\theta-1}\phi^\lambda |{\mathcal A}(x, u, \nabla u)|^{m'}
\leq \lambda\int_\Omega \phi^{\lambda-1}u^{-\theta} {\mathcal A}(x, u, \nabla u)\cdot \nabla \phi.
\end{equation}
By Young's inequality it follows that
\begin{equation}\label{proof2}
\lambda \phi^{\lambda-1}u^{-\theta} {\mathcal A}(x, u, \nabla u)\cdot \nabla \phi
\leq \frac{C_1\theta}{2} u^{-\theta-1}\phi^\lambda |{\mathcal A}(x, u, \nabla u)|^{m'} +
C(\theta, \lambda)u^{m-1-\theta} \phi^{\lambda-m}|\nabla \phi|^m.
\end{equation}
Using \eqref{proof2} in \eqref{proof1} we find
\begin{equation}\label{proof3}
\int_{\Omega}f(x) u^{-\theta} \phi^\lambda +\frac{C_1\theta}{2}
\int_\Omega u^{-\theta-1}\phi^\lambda |{\mathcal A}(x, u, \nabla u)|^{m'}
\leq C\int_\Omega u^{m-1-\theta} \phi^{\lambda-m}|\nabla \phi|^m.
\end{equation}
If $\theta=m-1$, it follows from \eqref{proof3} and the properties of $\phi$ that $$
\int_{\Omega}f(x) u^{-\theta} \phi^\lambda \leq C\int_\Omega \phi^{\lambda-m}|\nabla \phi|^m\leq
CR^{N-m}, $$ which proves \eqref{mainest1}. Assume next that $0<\theta<m-1$ and let
\begin{equation}\label{g}
\gamma=\frac{\ell}{m-1-\theta}>1.
\end{equation}
From \eqref{proof3} and H\"older's inequality (in the following $\gamma'$ stands for the H\"older's
conjugate of $\gamma$) we find $$
\begin{aligned}
\int_{\Omega}f(x) u^{-\theta} \phi^\lambda&\leq   C\int_\Omega u^{m-1-\theta} \phi^{\lambda-m}|\nabla \phi|^m\\
&\leq C\Big(\int_{{\rm supp}\nabla \phi} u^\ell \phi^\lambda\Big)^{1/\gamma}
\Big(\int_{{\rm supp}\nabla \phi}  \phi^{\lambda-m\gamma'}|\nabla \phi|^{m\gamma'}\Big)^{1/\gamma'}\\
&\leq C R^{\frac{N}{\gamma'}-m} \Big(\int_{{\rm supp}\nabla \phi} u^\ell \phi^\lambda\Big)^{1/\gamma}\\
&=CR^{N-m-\frac{m-1-\theta}{\ell}N} \Big(\int_\Omega u^\ell\phi^\lambda \Big)^{\frac{m-1-\theta}{\ell}}.
\end{aligned}
$$
It remains to discuss the case $\theta=0$. We fix $\beta\in (0, m-1)$ such that
\begin{equation}\label{t}
\tau=\frac{\ell}{(1+\beta)(m-1)}>1
\end{equation}
and let  $\varphi=\phi^\lambda$ in \eqref{var}. By H\"older's inequality we have
\begin{equation}\label{proof4}
\begin{aligned}
\int_{\Omega}f(x) \phi^\lambda & \leq
\lambda\int_\Omega \phi^{\lambda-1} {\mathcal A}(x, u, \nabla u)\cdot \nabla \phi\\
&\leq \lambda \Big(\int_\Omega u^{-\beta-1}\phi^\lambda |{\mathcal A}(x, u, \nabla u)|^{m'} \Big)^{1/m'}
\Big( \int_\Omega u^{(1+\beta)(m-1)} \phi^{\lambda-m}|\nabla \phi|^m\Big)^{1/m}.
\end{aligned}
\end{equation}
Let now $\varphi=u^{-\beta} \phi^\lambda$ in \eqref{var}. Using the same argument as above we
arrive at \eqref{proof3} in which $\theta$ is replaced now with $\beta$. In particular, we find
\begin{equation}\label{proof5}
\int_\Omega u^{-\beta-1}\phi^\lambda |{\mathcal A}(x, u, \nabla u)|^{m'}
\leq C\int_\Omega u^{m-1-\beta} \phi^{\lambda-m}|\nabla \phi|^m.
\end{equation}
Combining \eqref{proof4} and \eqref{proof5} we deduce
\begin{equation}\label{proof6}
\int_{\Omega}f(x)\phi^\lambda \leq \lambda
\Big( \int_\Omega u^{m-1-\beta} \phi^{\lambda-m}|\nabla \phi|^m\Big)^{1/m'}
\Big( \int_\Omega u^{(1+\beta)(m-1)} \phi^{\lambda-m}|\nabla \phi|^m\Big)^{1/m}.
\end{equation}
Using H\"older's inequality with exponents $\gamma$ and $\tau$ defined in \eqref{g} and \eqref{t}
we obtain
\begin{equation}\label{proof7}
\begin{aligned}
\int_\Omega u^{m-1-\beta} \phi^{\lambda-m}|\nabla \phi|^m \!\leq \!
\Big(\int_\Omega u^\ell\phi^\lambda\Big)^{1/\gamma}
\Big(\int_\Omega \phi^{\lambda-m\gamma'}|\nabla \phi|^{m\gamma'}\Big)^{1/\gamma'}\!\!\!\leq
C R^{\frac{N}{\gamma'}-m} \Big(\int_\Omega u^\ell \phi^\lambda\Big)^{1/\gamma},
\end{aligned}
\end{equation}
and
\begin{equation}\label{proof8}
\begin{aligned}
\int_\Omega u^{(1+\beta)(m-1)} \phi^{\lambda-m}|\nabla \phi|^m\!\leq\!
\Big(\int_\Omega u^\ell\phi^\lambda\Big)^{1/\tau}
\Big(\int_\Omega \phi^{\lambda-m\tau'}|\nabla \phi|^{m\tau'}\Big)^{1/\tau'}
\!\!\!\leq C R^{\frac{N}{\tau'}-m} \Big(\int_\Omega  u^\ell \phi^\lambda\Big)^{1/\tau}.
\end{aligned}
\end{equation}
Now, \eqref{mainest1} follows by combining \eqref{proof6}-\eqref{proof8}.
\end{proof}

Letting $\theta= m-1$ in Proposition \ref{p1} above we derive the following a priori estimates
which is a counterpart of \cite[Lemma 4.5]{MV2013}.

\begin{lemma}\label{l1}
Assume $\mathcal{A}$ is $W$-$m$-$C$ for some $m> 1$ and let $u$ be a positive solution of
\eqref{go} in $\R^N\setminus \overline{B}_1$. Then
\begin{equation*}
\Big(\int_{B_{2R}\setminus B_1}u^p\Big)\Big(\int_{B_{2R}\setminus B_R}u^{q-m+1}\Big)
\leq CR^{2N-m-\alpha} \quad\mbox{ for all }R> 2.
\end{equation*}
\end{lemma}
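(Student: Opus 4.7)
The plan is to specialize Proposition \ref{p1}(i) to the choice $\theta=m-1$, which kills the $u^\ell$ factor on the right-hand side, and then to combine the resulting estimate with a crude but uniform lower bound for the Riesz convolution on the annulus $B_{2R}\setminus B_R$.

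First I would verify the geometric hypothesis: since $R>2$ we have $R/2>1$, so $B_{4R}\setminus B_{R/2}\subset \R^N\setminus\overline{B}_1=\Omega$, and Proposition \ref{p1}(i) applies to the function $f(x)=(I_\alpha\ast u^p)(x)\,u^q(x)$, which is in $L^1_{\mathrm{loc}}(\Omega)$ by the definition of a positive solution. Taking $\theta=m-1$ in that proposition, the proof shows that the right-hand side of \eqref{mainest3} collapses to $CR^{N-m}$ (this is exactly the case $\theta=m-1$ treated in the proof of Proposition \ref{p1}). Therefore
\begin{equation*}
\int_{B_{2R}\setminus B_R}(I_\alpha\ast u^p)(x)\, u^{q-m+1}(x)\,dx \leq CR^{N-m}.
\end{equation*}

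Next I would derive a pointwise lower bound for $I_\alpha\ast u^p$ on the annulus $B_{2R}\setminus B_R$. For any $x\in B_{2R}\setminus B_R$ and any $y\in B_{2R}\setminus B_1$ one has $|x-y|\leq |x|+|y|< 4R$, hence
\begin{equation*}
(I_\alpha\ast u^p)(x)=\int_{\R^N\setminus\overline{B}_1}\frac{A_\alpha\, u^p(y)}{|x-y|^{N-\alpha}}\,dy\geq \frac{A_\alpha}{(4R)^{N-\alpha}}\int_{B_{2R}\setminus B_1}u^p(y)\,dy.
\end{equation*}
Inserting this lower bound into the previous integral estimate and pulling the constant factor out yields
\begin{equation*}
\frac{c}{R^{N-\alpha}}\Big(\int_{B_{2R}\setminus B_1}u^p\Big)\Big(\int_{B_{2R}\setminus B_R}u^{q-m+1}\Big)\leq CR^{N-m},
\end{equation*}
and multiplying through by $R^{N-\alpha}$ gives the desired estimate.

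The only subtle point is the justification that $\theta=m-1$ is an admissible choice in Proposition \ref{p1}: the statement there requires $\ell>m-1-\theta$, which is automatic (any $\ell>0$ works) when $\theta=m-1$, and the proof of that case does not involve $\ell$ at all, so no further restrictions on $q$ enter. In particular the factor $u^{q-m+1}$ may be integrated even if $q-m+1$ is negative, because the test function $u^{-\theta}\phi^\lambda$ is built from the positive solution $u$ and the preceding computation does not need any sign hypothesis on $q$.
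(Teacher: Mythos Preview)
Your proof is correct and follows exactly the paper's approach: apply Proposition~\ref{p1}(i) with $\theta=m-1$ to obtain $\int_{B_{2R}\setminus B_R}(I_\alpha\ast u^p)u^{q-m+1}\leq CR^{N-m}$, then use the elementary bound $|x-y|\leq 4R$ for $x\in B_{2R}\setminus B_R$ and $y\in B_{2R}\setminus B_1$ to estimate the convolution from below. Your additional remarks verifying the geometric hypothesis $B_{4R}\setminus B_{R/2}\subset\Omega$ and the admissibility of $\theta=m-1$ are welcome clarifications that the paper leaves implicit.
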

\begin{proof}
Taking $\theta= m-1$ in Proposition \ref{p1} we find
\begin{equation}\label{l1a}
\int_{B_{2R}\setminus B_R}(I_\alpha*u^p)u^{q-m+1} \leq CR^{N-m}  \quad\mbox{ for all  }R>2.
\end{equation}
If $x\in B_{2R}\setminus B_R$ and $y\in B_{2R}\setminus B_{1}$, then $|x-y|\leq |x|+|y|\leq 4R$ so
\begin{equation}\label{l1c}
\begin{aligned}
(I_\alpha*u^p)(x)&\geq C\int_{B_{2R}\setminus B_{1}}\frac{u^{p}(y)}{|x-y|^{N-\alpha}}dy\\
&\geq C\int_{B_{2R}\setminus B_{1}}\frac{u^{p}(y)}{(4R)^{N-\alpha}}dy\\
&\geq CR^{-N+\alpha}\int_{B_{2R}\setminus B_1}u^{p}(y)dy.
\end{aligned}
\end{equation}
The proof concludes by combining \eqref{l1c} and \eqref{l1a}.
\end{proof}

\begin{proposition}\label{p2}{\rm (See \cite[Proposition 2.6]{BP2001})}
Suppose $\mathcal{A}$ is $(H_m)$ and let $u$ be a nonnegative solution of $$ L_\mathcal{A} u=f\geq
0 \quad\mbox{ in }\R^N\setminus \overline{B}_1, $$ for some $f\in L^1_{loc}(\R^N\setminus
\overline{B}_1)$. Then, there exists $c> 0$ such that
\begin{equation*}
\left\{
\begin{aligned}
u(x)&\geq c|x|^{-\frac{N-m}{m-1}} &&\quad\mbox{ if }\,N> m\\
u(x)&\geq c &&\quad\mbox{ if }\,N\leq m
\end{aligned}
\right.
\qquad\mbox{ in } \R^N\setminus B_2.
\end{equation*}
\end{proposition}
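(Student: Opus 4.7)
The plan is to exploit the comparison principle that holds for $L_{\mathcal A}$ under the structural condition $(H_m)$ (noted in Remark~3.1(ii)), by constructing explicit radial sub-barriers with the claimed decay rate and comparing $u$ with them on annuli $B_R\setminus\overline{B}_2$. First I would dispose of the trivial case: since $L_{\mathcal A}u=f\geq 0$, the function $u$ is a nonnegative weak supersolution of $L_{\mathcal A}u=0$, so the strong maximum principle available under $(H_m)$ forces either $u\equiv 0$ or $u>0$ throughout the exterior, in which case continuity gives $u\geq \delta>0$ on $\partial B_2$ for some $\delta$.

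For the main case $N>m$, set $\beta=(N-m)/(m-1)$ and consider the truncated radial barrier
$$
w_R(x) = c\bigl(|x|^{-\beta} - R^{-\beta}\bigr)_+, \qquad 2\leq |x|\leq R,
$$
for $R>2$ large, with $c>0$ chosen small enough that $w_R\leq \delta$ on $\partial B_2$ and $|w_R'(r)|=c\beta r^{-\beta-1}<1$ throughout the annulus. The critical identity $(\beta+1)(m-1)=N-1$ makes $r^{N-1}|w_R'(r)|^{m-1}$ independent of $r$, so for the pure $m$-Laplacian $w_R$ is exactly radial-harmonic outside $\overline{B}_2$. For a general $(H_m)$ operator one combines the two-sided bound $M^{-1}t^{m-2}\leq A(t)\leq Mt^{m-2}$ (valid on $(0,1)$, which is where $|w_R'|$ lives thanks to the smallness of $c$) with the monotonicity of $t\mapsto tA(t)$ to check $L_{\mathcal A}w_R\leq 0$ in the weak sense. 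The comparison principle then yields $u\geq w_R$ on $B_R\setminus\overline{B}_2$, and letting $R\to\infty$ gives $u(x)\geq c|x|^{-\beta}$.

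For $N\leq m$ the same scheme is used, but the barrier must be adapted to the absence of a nontrivial decaying radial solution. When $N=m$ I would take $w_R(r)=c\log(R/r)/\log(R/2)$, which equals $c$ on $\partial B_2$, vanishes on $\partial B_R$, and again realises the critical exponent balance. When $N<m$, constants already solve $L_{\mathcal A}w=0$ exactly (because $\mathcal A(x,u,0)=0$), so one compares directly with $w\equiv c\leq \delta$ on an annulus and then lets $R\to\infty$. Either sub-case yields the uniform lower bound $u\geq c$ on $\R^N\setminus B_2$ claimed in the statement.

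The main obstacle I anticipate is the rigorous verification that $w_R$ is a weak subsolution of $L_{\mathcal A}$. For the pure $m$-Laplacian this is immediate from the exponent balance, but a general $(H_m)$ operator only satisfies $A(t)\sim t^{m-2}$ (with constants $M^{\pm 1}$) for small $t$, so one must combine the upper/lower bounds on $A$ with the monotonicity of $tA(t)$ in a compatible way, perhaps by replacing $w_R$ with a slightly smaller radial profile so that $L_{\mathcal A}w_R\leq 0$ strictly. A secondary delicate point is the $N\leq m$ regime, where the lack of a decaying barrier forces the limit $R\to\infty$ to be taken carefully, ensuring that the boundary contribution on $\partial B_R$ in the weak comparison actually vanishes.
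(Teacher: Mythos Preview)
The paper does not prove this proposition; it simply quotes \cite[Proposition 2.6]{BP2001}. Your barrier-comparison strategy is the standard route to such bounds, and the $N>m$ and $N=m$ cases are correct in outline. You rightly flag the subsolution check for a general $(H_m)$ operator as the main technical point; note, though, that the two-sided bound on $A$ together with the monotonicity of $tA(t)$ is \emph{not} by itself enough to force $L_{\mathcal A}w_R\leq 0$ for the exact profile $c(r^{-\beta}-R^{-\beta})$ with $\beta=(N-m)/(m-1)$: what is actually needed is that $\Phi(t)/t^{m-1}$ be nondecreasing (where $\Phi(t)=tA(t)$), whereas the $(H_m)$ hypotheses only bound this ratio between $M^{-1}$ and $M$. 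One handles this either by perturbing the exponent or by the integral/averaging argument of \cite{BP2001}.

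There is a more concrete error in the $N<m$ case. Comparing $u$ with the constant $w\equiv c$ on $B_R\setminus\overline B_2$ requires $u\geq w$ on the \emph{whole} boundary, but on $\partial B_R$ you only know $u\geq 0$, not $u\geq c$, so the comparison principle simply does not apply. This is not a ``boundary contribution'' that might vanish in the limit; the hypothesis of the comparison fails for every finite $R$. The remedy is to use a barrier that vanishes on $\partial B_R$: take $w_R(r)=\delta\,(R^{\gamma}-r^{\gamma})/(R^{\gamma}-2^{\gamma})$ with $\gamma=(m-N)/(m-1)>0$. For the $m$-Laplacian this is exactly $m$-harmonic (the balance $(1-\gamma)(m-1)=N-1$ holds), it equals $\delta$ on $\partial B_2$, vanishes on $\partial B_R$, and converges pointwise to $\delta$ as $R\to\infty$, which then yields $u\geq\delta$ in parallel with your other two cases.
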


\begin{proposition}\label{p2.1}{\rm (See \cite[Proposition 2.7(ii)]{BP2001})}
Suppose $N>m>1$, $\mathcal{A}$ is $(H_m)$ and let $u>0$  satisfy $$ L_\mathcal{A} u\geq C|x|^{-N}
\quad\mbox{ in }\R^N\setminus\overline{B}_1, $$ for some $C>0$. Then, there exists $c> 0$ such that
$$ u(x) \geq c|x|^{-\frac{N-m}{m-1}} \big(\ln |x|\big)^{\frac{1}{m-1}} \quad\mbox{ in }
\R^N\setminus B_2. $$
\end{proposition}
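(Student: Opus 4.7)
The plan is to compare $u$ with a radial function built from the inhomogeneous radial ODE associated with $L_{\mathcal{A}}w=C|x|^{-N}$, whose asymptotic profile is exactly $r^{-\alpha}(\ln r)^{1/(m-1)}$ with $\alpha=(N-m)/(m-1)$; the weak comparison principle (available since $\mathcal{A}$ is $(H_m)$) and the baseline lower bound furnished by Proposition~\ref{p2} then do the rest.

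\textbf{Step 1 (radial ODE analysis).} For a radial $w(r)$ with $w'(r)<0$ one has $L_{\mathcal{A}}w = -r^{1-N}\bigl(r^{N-1}A(|w'|)w'\bigr)'$. Trying the reference profile $w_\star(r)=r^{-\alpha}(\ln r)^{1/(m-1)}$, differentiation gives $|w_\star'(r)|\sim \alpha\, r^{-\alpha-1}(\ln r)^{1/(m-1)}\to 0$ as $r\to\infty$, so for $r$ large $(H_m)$ provides the two-sided bound $M^{-1}|w_\star'|^{m-2}\le A(|w_\star'|)\le M|w_\star'|^{m-2}$, whence $A(|w_\star'|)|w_\star'|\sim |w_\star'|^{m-1}$. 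Using the identity $(\alpha+1)(m-1)=N-1$, this yields $r^{N-1}A(|w_\star'|)|w_\star'|\sim \alpha^{m-1}\ln r$, whose derivative produces $L_{\mathcal{A}}w_\star\sim \alpha^{m-1}/r^N$. More generally, integrating $(r^{N-1}A(|w'|)w')'=-C/r$ twice, and using $|w'|\to 0$ together with $(H_m)$ to replace $A(|w'|)|w'|$ by $|w'|^{m-1}$ in the asymptotic regime, one concludes that any radial solution of $L_{\mathcal{A}}w=C|x|^{-N}$ with bounded value at a finite radius $r_0$ admits the asymptotic
\[
w(r)\;\sim\; c\, r^{-\alpha}(\ln r)^{1/(m-1)}\qquad \text{as } r\to\infty.
\]

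\textbf{Step 2 (comparison on bounded annuli and passage to the limit).} By Proposition~\ref{p2}, $u(x)\ge c_0|x|^{-\alpha}$ on $\R^N\setminus B_2$. Fix $r_0\ge 2$ as in Step~1 and, for each $R>r_0$, let $w_R$ be the unique radial solution of $L_{\mathcal{A}}w_R=C|x|^{-N}$ on the annulus $A_R=B_R\setminus \overline{B}_{r_0}$ with boundary data $w_R(r_0)=c_0 r_0^{-\alpha}$ and $w_R(R)=c_0 R^{-\alpha}$; existence and uniqueness of $w_R$ follow from the monotonicity of $t\mapsto A(t)t$ in $(H_m)$. Then $u\ge w_R$ on $\partial A_R$ by Proposition~\ref{p2} and the choice of boundary data, and since $L_{\mathcal{A}}u\ge C|x|^{-N}=L_{\mathcal{A}}w_R$, the weak comparison principle for $(H_m)$ operators on bounded domains yields $u\ge w_R$ throughout $A_R$. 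The family $\{w_R\}$ is monotone nondecreasing in $R$ (compare $w_{R_1}$ and $w_{R_2}$ for $R_1<R_2$ on $A_{R_1}$, using that $r^{-\alpha}$ is asymptotically $L_{\mathcal{A}}$-harmonic) and locally bounded, hence $w_R\to w_\infty$ locally uniformly on $\{|x|\ge r_0\}$, with $w_\infty$ a radial solution of $L_{\mathcal{A}}w_\infty=C|x|^{-N}$ on $(r_0,\infty)$. Step~1 identifies the asymptotic $w_\infty(r)\sim c\,r^{-\alpha}(\ln r)^{1/(m-1)}$ as $r\to\infty$, so $u(x)\ge w_\infty(|x|)$ yields the claim on $\R^N\setminus B_{r_0}$; adjusting the constant using the positivity of $u$ on the compact shell $\{2\le|x|\le r_0\}$ extends the inequality to $\R^N\setminus B_2$.

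\textbf{Main obstacle.} The technical heart is the asymptotic analysis of the radial ODE in Step~1 and the monotonicity/stability of the family $\{w_R\}$ in Step~2. For the pure $m$-Laplacian the radial equation reduces to quadrature and the asymptotic profile is immediate; for a general $(H_m)$ operator one only knows $A(t)\approx t^{m-2}$ as $t\to 0^+$ up to the multiplicative constant $M$, so the prefactor of $(\ln r)^{1/(m-1)}$ is only determined up to a bounded factor, and the $o(1)$ corrections in the integrated ODE must be controlled carefully when extracting the logarithmic term. The monotonicity of $w_R$ in $R$, which legitimizes the limit, is the other delicate point and ultimately hinges on the fact that the power $r^{-\alpha}$ is asymptotically a solution of the homogeneous quasilinear equation $L_{\mathcal{A}}v=0$.
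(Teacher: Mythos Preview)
The paper does not prove this proposition; it is stated without proof and attributed to \cite[Proposition~2.7(ii)]{BP2001}. So there is no in-paper argument to compare against, and your outline is in fact the standard barrier-plus-comparison method underlying the cited result.

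That said, your Step~2 contains a real gap at precisely the point you flag as the ``main obstacle''. The monotonicity of $\{w_R\}$ in $R$ does not follow from $r^{-\alpha}$ being merely \emph{asymptotically} $L_{\mathcal A}$-harmonic: for a general $(H_m)$ operator the sign of $L_{\mathcal A}(c_0 r^{-\alpha})$ is not determined (one only knows $r^{N-1}A(|v'|)\,|v'|$ lies between two positive constants, not that it is monotone), so the inequality $w_{R_2}(R_1)\geq c_0 R_1^{-\alpha}$ that would drive the comparison between $w_{R_1}$ and $w_{R_2}$ is unjustified. The existence/uniqueness of $w_R$ is also not quite immediate from the monotonicity of $t\mapsto tA(t)$ alone.

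The clean way around both issues --- and the route actually taken in \cite{BP2001} --- is to avoid the two-point Dirichlet family entirely and integrate the radial ODE once with the \emph{outer} boundary condition $\Phi_R(R)=0$. Then $u\geq \Phi_R$ on $\partial B_R$ is automatic (by positivity of $u$), $u\geq \Phi_R$ on $\partial B_{r_0}$ holds since $\Phi_R(r_0)$ is controlled, and comparison gives $u\geq \Phi_R$ on the annulus with no monotonicity argument needed. The first integral of the ODE reads $r^{N-1}A(|\Phi_R'|)\,|\Phi_R'|=K_R+C\ln(r/r_0)$; using the $(H_m)$ bounds $M^{-1}t^{m-1}\leq tA(t)\leq Mt^{m-1}$ for small $t$ one inverts to get $|\Phi_R'(r)|\geq c\,r^{-(N-1)/(m-1)}(\ln r)^{1/(m-1)}$, and a second integration from $r$ to $R$ yields $\Phi_R(r)\geq c\,r^{-\alpha}(\ln r)^{1/(m-1)}-c\,R^{-\alpha}(\ln R)^{1/(m-1)}$. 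Letting $R\to\infty$ gives the claim directly.
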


The next result concerns the inequality
\begin{equation}\label{p3a}
L_\mathcal{A} u= -{\rm div}[\mathcal{A}(x,u, \nabla u)]\geq |x|^{\sigma}u^q
\quad\mbox{ in }\R^N\setminus \overline{B}_2.
\end{equation}

\begin{proposition}\label{p3}{\rm (See \cite[Theorems 3.3 and 3.4]{BP2001})}
Suppose $N> m> 1$, $\mathcal {A}$ is $(H_m)$ and one of the following conditions hold.
\begin{enumerate}
\item[\rm (i)] $m-1<q\leq \frac{(N+\sigma)(m-1)}{N-m}$.

\item[\rm (ii)] ${\mathcal A}$ is $S$-$m$-$C$ and $-\infty<q\leq m-1$.
\end{enumerate}
Then \eqref{p3a} has no positive solutions.
\end{proposition}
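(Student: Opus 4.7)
The plan is to argue by contradiction, combining the a priori integral estimate of Proposition \ref{p1} with the pointwise lower bounds of Propositions \ref{p2} and \ref{p2.1}. Suppose $u > 0$ solves \eqref{p3a} and set $f(x) = |x|^{\sigma} u^{q}$, so that $L_{\mathcal A} u \geq f$ in the exterior domain.

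For case (i), where $m-1 < q \leq \frac{(N+\sigma)(m-1)}{N-m}$, I would apply Proposition \ref{p1} with $\theta = m-1$, which yields
\[
\int_{B_{2R} \setminus B_R} |x|^{\sigma} u^{q-m+1}\, dx \leq C R^{N-m}
\]
for all large $R$. Since $q-m+1>0$, the bound $u(x) \geq c|x|^{-(N-m)/(m-1)}$ from Proposition \ref{p2} transfers to $u^{q-m+1}(x) \geq c|x|^{-(N-m)(q-m+1)/(m-1)}$, and integration in polar coordinates shows that the left-hand side grows like $R^{N+\sigma - (N-m)(q-m+1)/(m-1)}$, a rate strictly faster than $R^{N-m}$ precisely when $q < \frac{(N+\sigma)(m-1)}{N-m}$, giving the contradiction in the subcritical range. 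At the critical exponent the two rates balance; however, in that regime the preceding pointwise bound propagates through $L_{\mathcal A} u \geq |x|^{\sigma} u^{q}$ to yield $L_{\mathcal A} u \geq c|x|^{-N}$, and Proposition \ref{p2.1} then delivers the improved bound $u(x) \geq c|x|^{-(N-m)/(m-1)}(\ln |x|)^{1/(m-1)}$. Reinserting this refined estimate generates an extra factor $(\ln R)^{(q-m+1)/(m-1)}$ on the left that diverges with $R$, forcing the contradiction at criticality as well.

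For case (ii), where $q \leq m-1$ and $\mathcal A$ is $S$-$m$-$C$, I would exploit Remark~(i), which activates the weak Harnack inequality for $L_{\mathcal A}$. I would first dispose of $q = m-1$ directly: Proposition \ref{p1} with $\theta = m-1$ collapses to $\int_{B_{2R}\setminus B_R} |x|^{\sigma}\, dx \leq CR^{N-m}$, incompatible with the polar-coordinate evaluation of the left-hand side in the admissible range of $\sigma$. For $q < m-1$ the exponent $q - m + 1 < 0$ makes the naive use of the lower bound of Proposition \ref{p2} go in the wrong direction, so at this point I would call on the weak Harnack inequality on annuli to compare $u$ with its $L^{s}$ averages; combined with Proposition~\ref{p1} applied with a well-chosen $\theta \in [0,m-1]$ and $\ell > m-1-\theta$, this comparison upgrades the bound from Proposition~\ref{p2} to an estimate strong enough to beat the right-hand side $CR^{N-m}$.

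The most delicate step will be the logarithmic-improvement argument at the critical exponent in (i): one must first verify that criticality really does propagate the pointwise bound of Proposition \ref{p2} into the clean inequality $L_{\mathcal A} u \geq c|x|^{-N}$ needed to trigger Proposition \ref{p2.1}, and then track the resulting logarithmic factor carefully through the polar-coordinate integration. In (ii), the main technical obstacle is to calibrate the parameters $\theta$ and $\ell$ in Proposition \ref{p1} so that the information supplied by weak Harnack genuinely dominates once the integrand carries a negative power of $u$; this is the precise point at which the stronger $S$-$m$-$C$ hypothesis, rather than $W$-$m$-$C$, becomes indispensable.
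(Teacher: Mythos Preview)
The paper does not give its own proof of Proposition~\ref{p3}; it is quoted verbatim from \cite[Theorems~3.3 and~3.4]{BP2001} as a preliminary result, so there is no in-paper argument to compare your proposal against.

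That said, your plan for part~(i) is sound and in fact mirrors precisely the machinery the paper deploys elsewhere (compare the proof of Theorem~\ref{thm1}(ii3)): the combination of Proposition~\ref{p1} with $\theta=m-1$, the pointwise bound from Proposition~\ref{p2}, and the logarithmic upgrade via Proposition~\ref{p2.1} at the critical exponent is exactly the right sequence. Your check that criticality yields $L_{\mathcal A}u\geq c|x|^{-N}$ is correct, since $q(N-m)/(m-1)=N+\sigma$ at the endpoint.

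For part~(ii) your sketch is honest but incomplete. Two cautions. First, your treatment of $q=m-1$ via $\int_{B_{2R}\setminus B_R}|x|^{\sigma}\leq CR^{N-m}$ only contradicts the polar-coordinate computation when $\sigma>-m$; the proposition as stated carries no such restriction on $\sigma$, so either an additional argument is needed for $\sigma\leq -m$ or you are implicitly assuming the range relevant to the paper's applications (where indeed $\sigma=\alpha-N>-m$). Second, for $q<m-1$ you correctly identify weak Harnack as the missing ingredient, but ``calibrating $\theta$ and $\ell$'' is where the real work lies in \cite{BP2001}; the argument there is an iteration that is not a one-line application of Proposition~\ref{p1}, so this step would need substantial fleshing out.
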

Recall that if ${\mathcal A}$ is $(H_m)$ then ${\mathcal A}$ is $S$-$m$-$C$ if and only if
\eqref{smc} holds.

\medskip

Our next result concerns systems of type
\begin{equation}\label{sysh}
\left\{
\begin{aligned}
L_\mathcal{A} u & = -{\rm div}[\mathcal{A}(x, u, \nabla u)]\geq |x|^av^q\\
L_\mathcal{B} v& = -{\rm div}[\mathcal{B}(x, v, \nabla v)]\geq |x|^bu^s
\end{aligned}
\right.
\qquad\mbox{ in } \R^N\setminus B_{1}
\end{equation}
where ${\mathcal A}$ and ${\mathcal B}$ are $(H_{m_1})$ and $(H_{m_2})$ respectively with
$m_1,m_2>1$. Also, $$ a,b\in \R\,, \;q>m_1-1\,, \; s>m_2-1. $$ Denote $$
\gamma=\frac{(a+m_1)(m_2-1)+(b+m_2)q}{qs-(m_1-1)(m_2-1)}\,,\quad
\xi=\frac{(b+m_2)(m_1-1)+(a+m_1)s}{qs-(m_1-1)(m_2-1)}. $$

\begin{proposition}\label{psys}{\rm (See \cite[Theorem 5.3]{BP2001})}
Suppose $N>\max\{ m_1, m_2\}$  and $$ \max\{ (m_1-1)\gamma-(N-m_1), (m_2-1)\xi -(N-m_2)\}\geq 0. $$
Then, \eqref{sysh} has no positive solutions.
\end{proposition}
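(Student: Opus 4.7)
\textbf{Proof proposal for Proposition \ref{psys}.}

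The plan is to prove nonexistence by iteratively improving pointwise lower bounds on $u$ and $v$ until the bounds conflict with a global a priori estimate. The numbers $\gamma$ and $\xi$ enter naturally: they are the fixed points of the affine iteration that sends a pair of decay exponents $(\alpha,\beta)$ for $u,v$ to the decay exponents produced by plugging $u\gtrsim|x|^{-\alpha}$ and $v\gtrsim|x|^{-\beta}$ back into \eqref{sysh}. Formally, $\gamma,\xi$ are obtained by requiring scaling balance $-(m_1-1)\gamma-m_1=a-q\xi$ and $-(m_2-1)\xi-m_2=b-s\gamma$; solving this $2\times 2$ linear system (under the condition $qs>(m_1-1)(m_2-1)$, which is forced by $q>m_1-1$, $s>m_2-1$ and the nonexistence hypothesis) yields precisely the stated $\gamma$ and $\xi$.

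First, since $L_\mathcal{A} u\geq 0$ and $L_\mathcal{B} v\geq 0$ in $\R^N\setminus\overline B_1$ and $N>m_1,m_2$, Proposition~\ref{p2} supplies the base step
$$
u(x)\geq c|x|^{-\alpha_0},\qquad v(x)\geq c|x|^{-\beta_0},\quad \alpha_0=\tfrac{N-m_1}{m_1-1},\;\beta_0=\tfrac{N-m_2}{m_2-1},
$$
valid for $|x|\geq 2$. Next, I would establish an \emph{iteration lemma}: for an $(H_m)$ operator, if $L_\mathcal{A} u\geq c|x|^{-\sigma}$ in an exterior domain with $m<\sigma<N$, then $u(x)\geq c|x|^{-(\sigma-m)/(m-1)}$ at infinity. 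This is the single-equation analogue of Proposition~\ref{p2.1} and is proved by testing on a radial sub-solution (equivalently, integrating the radial differential inequality $-(r^{N-1}A(|u'|)u')'\geq cr^{N-1-\sigma}$ twice, using the lower bound $A(t)\gtrsim t^{m-1}$ from $(H_m)$). Plugging the current lower bound for $v$ into the first line of \eqref{sysh} yields $L_\mathcal{A} u\geq c|x|^{a-q\beta_k}$, and the iteration lemma then gives a new exponent $\alpha_{k+1}=(q\beta_k-a-m_1)/(m_1-1)$; symmetrically $\beta_{k+1}=(s\alpha_k-b-m_2)/(m_2-1)$.

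The sequence $(\alpha_k,\beta_k)$ is generated by an affine map whose unique fixed point is $(\gamma,\xi)$. Under $qs>(m_1-1)(m_2-1)$, the linearised map is a contraction on the natural cone and $(\alpha_k,\beta_k)\uparrow(\gamma,\xi)$ monotonically, provided each exponent $\sigma_k=q\beta_k-a$ or $s\alpha_k-b$ stays in the range $(m_i,N)$ where the iteration lemma applies. If at some stage an exponent exits this range past $N$, a direct integration shows the would-be solution must already violate the basic integrability condition \eqref{c1} — giving a contradiction on the spot. Otherwise the iterates converge to $(\gamma,\xi)$, and after finitely many steps (or in the limit) we obtain, say, $u(x)\geq c|x|^{-\gamma'}$ for any $\gamma'>\gamma-\varepsilon$. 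The assumption $(m_1-1)\gamma\geq N-m_1$ then means $\gamma\geq\alpha_0$, so the iteration has pushed the decay of $u$ strictly beyond Proposition~\ref{p2}'s threshold; feeding this improved bound back into Proposition~\ref{p1} (applied to $L_\mathcal{A} u\geq|x|^a v^q$ with $\theta=m_1-1$, $\ell$ chosen so that $m_1-1-\theta=0$) produces an integral estimate $\int_{B_{2R}\setminus B_R}|x|^a v^q\leq CR^{N-m_1}$ that is inconsistent with $v\geq c|x|^{-\xi}$ once $(m_1-1)\gamma\geq N-m_1$; the symmetric case handles $(m_2-1)\xi\geq N-m_2$.

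The main obstacle is controlling the iteration bookkeeping: verifying that $\sigma_k\in(m_i,N)$ throughout (so the iteration lemma is applicable step by step), handling the borderline case when some $\sigma_k$ equals $N$ (which needs the logarithmic refinement of Proposition~\ref{p2.1}), and matching the eventual contradiction precisely at the boundary condition $(m_1-1)\gamma=N-m_1$ or $(m_2-1)\xi=N-m_2$ rather than only in the strict inequality case. I expect the equality case to require a separate argument via Proposition~\ref{p2.1} — one iteration to gain a logarithmic factor, after which the a priori estimate from Proposition~\ref{p1} closes.
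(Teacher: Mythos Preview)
The paper does not give its own proof of this proposition; it is simply quoted from \cite[Theorem~5.3]{BP2001}. So there is no in-paper argument to compare against, only the cited reference.

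Your overall strategy --- bootstrap the pointwise lower bounds of Proposition~\ref{p2} through the two equations of \eqref{sysh} via an iteration lemma of Proposition~\ref{p2.1} type --- is indeed the method used in \cite{BP2001}. But your analysis of the iteration dynamics is inverted. The two-step map $\alpha_k\mapsto\alpha_{k+2}$ is affine with linear coefficient $\dfrac{qs}{(m_1-1)(m_2-1)}>1$ (since $q>m_1-1$ and $s>m_2-1$), so the fixed point $(\gamma,\xi)$ is \emph{repelling}, not attracting; the iterates do not converge monotonically to $(\gamma,\xi)$ but move away from it. The hypothesis $(m_1-1)\gamma\geq N-m_1$ (i.e.\ $\gamma\geq\alpha_0$) places the starting data on the side of the fixed point from which the exponents $\alpha_k,\beta_k$ \emph{decrease}. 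The contradiction in \cite{BP2001} is reached after finitely many steps when some $\sigma_k$ exits the window $(m_i,N)$: either $\sigma_k\leq m_i$ forces $u$ (or $v$) to grow at infinity, or $\sigma_k= N$ triggers the logarithmic gain of Proposition~\ref{p2.1}, and one more pass then overshoots. Your proposed endgame via Proposition~\ref{p1} with $\theta=m_1-1$ does not close either: substituting $u\gtrsim|x|^{-\gamma'}$, $v\gtrsim|x|^{-\xi}$ into $\int_{B_{2R}\setminus B_R}|x|^a v^q u^{1-m_1}\leq CR^{N-m_1}$ and using the fixed-point relation $q\xi-a-m_1=(m_1-1)\gamma$ gives back an inequality that is \emph{satisfied}, not violated, when $\gamma'\approx\gamma$.
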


Several times in this paper we shall make use of the following lemma which provides basic estimates
for Riesz potentials. We state it here to avoid repetitive arguments in our proofs.
\begin{lemma}\label{lbas}
Let $\alpha\in (0,N)$.

\begin{enumerate}
\item[\rm (i)]
If $f\in L^1_{loc}(\R^N\setminus \overline{B}_1)$, $f\geq 0$, then there exists $C>0$ such that
$$ (I_\alpha*f)(x)\geq C|x|^{\alpha-N}\quad\mbox{ for any }x\in \R^N\setminus B_2. $$

\item[\rm (ii)]
If $f(x)\geq c|x|^{-\gamma}$ in $\R^N\setminus B_2$ for some $c,\gamma>0$, then, given $p>0$ we have
$$ \left\{
\begin{aligned}
&  (I_\alpha*f^p)(x)=\infty && \quad\mbox{ if } \; p\gamma \leq \alpha \\
&  (I_\alpha*f^p)(x)\geq C|x|^{\alpha-p\gamma} && \quad\mbox{ if } \; p\gamma> \alpha
\end{aligned}
\right. \quad\mbox{ in }\; \R^N\setminus B_2.
$$

\item[\rm (iii)]
If $f\in L^1(\R^N)\cap C(\R^N)$ and $\displaystyle \limsup_{|x|\to \infty}f(x)|x|^\beta<\infty$
for some $\beta>\alpha$, then $$ \left\{
\begin{aligned}
&\limsup_{|x|\to \infty} |x|^{\beta-\alpha} (I_\alpha\ast f)(x)<\infty &&\quad\mbox{ if }\; \alpha<\beta<N,\\
&\limsup_{|x|\to \infty} \frac{|x|^{N-\alpha}}{\log|x|} (I_\alpha\ast f)(x)<\infty  &&\quad\mbox{ if }\; \beta=N,\\
&\limsup_{|x|\to \infty} |x|^{N-\alpha} (I_\alpha\ast f)(x)<\infty&&\quad\mbox{ if }\; \beta>N.
\end{aligned}
\right.
$$
\end{enumerate}
\end{lemma}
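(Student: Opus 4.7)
All three parts are elementary splitting-of-domain arguments applied to the explicit kernel $I_\alpha(x-y)=A_\alpha|x-y|^{\alpha-N}$. For (i), pick any compact set $K\subset \R^N\setminus \overline{B}_1$ on which $\int_K f>0$ (such $K$ exists whenever $f\not\equiv 0$, which is the only nontrivial case). For $|x|\geq 2$ and $y\in K$ the triangle inequality yields $|x-y|\leq |x|+|y|\leq C|x|$, hence $I_\alpha(x-y)\geq C|x|^{\alpha-N}$, and integrating over $K$ gives the conclusion.

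For (ii), I would use the pointwise bound $f^p(y)\geq c^p|y|^{-p\gamma}$ on $\R^N\setminus B_2$. In the divergent regime $p\gamma\leq \alpha$ I would restrict the convolution integral to $|y|\geq R$ for $R>2|x|$, so that $|x-y|\leq 2|y|$; the integrand is then bounded below by $c\,|y|^{-p\gamma-(N-\alpha)}$, and passing to radial coordinates reduces the problem to showing that $\int_R^{\infty} r^{\alpha-p\gamma-1}\,dr=\infty$, which holds precisely under the stated hypothesis. In the regime $p\gamma>\alpha$ I would restrict instead to the ball $B_{|x|/2}(x)$, which for $|x|\geq 4$ sits inside $\R^N\setminus B_2$ and satisfies $|y|\leq (3/2)|x|$, so $f^p(y)\geq c|x|^{-p\gamma}$ there. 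The residual kernel integral equals $\int_{|z|\leq |x|/2}|z|^{\alpha-N}\,dz=C|x|^{\alpha}$, producing the lower bound $C|x|^{\alpha-p\gamma}$.

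For (iii), fix $R_0$ with $f(y)\leq M|y|^{-\beta}$ on $\{|y|\geq R_0\}$ (possible by the $\limsup$ hypothesis), and for $|x|\geq 2R_0$ decompose $\R^N$ into the four zones $\{|y|<R_0\}$, $\{R_0\leq|y|\leq |x|/2\}$, $\{|x|/2<|y|<2|x|\}$, $\{|y|\geq 2|x|\}$. On the first two zones $|x-y|\geq |x|/2$, so the kernel is bounded by $C|x|^{\alpha-N}$ and the contribution reduces to $|x|^{\alpha-N}\!\int|y|^{-\beta}\,dy$, worth $C|x|^{\alpha-\beta}$ when $\beta<N$, $C|x|^{\alpha-N}\log|x|$ when $\beta=N$, and $O(|x|^{\alpha-N})$ when $\beta>N$ (using $f\in L^1$ for the absorbing piece $|y|<R_0$). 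On the intermediate zone $|y|\sim|x|$, so pulling $f(y)\leq C|x|^{-\beta}$ out and using $\int_{|z|\leq 3|x|}|z|^{\alpha-N}\,dz=C|x|^\alpha$ gives $C|x|^{\alpha-\beta}$. On the far zone $|x-y|\geq |y|/2$, so the kernel is bounded by $C|y|^{\alpha-N}$, and $\int_{2|x|}^\infty r^{\alpha-\beta-1}\,dr$ converges (since $\beta>\alpha$) to $C|x|^{\alpha-\beta}$. Summing and picking the dominant contribution in each of the three regimes reproduces the three stated bounds; the only real point of care is the borderline case $\beta=N$, where the logarithmic factor arises solely from the middle-range integral $\int_{R_0}^{|x|/2}r^{-1}\,dr$.
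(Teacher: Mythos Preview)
Your argument is correct and, for parts (i) and (ii), essentially coincides with the paper's: the paper uses the specific annulus $\{3/2<|y|<2\}$ in (i) and the single far region $\{|y|\geq 2|x|\}$ for \emph{both} cases of (ii), whereas you use a generic compact $K$ in (i) and switch to the local ball $B_{|x|/2}(x)$ for the convergent case $p\gamma>\alpha$---harmless variations of the same idea. For part (iii) the paper gives no argument at all but simply cites \cite[Lemma~A.1]{MV2013}; your four-zone decomposition is the standard proof of that lemma and is correct (the only minor point is that your ball argument in (ii) literally covers $|x|\geq 4$, but the remaining compact range $2<|x|<4$ is handled by adjusting the constant, since the convolution is strictly positive there).
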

\begin{proof}
(i) For any $x\in \R^N\setminus B_2$ we have $$ (I_\alpha*f)(x)\geq C\int_{3/2< |y|<
2}\frac{f(y)}{|x-y|^{N-\alpha}}dy \geq C\int_{3/2< |y|< 2}\frac{f(y)}{|2x|^{N-\alpha}}dy =
\frac{C}{|x|^{N-\alpha}}. $$ (ii) We have $$ (I_\alpha*f^p)(x)\geq C\int_{|y|\geq
2|x|}\frac{|y|^{-p\gamma}}{|x-y|^{N-\alpha}}dy \geq C\int_{|y|\geq 2|x|} |y|^{\alpha-N-p\gamma} dy=
C\int_{2|x|}^{\infty}t^{\alpha-p \gamma}\frac{dt}{t}, $$ and the conclusion follows.

(iii) This was proved in \cite[Lemma A.1]{MV2013}.
\end{proof}

\section{Proof of Theorem \ref{thm2}}

(i) Assume first that $q= m-1$ which also implies $\alpha> N-m$. Then by Lemma \ref{l1} we deduce
$$ \int_{B_{2R}\setminus B_1}u^p dx\leq CR^{N-m-\alpha} \quad\mbox{ for all }R> 2. $$ Letting
$R\to \infty$ in the above estimate and using the fact that $\alpha> N-m$ it follows that
\eqref{go} has no positive solutions.

Assume next that $q< m-1$. By H\"older's inequality we estimate
\begin{equation}\label{pr1}
\int_{B_{2R}\setminus B_R} 1 \leq \Big(\int_{B_{2R}\setminus B_R}u^p\Big)^{\frac{m-1-q}{p+m-1-q}}
\Big(\int_{B_{2R}\setminus B_R}u^{q-m+1}\Big)^{\frac{p}{p+m-1-q}},
\end{equation}
which we rewrite as
\begin{equation*}
CR^N\leq \Big[\Big(\int_{B_{2R}\setminus B_R}u^p\Big)\Big(\int_{B_{2R}\setminus B_R}u^{q-m+1}\Big)
\Big]^{\frac{m-1-q}{p+m-1-q}}\Big(\int_{B_{2R}\setminus B_R}u^{q-m+1}\Big)^{\frac{p-m+1+q}{p+m-1-q}}.
\end{equation*}
Now, by Lemma \ref{l1} we deduce
\begin{equation*}
CR^N\leq C\big(R^{2N-m-\alpha}\big)^{\frac{m-1-q}{p+m-1-q}}
\Big(\int_{B_{2R}\setminus B_R}u^{q-m+1}\Big)^{\frac{p-m+1+q}{p+m-1-q}},
\end{equation*}
which yields
\begin{equation}\label{pr2}
\int_{B_{2R}\setminus B_R}u^{q-m+1} \geq CR^{N+\frac{(m+\alpha)(m-1-q)}{p-m+1+q}} \quad\mbox{ for }R> 2.
\end{equation}
Again by Lemma \ref{l1} we have
\begin{equation}\label{pr3}
\Big(\int_{B_{2R}\setminus B_1}u^{p}\Big)\Big(\int_{B_{2R}\setminus B_R}u^{q-m+1}\Big)
\leq CR^{2N-m-\alpha} \quad\mbox{ for all }R> 2.
\end{equation}
Therefore,
\begin{equation}\label{pr4}
\int_{B_{2R}\setminus B_R}u^{q-m+1}dx\leq \frac{CR^{2N-m-\alpha}}{\int_{B_{2R}\setminus B_1}u^{p}}
\leq \frac{CR^{2N-m-\alpha}}{\int_{B_{4}\setminus B_1}u^{p} }\leq CR^{2N-m-\alpha}.
\end{equation}
From \eqref{pr2} and \eqref{pr4} we deduce
\begin{equation*}
C_1 R^{N+\frac{(m+\alpha)(m-1-q)}{p-m+1+q}}\leq \int_{B_{2R}\setminus B_R}u^{q-m+1}
\leq C_2 R^{2N-m-\alpha} \quad\mbox{ for all }R> 2.
\end{equation*}
Since  $q< m-1-\frac{N-\alpha-m}{N}p$, the above inequality cannot hold for large $R> 2$. Hence,
\eqref{go} cannot have positive solutions.

(ii)  Assume $0< q= m-1 - \frac{N-m-\alpha}{N}p$. With a  similar approach as above we have
\begin{equation}\label{ee0}
C_1 R^{2N-m-\alpha} \leq \int_{B_{2R} \backslash B_R} u^{q-m+1}
\leq C_2 R^{2N-m-\alpha} \quad\mbox{ for all }R> 2.
\end{equation}
Since $$ \frac{p}{p+m-1-q} = \frac{N}{2N-m-\alpha}, $$ estimate \eqref{pr1} yields
\begin{equation*}
CR^N \leq \left( \int_{B_{2R} \backslash B_R} u^p \right)^{\frac{m-1-q}{p+m-1-q}}
\cdot CR^N \quad\mbox{ for all }R> 2.
\end{equation*}
Hence
\begin{equation*}
\int_{B_{2R} \backslash B_R} u^p \geq C \quad\mbox{ for all }R> 2,
\end{equation*}
which shows that $\int_{\R^N\setminus B_1} u^p = \infty$ and then $\int_{B_{2R}\setminus B_1} u^p
\rightarrow \infty$ as $r\rightarrow \infty$. Further, from \eqref{pr4} we have
\begin{equation*}
\int_{B_{2R}\setminus B_R}u^{q-m+1} \leq \frac{CR^{2N-m-\alpha}}{\int_{B_{2R}\setminus B_1}u^{p}}=
o(R^{2N-\alpha-m}) \quad\mbox{ as } R\rightarrow \infty,
\end{equation*}
which contradicts the first estimate in \eqref{ee0}.

(iii)  Assume first that $p+q= m-1$. By Lemma \ref{l1} we have
\begin{equation}\label{pq1}
\Big(\int_{B_{2R}\setminus B_1}u^p\Big)\Big(\int_{B_{2R}\setminus B_R}u^{-p}\Big)
\leq CR^{2N-m-\alpha} \quad\mbox{ for all }R> 2.
\end{equation}
On the other hand, by H\"older's inequality we deduce
\begin{equation}\label{pq1.2}
CR^{2N}= \Big(\int_{B_{2R}\setminus B_R} 1 \Big)^2\leq \Big(\int_{B_{2R}\setminus B_R}u^p\Big)
\Big(\int_{B_{2R}\setminus B_R}u^{-p}\Big) \quad\mbox{ for all }R> 2.
\end{equation}
Now, \eqref{pq1} and \eqref{pq1.2} cannot hold for $R>2$ sufficiently large. This shows that
\eqref{go} cannot have positive solutions.

\medskip

Assume now that $p+q< m-1$. We apply H\"older inequality to derive \eqref{pr1} which we may rewrite
as
\begin{equation*}
CR^N\leq \Big(\int_{B_{2R}\setminus B_R}u^p\Big)^{\frac{m-1-p-q}{p+m-1-q}}
\Big[\Big(\int_{B_{2R}\setminus B_R}u^{p}\Big)
\Big(\int_{B_{2R}\setminus B_R}u^{q-m+1}\Big)\Big]^{\frac{p}{p+m-1-q}}.
\end{equation*}
Using the estimate in Lemma \ref{l1}  we find
\begin{equation*}
R^N \leq C\Big(R^{2N-m-\alpha}\Big)^{\frac{p}{p+m-1-q}}
\Big(\int_{B_{2R}\setminus B_R}u^p\Big)^{\frac{m-1-p-q}{p+m-1-q}} \quad\mbox{ for all }R> 2,
\end{equation*}
 which implies
\begin{equation}\label{pr5}
\int_{B_{2R}\setminus B_R}u^p \geq CR^{N+\frac{p(m+\alpha)}{m-1-p-q}} \quad\mbox{ for all }R> 2.
\end{equation}
On the other hand, equation \eqref{c1} yields
\begin{equation*}
\int_{|x|> 1}\frac{u^{p}(x)}{|x|^{N-\alpha}}dx< \infty,
\end{equation*}
which further implies
\begin{equation*}
R^{\alpha-N}\int_{B_{2R}\setminus B_R}u^p \leq C \quad\mbox{ for all }R> 2.
\end{equation*}
Combining this with \eqref{pr5} we find
\begin{equation*}
C_1 R^{N-\alpha}\geq \int_{B_{2R}\setminus B_R}u^p
\geq C_2 R^{N+\frac{p(m+\alpha)}{m-1-p-q}} \quad\mbox{ for all }R> 2,
\end{equation*}
which gives a contradiction as $N-\alpha< N+\frac{p(m+\alpha)}{m-1-p-q}$.

\qed

\section{Proof of Theorem \ref{thm1}}

Assume by contradiction that $u$ is a positive solution of \eqref{go}.

(i)  By Proposition \ref{p2} we have $u\geq c$ in $\R^N\setminus B_2$, for some constant $c> 0$.
Therefore,
\begin{equation*}
\int_{|y|> 2}\frac{u^{p}(y)}{1+|y|^{N-\alpha}}dy\geq c\int_{|y|> 2}\frac{dy}{2|y|^{N-\alpha}}= \infty,
\end{equation*}
which contradicts \eqref{c1}.

(ii1) By Proposition \ref{p2} we deduce $u\geq c|x|^{-\frac{N-m}{m-1}}$ in $\R^N\setminus B_2$, for
some constant $c> 0$. Since $p\leq \frac{\alpha(m-1)}{N-m}$, by Lemma \ref{lbas}(ii) we have
$(I_\alpha*u^p)(x)=\infty$ for all $x\in \R^N\setminus B_2$, which contradicts the fact that
$(I_\alpha*u^p)u^q \in L^{1}_{loc}(\Omega)$.

(ii2) Assume $m-1< q\leq \frac{\alpha(m-1)}{N-m}$ which further yields $\alpha> N-m$. By Lemma
\ref{lbas}(i) we have $$ (I_\alpha*u^p)(x)\geq C|x|^{\alpha-N}\quad\mbox{ in }\R^N\setminus B_2. $$
Thus, $u$ satisfies
\begin{equation*}
L_{\mathcal{A}}u\geq C|x|^{\alpha-N}u^q \quad\mbox{ in } \R^N \setminus B_2.
\end{equation*}
We further apply Proposition \ref{p3}(i) with $\sigma= \alpha-N$. Thus, if $$ m-1<q\leq
\frac{(N+\sigma)(m-1)}{N-m}= \frac{\alpha(m-1)}{N-m}, $$ then \eqref{go} has no positive solutions.

(ii3) Assume first that $p+q< \frac{(N+\alpha)(m-1)}{N-m}$. Proceeding as in \cite[Proposition 4.6]{MV2013} we
use H\"older inequality to estimate
\begin{equation*}
\Big(\int_{B_{2R}\setminus B_R}u^p\Big)\Big(\int_{B_{2R}\setminus B_R}u^{q-m+1}\Big)\geq
\Big(\int_{B_{2R}\setminus B_R}u^{\frac{p+q-m+1}{2}}\Big)^2 \quad\mbox{ for all } R> 2.
\end{equation*}
Using Lemma \ref{l1} together with the estimate $u\geq C|x|^{-\frac{N-m}{m-1}}$ in $\R^N\setminus
B_2$ we find
\begin{equation*}
C_1 R^{2N-m-\alpha}\geq \Big(\int_{B_{2R}\setminus B_R}u^{\frac{p+q-m+1}{2}}\Big)^2
\geq C_2 R^{2N-\frac{(p+q-m+1)(N-m)}{m-1}} \quad\mbox{ for all }R> 2.
\end{equation*}
However, this is a contradiction as $2N-m-\alpha< 2N-\frac{(p+q-m+1)(N-m)}{m-1}$.

\medskip

Assume now that $p+q= \frac{(N+\alpha)(m-1)}{N-m}$. If $q<0$, the nonexistence of a positive solution follows from Theorem \ref{thm2}(i). Hence, it remains to discuss the case $q\geq 0$.
By Proposition \ref{p2} we deduce $u\geq
c|x|^{-\frac{N-m}{m-1}}$ in $\R^N\setminus B_2$, for some constant $c>0$. If $p\frac{N-m}{m-1}\leq
\alpha$, by Lemma \ref{lbas}(ii) we find $(I_\alpha*u^p)=\infty$ in $\R^N\setminus B_1$,
contradiction. Assume in the following that $p\frac{N-m}{m-1}> \alpha$. By the last estimate in
Lemma \ref{lbas}(ii) it follows that $$ (I_\alpha*u^p)(x) \geq  C
|x|^{\alpha-p\frac{N-m}{m-1}}\quad\mbox{ in } \R^N\setminus B_2. $$ Thus, $u$ satisfies $$
L_{\mathcal{A}}u\geq (I_\alpha*u^p)u^q\geq C|x|^{\alpha-(p+q)\frac{N-m}{m-1}}=C|x|^{-N} \quad\mbox{
in } \R^N \setminus B_2. $$ We now use Proposition \ref{p2.1} to deduce
\begin{equation}\label{logg}
u(x) \geq c|x|^{-\frac{N-m}{m-1}} \big(\ln |x|\big)^{\frac{1}{m-1}} \quad\mbox{ in } \R^N\setminus B_2,
\end{equation}
for some $c>0$. Finally, by \eqref{logg}, Lemma \ref{l1} and H\"older's inequality, for $R>2$ we
have $$
\begin{aligned}
CR^{2N-m-\alpha}&\geq\Big(\int_{B_{2R}\setminus B_R}u^p\Big)\Big(\int_{B_{2R}\setminus B_R}u^{q-m+1}\Big)\\
&\geq \Big(\int_{B_{2R}\setminus B_R}u^{\frac{p+q-m+1}{2}}\Big)^2 \\
&\geq CR^{2N-m-\alpha}\big(\ln R\big)^{\frac{m+\alpha}{N-m}},
\end{aligned}
$$
which is a contradiction for $R>2$ large.

\section{Proof of Theorem \ref{thm3}}

If $u=u(|x|)=u(r)$ is a radial function, one has
\begin{align} \label{iden}
\text{div} ( A(|\nabla u|) \nabla u)
= u''(r) \cdot \Bigl[ A'(|u'(r)|) \cdot |u'(r)| + A(|u'(r)|) \Bigr]
+ \frac{N-1}{r} \cdot A(|u'(r)|) \cdot u'(r).
\end{align}

We look for radial solutions of the form $u(r)= \e (1+r)^{-\gamma}$ for some $\e,\gamma>0$ to be
chosen later. Note that $u(r)$ is decreasing and let $t=|u'(r)|$ for convenience.  According to
\eqref{iden} we have
\begin{align*}
\text{div} ( A(|\nabla u|) \nabla u)
&= u''(r) \cdot \Bigl[ t A'(t) + A(t) \Bigr] - \frac{N-1}{r} \cdot tA(t) \\
&= \frac{tA(t)}{r} \cdot \left[ -\frac{ru''(r)}{u'(r)} \cdot \frac{t A'(t) + A(t)}{A(t)} + 1-N \right] \\
&= \frac{tA(t)}{r} \cdot \left[ \frac{(\gamma+1)r}{r+1} \cdot \frac{t A'(t) + A(t)}{A(t)} + 1-N \right].
\end{align*}
Since $t = \e\gamma (1+r)^{-\gamma-1} \leq \e\gamma$, it is clear that $t$ converges to zero
uniformly as $\e\to 0$.  If we now assume that $\e$ is sufficiently small, our assumption
\eqref{upper} becomes applicable and
\begin{align*}
\text{div} ( A(|\nabla u|) \nabla u)
&\leq \frac{tA(t)}{r} \cdot \left[ (\gamma+1)(m-1) + 1-N \right] \\
&= \frac{tA(t)}{r} \cdot \left[ \gamma(m-1) + m-N \right].
\end{align*}
As long as $0<\gamma<\frac{N-m}{m-1}$, the constant in the square brackets is negative, so we get
\begin{equation} \label{est1}
L_{\mathcal{A}}u= -\text{div} ( A(|\nabla u|) \nabla u) \geq \frac{C_0 tA(t)}{r} \geq
\frac{C_1 t^{m-1}}{1+r} = C_2 \e^{m-1} (1+r)^{-\gamma(m-1)-m}
\end{equation}
for any $0<\gamma<\frac{N-m}{m-1}$ and all sufficiently small $\e>0$.

When it comes to part (i), one may choose the parameter $\gamma>0$ so that
\begin{equation} \label{gamma1}
\frac Np < \gamma < \frac{N-m}{m-1}.
\end{equation}
Since $u(r)= \e (1+r)^{-\gamma}$ with $\gamma p> N > \alpha$, it follows by Lemma \ref{lbas}(iii)
that
\begin{equation} \label{est2}
(I_\alpha \ast u^p) \cdot u^q \leq C_3 \e^p (1+r)^{\alpha - N} \cdot u^q =
C_4 \e^{p+q} (1+r)^{\alpha - N -\gamma q}.
\end{equation}
Using our estimate \eqref{est1} and the fact that $q=m-1$, we also have
\begin{equation*}
L_{\mathcal{A}}u \geq C_2 \e^{m-1} (1+r)^{-\gamma(m-1)-m} =
C_2 \e^{m-1} (1+r)^{-\gamma q -m}.
\end{equation*}
Since $\alpha = N-m$ and $p+q>m-1$, one may then combine the last two estimates to get
\begin{align*}
L_{\mathcal{A}}u
&\geq C_2 \e^{m-1} (1+r)^{-\gamma q -m} \\
&\geq C_4 \e^{p+q} (1+r)^{-\gamma q + \alpha-N} \geq (I_\alpha \ast u^p) \cdot u^q
\end{align*}
for all sufficiently small $\e>0$.  This completes the proof of part (i).

When it comes to part (ii), one may choose the parameter $\gamma>0$ so that
\begin{equation} \label{gamma2}
\max \left\{ \frac{\alpha + m}{p+q-m+1}, \frac{\alpha}{p} \right\} < \gamma < \frac{N}{p} \leq \frac{N-m}{m-1}.
\end{equation}
Since $\gamma<\frac{N-m}{m-1}$, our estimate \eqref{est1} is still valid.  Since $\alpha < \gamma p
<N$, however, one has
\begin{equation} \label{est3}
(I_\alpha \ast u^p) \cdot u^q \leq C_3 \e^p (1+r)^{\alpha - \gamma p} \cdot u^q
= C_4 \e^{p+q} (1+r)^{\alpha - \gamma (p+q)}
\end{equation}
by Lemma \ref{lbas}(iii).  Using our estimate \eqref{est1} along with \eqref{gamma2}, we conclude
that
\begin{align} \label{est4}
L_{\mathcal{A}}u
&\geq C_2 \e^{m-1} (1+r)^{-\gamma(m-1)-m} \notag \\
&\geq C_4 \e^{p+q} (1+r)^{\alpha - \gamma (p+q)} \geq (I_\alpha \ast  u^p) \cdot u^q
\end{align}
for all small enough $\e>0$, since $p+q>m-1$.  This completes the proof of part (ii).

\medskip

\section{Proof of Theorem \ref{thm4}}
We proceed as in the previous theorem, this time  we look for a solution of the form
\begin{equation} \label{u|def}
u(r) = \e (1+r)^{-\gamma} \cdot \left( 1 - \frac{k}{1+ \log (1+r)} \right),
\end{equation}
where $\gamma= \frac{N-m}{m-1}>0$ and $\e,k>0$ are sufficiently small.  As one can easily check,
\begin{align} \label{d1|eq}
u'(r) &= - \frac{\gamma u(r)}{1+r} + \frac{\e k (1+r)^{-\gamma -1} }{(1 + \log (1+r))^2} \\
&= -\e \cdot \frac{\gamma (\log (1+r))^2 + \gamma (2-k) \log (1+r) +
    [\gamma - k(\gamma +1)]}{(1+r)^{\gamma +1} \cdot (1 + \log (1+r))^2}. \notag
\end{align}
If we take $0<k < \frac{\gamma}{\gamma +1}$, then the numerator is the sum of non-negative terms,
so $u(r)$ is decreasing and we also have the estimate
\begin{equation} \label{d1|est}
\frac{C_0\e}{(1+r)^{\gamma +1} \cdot (1 + \log (1+r))^2} \leq |u'(r)| \leq \frac{C_1 \e}{(1+r)^{\gamma +1}}
\end{equation}
for some fixed constants $C_0,C_1>0$.  Next, we differentiate \eqref{d1|eq} to find that
\begin{align} \label{d2|eq}
u''(r) + \frac{\gamma u'(r)}{1+r} &= \frac{\gamma u(r)}{(1+r)^2}
- \frac{\e k(\gamma +1) (1+r)^{-\gamma-2} }{(1+ \log (1+r))^2}
- \frac{2\e k (1+r)^{-\gamma-2} }{(1+ \log (1+r))^3}.
\end{align}
Letting $z= \log (1+r)$ for convenience, one may express the last equation in the form
\begin{align*}
u''(r) + \frac{\gamma u'(r)}{1+r}
&= \frac{\gamma \e (1-k+z)}{(1+r)^{\gamma+2}(1+z)} - \frac{\e k(\gamma +1)}{(1+r)^{\gamma+2} (1+z)^2} -
\frac{2\e k}{(1+r)^{\gamma+2} (1+z)^3} \\
&= \e\cdot \frac{\gamma z^3 + \gamma (3-k) z^2 + (3\gamma - 3\gamma k - k) z +
    (\gamma - 2\gamma k - 3k)}{(1+r)^{\gamma+2} (1+z)^3}.
\end{align*}
If we take $0< k < \frac{\gamma}{2\gamma +3} < \frac{3\gamma}{3\gamma+1}$, then the numerator is
the sum of non-negative terms.  Assuming that $k>0$ is sufficiently small, we thus have
\begin{equation}
u''(r) \geq -\frac{\gamma u'(r)}{1+r} \geq 0.
\end{equation}
On the other hand, one may combine equations \eqref{u|def}, \eqref{d1|eq} and \eqref{d2|eq} to find
that
\begin{equation} \label{u|ode}
u''(r) + (\gamma +1) \cdot \frac{u'(r)}{1+r} =
-\frac{\e k(\gamma\log (1+r) + \gamma + 2)}{(1+r)^{\gamma +2} \cdot (1 + \log (1+r))^3} \leq 0.
\end{equation}

Next, we employ our identity \eqref{iden} which gives
\begin{align*}
\text{div} ( A(|\nabla u|) \nabla u)
&= u''(r) \cdot \Bigl[ t A'(t) + A(t) \Bigr] + \frac{N-1}{r} \cdot A(t) \cdot u'(r) \\
&= A(t) \cdot \left[ u''(r) \cdot \frac{t A'(t) + A(t)}{A(t)} + \frac{N-1}{r} \cdot u'(r) \right]
\end{align*}
with $t= |u'(r)|$.  If we let $\e$ approach zero, then $t$ converges to zero uniformly by
\eqref{d1|est} and our assumption \eqref{upper} becomes applicable.  Since $u''(r)\geq 0$ and
$u'(r)\leq 0$, we get
\begin{equation}\label{t1}
\text{div} ( A(|\nabla u|) \nabla u)
\leq (m-1) A(t) \cdot \left[ u''(r) + \frac{N-1}{m-1} \cdot \frac{u'(r)}{1+r} \right].
\end{equation}
Since $\gamma = \frac{N-m}{m-1}$, we have $\frac{N-1}{m-1} = \gamma + 1$ and  \eqref{u|ode}
ensures that
\begin{equation}\label{t2}
\begin{aligned}
u''(r) + \frac{N-1}{m-1} \cdot \frac{u'(r)}{1+r}
&= -\frac{\e k(\gamma\log (1+r) + \gamma + 2)}{(1+r)^{\gamma +2} \cdot (1 + \log (1+r))^3} \\
&\leq -\frac{C_2 \e}{(1+r)^{\gamma +2} \cdot (1 + \log (1+r))^2}
\end{aligned}
\end{equation}
for some fixed constant $C_2>0$.  In view of \eqref{t1} and \eqref{t2}, we  have
\begin{align*}
L_{\mathcal{A}}u&=-\text{div} ( A(|\nabla u|) \nabla u)\\
&\geq \frac{C_2 \e (m-1) A(t)}{(1+r)^{\gamma +2} \cdot (1 + \log (1+r))^2} \\
&\geq \frac{C_3 \e \cdot |u'(r)|^{m-1}}{(1+r)^{\gamma +2} \cdot (1 + \log (1+r))^2 \cdot |u'(r)|}
\end{align*}
for some fixed constant $C_3>0$.  Using the two estimates in \eqref{d1|est}, we conclude that
\begin{align} \label{est5}
L_{\mathcal{A}}u
\geq \frac{C_4 \e^{m-1} (1+r)^{-\gamma(m-1) -m} }{(1 + \log (1+r))^{2m}} =
\frac{C_4 \e^{m-1} (1+r)^{-N}}{(1 + \log (1+r))^{2m}}
\end{align}
for all sufficiently small $\e>0$ because $\gamma= \frac{N-m}{m-1}$ by above.

In order to estimate the convolution term, we note that $\gamma p = \frac{p(N-m)}{m-1} > \alpha$ by
assumption and that $u(r) \leq \e (1+r)^{-\gamma}$ by \eqref{u|def}.  According to Lemma
\ref{lbas}(iii), this implies
\begin{equation} \label{est6}
(I_\alpha \ast u^p) u^q \leq
\left\{ \begin{array}{lc}
C_5 \e^{p+q} (1+r)^{\alpha - \gamma(p+q)} &\text{if \;}\,\gamma p<N, \\
C_5 \e^{p+q} (1+r)^{\alpha -N -\gamma q} (1+\log (1+r)) &\text{if \;}\,\gamma p=N, \\
C_5 \e^{p+q} (1+r)^{\alpha -N -\gamma q} &\text{if \;}\, \gamma p>N.
\end{array} \right.
\end{equation}
Moreover, the three conditions in \eqref{three} can also be expressed in the form
\begin{equation}
\gamma p > \alpha, \qquad \gamma q > \alpha \quad\mbox{ and }\quad  \gamma(p+q) > \alpha + N.
\end{equation}

\noindent \textbf{Case 1:}  $\gamma p < N$. Then, the estimates \eqref{est5} and \eqref{est6} yield
\begin{align*}
L_{\mathcal{A}}u
&\geq \frac{C_4 \e^{m-1} (1+r)^{-N}}{(1 + \log (1+r))^{2m}} \\
&\geq C_5 \e^{p+q} (1+r)^{\alpha - \gamma(p+q)} \geq (I_\alpha \ast u^p) u^q
\end{align*}
for all sufficiently small $\e>0$ because $\gamma(p+q) > \alpha +N$ and $p+q>m-1$.

\noindent \textbf{Case 2:} $\gamma p \geq N$. In this situation, estimates \eqref{est5} and \eqref{est6} imply
\begin{align*}
L_{\mathcal{A}}u
&\geq \frac{C_4 \e^{m-1} (1+r)^{-N}}{(1 + \log (1+r))^{2m}} \\
&\geq C_5 \e^{p+q} (1+r)^{\alpha - N - \gamma q} \cdot (1+ \log(1+r))
\geq (I_\alpha \ast u^p) u^q
\end{align*}
for all sufficiently small $\e>0$ because $\gamma q > \alpha$ and $p+q>m-1$.

In either case then, the function $u$ given in \eqref{u|def} is a bounded radial solution of
\eqref{go}.

\medskip

\section{Proofs of Theorems \ref{thm5} - \ref{thm8}}

\noindent{\bf Proof of Theorem \ref{thm5}.}
We assume that $\Omega$ is contained in the ball $B_R$ for some $R>0$ and we look for solutions
of the form $u(r) = \delta(1 + R-r)$ for some small enough $\delta>0$.  According to
\eqref{iden},
\begin{align*}
\text{div} ( A(|\nabla u|) \nabla u) = \frac{N-1}{r} \cdot A(|u'(r)|) \cdot u'(r)
= -\frac{N-1}{r} \cdot \delta A(\delta).
\end{align*}
As long as $\delta>0$ is sufficiently small, one may employ \eqref{con1} to find
\begin{equation} \label{est7}
L_{\mathcal{A}}u \geq \frac{N-1}{r} \cdot C_0 \delta^{m-1} \geq
C_1 \delta^{m-1}
\end{equation}
for all $r<R$.  On the other hand, $u(r)\leq \delta(1+R)$ by definition, so we also have
\begin{equation*}
I_\alpha \ast u^p \leq C_\alpha \int_{|y|<R} \frac{u(y)^p \,dy}{|x-y|^{N-\alpha}}
\leq C_2 \delta^p \int_{|y|<R} \frac{dy}{|x-y|^{N-\alpha}}.
\end{equation*}
Letting $I_1$ be the part of the integral with $|x-y|\leq |y|$ and $|y|<R$, we have
\begin{equation*}
I_1 \leq C_2 \delta^p \int_{|x-y|<R} \frac{dy}{|x-y|^{N-\alpha}} = C_3 \delta^p.
\end{equation*}
Letting $I_2$ be the remaining part with $|x-y|\geq |y|$ and $|y|<R$, we similarly have
\begin{equation*}
I_2 \leq C_2 \delta^p \int_{|y|<R} \frac{dy}{|y|^{N-\alpha}} = C_3 \delta^p.
\end{equation*}
We now combine the last two estimates to derive
\begin{equation*}
(I_\alpha \ast u^p) \cdot u^q \leq 2C_3 \delta^p \cdot u^q =
2C_3 \delta^{p+q} (1+R-r)^q \leq C_4 \delta^{p+q}
\end{equation*}
for all $r<R$.  Since $p+q>m-1$ by assumption, it now follows by \eqref{est7} that
\begin{equation*}
L_{\mathcal{A}}u \geq C_1 \delta^{m-1} \geq C_4 \delta^{p+q} \geq
(I_\alpha \ast u^p) \cdot u^q
\end{equation*}
for all $r<R$ and all sufficiently small $\delta>0$.  This also completes the proof.

\medskip

\noindent{\bf Proof of Theorem \ref{thm6}.}
We assume that $\Omega$ is contained in the interval $(0,R)$ and we look for solutions of the form
$u(r) = \delta \log(1+R+r)$ for some small enough $\delta>0$.  Since $N=1$, we have
\begin{align*}
\text{div} ( A(|\nabla u|) \nabla u)
&= u''(r) \cdot \bigl[ t A'(t) + A(t) \bigr]
= -\frac{\delta}{(1+R+r)^2} \cdot \bigl[ t A'(t) + A(t) \bigr],
\end{align*}
where $t = |u'(r)| = \delta/(1+R+r)$.  Note that $t$ converges to zero uniformly in $\Omega$ as
$\delta \to 0$.  Assuming that $\delta$ is sufficiently small, we may thus conclude that
\begin{equation*}
L_{\mathcal{A}}u=-\text{div} ( A(|\nabla u|) \nabla u) \geq \frac{\delta \cdot C_0 \delta^{m-2}}{(1+R+r)^2}
\geq C_1 \delta^{m-1}
\end{equation*}
for all $r<R$.  On the other hand, $u(r)\leq \delta \log(1+2R)$ by definition, so we also have
\begin{equation*}
I_\alpha \ast u^p \leq C_2 \int_{|y|<R} \frac{u(y)^p \,dy}{|x-y|^{n-\alpha}} \leq 2C_3 \delta^p
\end{equation*}
as in the previous proof.  Since $\log(1+R) \leq u(r)/\delta\leq \log(1+2R)$, this gives
\begin{equation*}
(I_\alpha \ast u^p) \cdot u^q \leq 2C_3 \delta^p \cdot u^q \leq C_4 \delta^{p+q}
\end{equation*}
and the result follows as before because $p+q>m-1$ by assumption.

\medskip

\noindent{\bf Proof of Theorem \ref{thm7}.}
We proceed as in the proof of Theorem \ref{thm5}, but we take $u(r)= L(1+R-r)$ for some large
enough $L>0$. Using the same approach as before, we find that
\begin{align*}
-\text{div} ( A(|\nabla u|) \nabla u) = \frac{N-1}{r} \cdot L A(L) \geq C_1 L^{m-1}
\end{align*}
for all $r<R$.  On the other hand, the convolution term satisfies the estimate
\begin{equation*}
(I_\alpha \ast u^p) \cdot u^q \leq C_4 L^{p+q}
\end{equation*}
as before.  Since $p+q<m-1$, this yields a solution for all large enough $L>0$.

\medskip

\noindent{\bf Proof of Theorem \ref{thm8}.}
We proceed as in the proof of Theorem \ref{thm6}, but we take $u(r)= L\log(1+R+r)$ for some large
enough $L>0$.  Since the argument is very similar, we omit the details.

\medskip

\section{Proofs of Theorems \ref{thm9} - \ref{thm11}}

This section is devoted to the proofs of Theorems \ref{thm9}, \ref{thm10} and \ref{thm11}.

\noindent {\bf Proof of Theorem \ref{thm9}.}
(i) Assume $(u, v)$ is a positive solution of \eqref{sys1}. Let $0\leq \theta_1,\theta_2<1$ be such
that by letting $a= q+r\theta_1$, $b= s+p\theta_2$ we have
\begin{equation}\label{s7}
r> m_1-1-a\geq 0 \;\mbox{ and } \; p> m_2-1-b\geq 0.
\end{equation}
Note that if $m_1-1=q$ (resp. $m_2-1=s$) we take $\theta_1= 0$ (resp. $\theta_2=0$).

Take $R> 2$ and $\varphi \in C_{c}^{\infty}(\R^N)$, $0\leq \varphi \leq 1$ such that
\begin{equation}\label{RR}
{\rm supp\,}\varphi \subset \Omega_R:= B_{4R}\setminus B_{R/2}\,,
\;\varphi \equiv 1\mbox{ in }B_{2R}\setminus B_{R}.
\end{equation}
By Proposition \ref{p1} we have
\begin{equation}\label{s8}
\int_{\Omega_R}(I_\alpha*v^p)u^{q-a}\varphi\leq CR^{N-m_1-\frac{m_1-1-a}{l_1}N}
\Big(\int_{\Omega_R}u^{\ell_1}\Big)^{\frac{m_1-1-a}{\ell_1}} \quad\mbox{ for } \ell_1> m_1-1-a
\end{equation}
and
\begin{equation}\label{s9}
\int_{\Omega_R}(I_\beta*u^r)v^{s-b}\varphi\leq CR^{N-m_2-\frac{m_2-1-b}{\ell_2}N}
\Big(\int_{\Omega_R}v^{\ell_2}\Big)^{\frac{m_2-1-b}{\ell_2}} \quad\mbox{ for } \ell_2> m_2-1-b.
\end{equation}
Also, for $x\in \Omega_R$ we estimate
\begin{equation}\label{s10}
(I_\alpha*v^p)(x)\geq CR^{\alpha-N}\int_{B_{4R}\setminus B_{1}}v^{p}(y) dy,
\end{equation}
and
\begin{equation}\label{s11}
(I_\beta*u^r)(x)\geq CR^{\beta-N}\int_{B_{4R}\setminus B_{1}}u^{r}(y) dy.
\end{equation}
From \eqref{s8} and \eqref{s10} we find
\begin{equation}\label{s12}
\Big(\int_{B_{4R}\setminus B_1}v^p\Big)\Big(\int_{\Omega_R}u^{q-a}\varphi \Big)
\leq CR^{2N-m_1-\alpha-\frac{m_1-1-a}{\ell_1}N} \Big(\int_{\Omega_R}u^{\ell_1}\Big)^{\frac{m_1-1-a}{\ell_1}},
\end{equation}
and similarly, from \eqref{s9} and \eqref{s11} we get
\begin{equation}\label{s13}
\Big(\int_{B_{4R}\setminus B_1}u^r\Big)\Big(\int_{\Omega_R}v^{s-b}\varphi \Big)
\leq CR^{2N-m_2-\beta-\frac{m_2-1-b}{\ell_2}N} \Big(\int_{\Omega_R}v^{\ell_2}\Big)^{\frac{m_2-1-b}{\ell_2}}.
\end{equation}
Now, take $\ell_1= r$, $\ell_2= p$ in \eqref{s12} and \eqref{s13} respectively and observe that
$\ell_1> m_1-1-a$ and $\ell_2> m_2-1-b$ due to \eqref{s7}. We next multiply \eqref{s12} and
\eqref{s13} to obtain
\begin{equation}\label{s14}
\begin{aligned}
\Big(\int_{B_{4R}\setminus B_1}u^r\Big)\Big(\int_{\Omega_R}u^{-r\theta_1}\varphi \Big)
\Big(\int_{B_{4R}\setminus B_1}v^p\Big)&\Big(\int_{\Omega_R}v^{-p\theta_2}\varphi \Big)\\
&\leq CR^{\gamma}\Big(\int_{\Omega_R}u^{r}\Big)^{\frac{m_1-1-q}{r}-\theta_1}
\Big(\int_{\Omega_R}v^{p}\Big)^{\frac{m_2-1-s}{p}-\theta_2},
\end{aligned}
\end{equation}
where
\begin{equation*}
\gamma= 4N-(m_1+m_2+\alpha+\beta)-N\Big(\frac{m_1-1-q}{r}-\theta_1\Big)-N\Big(\frac{m_2-1-s}{p}-\theta_2\Big).
\end{equation*}
By H\"{o}lder's inequality we have
\begin{equation}\label{s16}
\Big(\int_{\Omega_R}u^r\Big)^{\theta_1}\Big(\int_{\Omega_R}u^{-r\theta_1}\varphi \Big)\geq
\Big(\int_{\Omega_R}\varphi^{\frac{1}{\theta_1+1}} \Big)^{\theta_1+1}\geq CR^{N(\theta_1+1)},
\end{equation}
and
\begin{equation}\label{s17}
\Big(\int_{\Omega_R}v^p\Big)^{\theta_2}\Big(\int_{\Omega_R}v^{-p\theta_2}\varphi \Big)\geq
\Big(\int_{\Omega_R}\varphi^{\frac{1}{\theta_2+1}} \Big)^{\theta_2+1}\geq CR^{N(\theta_2+1)}.
\end{equation}
Using  \eqref{s16}-\eqref{s17} in \eqref{s14} we find
\begin{equation}\label{s18}
\Big(\int_{B_{4R}\setminus B_1}u^r\Big)^{1-\theta_1}\Big(\int_{B_{4R}\setminus B_1}v^p\Big)^{1-\theta_2}
\leq CR^{\sigma}\Big(\int_{\Omega_R}u^{r}\Big)^{\frac{m_1-1-q}{r}-\theta_1}
\Big(\int_{\Omega_R}v^{p}\Big)^{\frac{m_2-1-s}{p}-\theta_2},
\end{equation}
where
\begin{equation}\label{s19}
\sigma= 2N-(m_1+m_2+\alpha+\beta)-N\Big(\frac{m_1-1-q}{r}\Big)-N\Big(\frac{m_2-1-s}{p}\Big).
\end{equation}
From \eqref{s18} we deduce
\begin{equation}\label{s20}
\Big(\int_{B_{4R}\setminus B_1}u^{r}\Big)^{1-\frac{m_1-1-q}{r}}
\Big(\int_{B_{4R}\setminus B_1}v^{p}\Big)^{1-\frac{m_2-1-s}{p}}\leq CR^{\sigma} \quad\mbox{ for all } R> 2.
\end{equation}

\medskip

{\bf Case 1:} $\sigma< 0$. From \eqref{s2} we have
\begin{equation*}
p\geq m_2-1-s\geq 0 \mbox{ and } r\geq m_1-1-q\geq 0.
\end{equation*}
If $r> m_1-1-q$, then passing to the limit with $R\rightarrow \infty$ in \eqref{s20} we obtain
\begin{equation*}
\left\{
\begin{aligned}
\Big(\int_{\R^{N}\setminus B_1}u^{r}\Big)^{1-\frac{m_1-1-q}{r}}
\Big(\int_{\R^N\setminus B_1}v^{p}\Big)^{1-\frac{m_2-1-s}{p}}&= 0 &&\quad\mbox{ if } p> m_2-1-s, \\
\int_{\R^{N}\setminus B_1}u^{r}&= 0  &&\quad\mbox{ if } p= m_2-1-s.
\end{aligned}
\right.
\end{equation*}
This clearly contradicts the fact that $u, v$ are positive.

\medskip

Similarly, if $p> m_2-1-s$ we find $\int_{\R^{N}\setminus B_1}v^{p}= 0$  which is a contradiction
since $v$ is positive.

\medskip

Hence, $r= m_1-1-q$ and $p= m_2-1-s$. But now \eqref{s20} yields $1\leq CR^{\sigma}$ for all $R> 2$
which is again impossible if we let $R\rightarrow \infty$ (since $\sigma< 0$).

\medskip

{\bf Case 2:} $\sigma= 0$. From the definition of $\sigma$ in \eqref{s19} we deduce that the
equalities $r= m_1-1-q$ and $p= m_2-1-s$ cannot hold simultaneously. Assume for instance that $r>
m_1-1-q$. Passing to the limit with $R\rightarrow \infty$ in \eqref{s20} we find
\begin{equation*}
\left\{
\begin{aligned}
\int_{\R^{N}\setminus B_1}u^{r}&< \infty &&\quad\mbox{ if } p= m_2-1-s, \\
\int_{\R^{N}\setminus B_1}u^{r}&< \infty, \;\; \int_{\R^{N}\setminus B_1}v^{p}< \infty
&&\quad\mbox{ if } p> m_2-1-s.
\end{aligned}
\right.
\end{equation*}
In particular, as $R\rightarrow \infty$ we have
\begin{equation*}
\left\{
\begin{aligned}
\int_{\Omega_R}u^{r}&\rightarrow 0 &&\quad\mbox{ if } p= m_2-1-s, \\
\int_{\Omega_R}u^{r}&\rightarrow 0, \;\; \int_{\Omega_R}v^{p}\rightarrow 0  &&\quad\mbox{ if } p> m_2-1-s.
\end{aligned}
\right.
\end{equation*}
Using this fact in \eqref{s18} and letting $R\rightarrow \infty$ we deduce $\int_{\R^{N}\setminus
B_1}u^{r}=0$ which is again a contradiction.

\medskip

(ii) Assume $(u, v)$ is a positive solution of \eqref{sys2}. Similar to the above, let $R> 2$ and
$\varphi \in C_{c}^{\infty}(\R^N)$, $0\leq \varphi \leq 1$ that satisfies \eqref{RR}. By
Proposition \ref{p1} we have
\begin{equation}\label{s21}
\int_{\Omega_R}(I_\alpha*v^p)v^{q}u^{-a}\varphi\leq CR^{N-m_1-\frac{m_1-1-a}{\ell_1}N}
\Big(\int_{\Omega_R}u^{\ell_1}\Big)^{\frac{m_1-1-a}{\ell_1}} \quad\mbox{ for } \ell_1> m_1-1-a
\end{equation}
and
\begin{equation}\label{s22}
\int_{\Omega_R}(I_\beta*u^r)u^{s}v^{-b}\varphi\leq CR^{N-m_2-\frac{m_2-1-b}{\ell_2}N}
\Big(\int_{\Omega_R}v^{\ell_2}\Big)^{\frac{m_2-1-b}{\ell_2}} \quad\mbox{ for } \ell_2> m_2-1-b.
\end{equation}
Next, \eqref{s21} and \eqref{s22} yield
\begin{equation}\label{s23}
\Big(\int_{B_{4R}\setminus B_1}v^p\Big)\Big(\int_{\Omega_R}v^{q}u^{-a}\varphi \Big)\leq
CR^{2N-m_1-\alpha-\frac{m_1-1-a}{\ell_1}N} \Big(\int_{\Omega_R}u^{\ell_1}\Big)^{\frac{m_1-1-a}{\ell_1}},
\end{equation}
and
\begin{equation}\label{s24}
\Big(\int_{B_{4R}\setminus B_1}u^r\Big)\Big(\int_{\Omega_R}u^{s}v^{-b}\varphi \Big)\leq
CR^{2N-m_2-\beta-\frac{m_2-1-b}{\ell_2}N} \Big(\int_{\Omega_R}v^{\ell_2}\Big)^{\frac{m_2-1-b}{\ell_2}}.
\end{equation}
From \eqref{s4} we can find $0\leq \tau_1,\tau_2<1$ such that, letting $b= q+p\tau_2$, $a=
s+r\tau_1$ we have
\begin{equation}\label{s25}
p> m_2-1-a\geq 0 \quad\mbox{ and } r> m_1-1-b\geq 0.
\end{equation}
If $m_2-1= q$ then we take $\tau_2= 0$ (and similarly if $m_1-1= s$ we take $\tau_1= 0$). In light
of \eqref{s25}, we may take $\ell_1= r$, $\ell_2= p$ and $b= q+p\tau_2$, $a= s+r\tau_1$ in
\eqref{s23} and \eqref{s24}. Multiplying \eqref{s23} and \eqref{s24} we obtain
\begin{equation}\label{s26}
\begin{aligned}
\Big(\int_{B_{4R}\setminus B_1}u^r\Big)\Big(\int_{B_{4R}\setminus B_1}v^p\Big)
\Big(\int_{\Omega_R}u^{s}v^{-b}\varphi \Big)&\Big(\int_{\Omega_R}v^{q}u^{-a}\varphi \Big)\\
&\leq CR^{\gamma}\Big(\int_{\Omega_R}u^{r}\Big)^{\frac{m_1-1-s}{r}-\tau_1}
\Big(\int_{\Omega_R}v^{p}\Big)^{\frac{m_2-1-q}{p}-\tau_2},
\end{aligned}
\end{equation}
where
\begin{equation*}
\gamma= 4N-(m_1+m_2+\alpha+\beta)-N\Big(\frac{m_1-1-s}{r}-\tau_1\Big)-N\Big(\frac{m_2-1-q}{p}-\tau_2\Big).
\end{equation*}
Using H\"{o}lder's inequality we have
\begin{equation}\label{s28}
\begin{aligned}
\Big(\int_{\Omega_R}u^r\Big)^{\tau_1}\Big(\int_{\Omega_R}v^p\Big)^{\tau_2}
\Big(\int_{\Omega_R}u^{s}v^{-b}\varphi \Big)\Big(\int_{\Omega_R}v^{q}u^{-a}\varphi \Big)
&\geq \Big(\int_{\Omega_R}u^{\frac{r\tau_1+s-a}{2+\tau_1+\tau_2}}v^{\frac{p\tau_2+q-b}{2+\tau_1+\tau_2}}
\varphi^{\frac{2}{2+\tau_1+\tau_2}}\Big)^{2+\tau_1+\tau_2}\\
&= \Big(\int_{\Omega_R}\varphi^{\frac{2}{2+\tau_1+\tau_2}}\Big)^{2+\tau_1+\tau_2}\\
&= CR^{N(2+\tau_1+\tau_2)}.
\end{aligned}
\end{equation}
Using \eqref{s28} in \eqref{s26} we find
\begin{equation}\label{s29}
\Big(\int_{B_{4R}\setminus B_1}u^r\Big)^{1-\tau_1}\Big(\int_{B_{4R}\setminus B_1}v^p\Big)^{1-\tau_2}
\leq CR^{\tau}\Big(\int_{\Omega_R}u^{r}\Big)^{\frac{m_1-1-s}{r}-\tau_1}
\Big(\int_{\Omega_R}v^{p}\Big)^{\frac{m_2-1-q}{p}-\tau_2},
\end{equation}
where
\begin{equation*}
\tau= 2N-(m_1+m_2+\alpha+\beta)-N\Big(\frac{m_1-1-s}{r}+\frac{m_2-1-q}{p}\Big)\leq 0.
\end{equation*}
Next, we arrive again at a contradiction by considering the cases $\tau> 0$ and $\tau= 0$ using a
similar argument as in part (i).

\medskip

(iii) This follows with the same arguments as in part (ii) above.

\qed

\medskip

\noindent{\bf Proof of Theorem \ref{thm10}}
(i) Using Lemma \ref{lbas}, system \eqref{sys1} reduces to the following decoupled inequalities 
$$
\left\{
\begin{aligned}
L_\mathcal{A} u\geq |x|^{\alpha-N}u^q \\
L_\mathcal{B} v\geq |x|^{\beta-N}v^s 
\end{aligned}
\right.
\qquad\mbox{ in } \R^N\setminus B_{2}.
$$
The conclusion follows from Proposition \ref{p3}.

\medskip

\noindent (ii) Using Lemma \ref{lbas}, systems \eqref{sys2}  and \eqref{sys3} reduce to
$$ \left\{
\begin{aligned}
L_\mathcal{A} u\geq |x|^{\alpha-N}v^q \\
L_\mathcal{B} v\geq |x|^{\beta-N}u^s 
\end{aligned}
\right.
\qquad\mbox{ in } \R^N\setminus B_{2}.
$$
The conclusion follows from Proposition \ref{psys}. \qed

\medskip

\noindent{\bf Proof of Theorem \ref{thm11}.}
We shall only establish part (i), as the other two parts are similar.  We proceed as in the proof
of Theorem \ref{thm4} and we look for solutions of the form
\begin{align*}
u(r) &= \e (1+r)^{-\gamma_1} \cdot \left( 1 - \frac{k}{1+ \log (1+r)} \right), \\
v(r) &= \e (1+r)^{-\gamma_2} \cdot \left( 1 - \frac{k}{1+ \log (1+r)} \right)
\end{align*}
with $\gamma_1= \frac{N-m_1}{m_1-1}$, $\gamma_2= \frac{N-m_2}{m_2-1}$ and $\e,k>0$ sufficiently
small.  Using the exact same approach as before, one may establish the estimate
\begin{align}
L_{\mathcal{A}}u
\geq \frac{C_4 \e^{m-1} (1+r)^{-\gamma_1(m_1-1) -m_1} }{(1 + \log (1+r))^{2m_1}} =
\frac{C_4 \e^{m_1-1} (1+r)^{-N}}{(1 + \log (1+r))^{2m_1}}
\end{align}
in analogy with \eqref{est5}.  Since $\gamma_2 p = \frac{p(N-m_2)}{m_2-1} > \alpha$, one also has
\begin{equation*}
(I_\alpha \ast v^p) u^q \leq
\left\{ \begin{array}{lc}
C_5 \e^{p+q} (1+r)^{\alpha - \gamma_1 q - \gamma_2 p} &
\text{if \;}\, \gamma_2 p<N, \\
C_5 \e^{p+q} (1+r)^{\alpha -\gamma_1 q -N} (1+\log (1+r)) &\text{if \;}\,\gamma_2 p=N, \\
C_5 \e^{p+q} (1+r)^{\alpha -\gamma_1 q -N} & \text{if \;}\, \gamma_2 p>N,
\end{array} \right.
\end{equation*}
by Lemma \ref{lbas}(iii) in analogy with \eqref{est6}.  Now, the given assumptions assert that
\begin{equation*}
\gamma_1 q > \alpha, \qquad \gamma_1 q + \gamma_2 p > \alpha + N, \qquad p+q > m_1 -1.
\end{equation*}

\noindent \textbf{Case 1:} $\gamma_2 p < N$. Then one may conclude that
\begin{align*}
L_{\mathcal{A}}u
&\geq \frac{C_4 \e^{m_1-1} (1+r)^{-N}}{(1 + \log (1+r))^{2m_1}} \\
&\geq C_5 \e^{p+q} (1+r)^{\alpha - \gamma_1 q - \gamma_2 p} \geq (I_\alpha \ast v^p) u^q
\end{align*}
for all sufficiently small $\e>0$ because $\gamma_1 q + \gamma_2 p> \alpha +N$ and $p+q>m_1-1$.

\noindent \textbf{Case 2:} $\gamma_2 p \geq N$. Then one may similarly conclude that
\begin{align*}
L_{\mathcal{A}}u
&\geq \frac{C_4 \e^{m_1-1} (1+r)^{-N}}{(1 + \log (1+r))^{2m_1}} \\
&\geq C_5 \e^{p+q} (1+r)^{\alpha -\gamma_1 q -N} \cdot (1+\log (1+r))
\geq (I_\alpha \ast v^p) u^q
\end{align*}
for all sufficiently small $\e>0$ because $\gamma_1 q > \alpha$ and $p+q>m_1-1$.

In either case then, the first equation of the system \eqref{sys1} is satisfied for all small
enough $\e>0$. Using the exact same approach, one finds that the second equation holds as well.
\qed
\vspace{-0.1in}

\end{document}